\title%
[Traces of magnetic Sobolev spaces]%
{Characterization of the traces on the boundary of functions in magnetic Sobolev spaces}
\author{Hoai-Minh Nguyen}
\address[H.-M. Nguyen]{\'Ecole Polytechnique F\'ed\'erale de Lausanne (EPFL)\\
SB, CAMA, Station 8\\
CH-1015 Lausanne\\ Switzerland}
\email{hoai-minh.nguyen@epfl.ch}
\author{Jean Van Schaftingen}
\address[J. Van Schaftingen]{Universit\'e catholique de Louvain (UCLouvain)\\ 
Institut de Recherche en Math\'ematique et Physique\\
Chemin du Cyclotron 2 bte L7.01.01\\
1348 Louvain-la-Neuve\\
Belgium}
\email{Jean.VanSchaftingen@uclouvain.be}
\thanks{J. Van Schaftingen was partially supported by the Projet de Recherche (Fonds de la Recherche Scientifique--FNRS) n. T.1110.14 ``Existence and asymptotic behavior of solutions to systems of semilinear elliptic partial differential equations''. J. Van Schaftingen acknowledges the hospitality of the EPFL where a substantial part of this work was performed.}
\subjclass[2010]{
46E35 (%
26A33, %
35Q40,
78A25,
82D40%
)%
}
\keywords{Fractional magnetic Sobolev spaces; trace theory; extension theorems; interpolation of Banach spaces; gauge invariance; curvature of a \(U (1)\)--connection.}
\theoremstyle{proposition}
\newtheorem{proposition}{Proposition}[section]
\newtheorem{theorem}[proposition]{Theorem}
\newtheorem{lemma}[proposition]{Lemma}
\theoremstyle{remark}
\newtheorem{remark}[proposition]{Remark}
\numberwithin{equation}{section}
\newcommand{\Rset}{\mathbb{R}}
\newcommand{\Cset}{\mathbb{C}}
\newcommand{\Nset}{\mathbb{N}}
\newcommand{\Sset}{\mathbb{S}}
\newcommand{\dif}{\,\mathrm{d}}
\newcommand{\st}{\,;\,}
\newcommand{\abs}[1]{\lvert #1 \rvert}
\newcommand{\bigabs}[1]{\bigl\lvert #1 \bigr\rvert}
\newcommand{\Bigabs}[1]{\Bigl\lvert #1 \Bigr\rvert}
\newcommand{\seminorm}[1]{\lvert #1 \rvert}
\newcommand{\norm}[1]{\lVert #1 \rVert}
\newcommand{\defeq}{\triangleq}
\newcommand{\compose}{\,\circ\,}
\newcommand{\intpot}[2][]{\mathcal{I}^{#1}_{#2}}
\DeclareMathOperator{\supp}{supp}
\DeclareMathOperator{\inj}{inj}
\DeclareMathOperator{\trace}{Tr}
\begin{document}

\begin{abstract} 
We characterize  the trace of magnetic Sobolev spaces defined in a half-space or in a smooth bounded domain in which the magnetic field $A$ is differentiable and its exterior derivative corresponding to the magnetic field $dA$ is bounded. 
In particular, we prove that, for $d \ge 1$ and $p>1$, the trace of the magnetic Sobolev space $W^{1, p}_A(\mathbb{R}^{d+1}_+)$ is exactly $W^{1-1/p, p}_{A^{\shortparallel}}(\mathbb{R}^d)$ where $A^{\shortparallel}(x) =( A_1, \dotsc, A_d)(x, 0)$ for $x \in \mathbb{R}^d$ with the convention $A = (A_1, \dotsc, A_{d+1})$ when $A \in C^1(\overline{\mathbb{R}^{d+1}_+},  \mathbb{R}^{d+1})$. 
We also characterize fractional magnetic Sobolev spaces as interpolation spaces and give extension theorems from a half-space to the entire space.
\end{abstract}

\maketitle

\section{Introduction}

The \emph{first-order magnetic Sobolev space} \(W^{1, p}_A (\Omega)\) on a given open set \(\Omega \subset \Rset^{d+1}\) with \(d \ge 1\)  is defined, for a given exponent \(p \in [1, +\infty)\), a vector field \(A \in C^1 (\Omega, \Rset^{d+1})\), as 
\citelist{\cite{Esteban_Lions_1989}\cite{Lieb_Loss_2001}\cite{Raymond_2017}\cite{Arioli_Szulkin_2003}\cite{Kato_1972}\cite{Avron_Herbst_Simon_1978}\cite{Cingolani_Secchi_2002}\cite{Sandier_Serfaty_2007}*{\S 1.1}}
\begin{equation}
\label{eq_finite_norm}
W^{1, p}_{A} (\Omega) \defeq  \left\{ U \in W^{1, 1}_{\mathrm{loc}} (\Omega, \Cset) \st  \Vert U\Vert_{W^{1, p}_A(\Omega)}^p\defeq \int_{\Omega} \abs{U}^p +  \abs{\nabla_A U}^p < + \infty \right\},
\end{equation}
where the weak covariant gradient \(\nabla_A U\) associated with $A$ of \(U \in W^{1, 1}_{\mathrm{loc}} (\Omega, \Cset)\) is defined as 
\footnote{Two opposite conventions are in use for the sign of the second term in \eqref{eq_def_covariant_derivative}, we follow \citelist{\cite{Esteban_Lions_1989}\cite{Lieb_Loss_2001}\cite{Raymond_2017}\cite{Arioli_Szulkin_2003}}, and we have thus opposite convention to \citelist{\cite{Kato_1972}\cite{Avron_Herbst_Simon_1978}\cite{Cingolani_Secchi_2002}\cite{Sandier_Serfaty_2007}*{\S 1.1}}. 
  The presence of two opposite conventions is related to charge of the particles that are studied; both conventions are equivalent up to complex conjugation. 
}
\begin{equation}
  \label{eq_def_covariant_derivative}
\nabla_A U = \nabla U + i A U \qquad \text{ in } \Omega. 
\end{equation}

Magnetic Sobolev spaces arise naturally for \(p = 2\) and \(d = 2\) (corresponding to \(\Omega \subseteq \Rset^3\)) in \emph{quantum mechanics} in the presence of a magnetic field described through its \emph{magnetic vector potential} \(A \in C^1 (\Omega, \Rset^3)\); the function \(U : \Omega \to \Cset\) is then a wave-function and the integral in \eqref{eq_finite_norm} is the quadratic form associated to the quantum mechanical Hamiltonian of a particle in a magnetic field (see, e.g.,  \citelist{\cite{Landau_Lifschitz_1977}*{Chapter XV}\cite{Gasiorowicz}*{Chapter 16}}).
In physical models, the only observable quantities are the \emph{magnetic field } \(B = \nabla \times A \simeq d A  \in C(\Omega, \bigwedge^2 \Rset^3) \) and the \emph{probability density} \(\abs{U}^2\).  Here and in what follows, \(dA\) denotes the exterior derivative of \(A\); for this, we consider \(A\) as an element in \(C^1(\Omega, \bigwedge^1 \Rset^{d+1})\).
The prevalent role of the magnetic field and of the probability density is reflected by the \emph{gauge invariance} invariance of the model:  the invariance of the Hamiltonian quadratic form defined by the left-hand side \eqref{eq_finite_norm} under a change of variables \(A \mapsto  A + \nabla \Phi\) and \(U \mapsto e^{-i\Phi} U\), for any phase shift \(\Phi \in C^1 (\Omega, \Rset)\), see, e.g., \cite[chapter 7]{Lieb_Loss_2001}. Geometrically, the invariant quantity \(i\,dA\) is the curvature  of the associated \(U (1)\)--connection (see for example \cite{Sontz_2015}*{Chapter 11}).

\medskip

Magnetic Sobolev spaces \(W^{1, p}_A(\Omega)\) generalize \emph{classical Sobolev spaces} \(W^{1, p} (\Omega)\), in  which \(A \equiv 0\),  defined by 
\begin{equation*}
 W^{1, p} (\Omega) \defeq  
 \biggl\{ U \in L^p(\Omega) \st  \Vert U\Vert_{W^{1, p}(\Omega)}^p\defeq  \int_{\Omega} \abs{U}^p +  \abs{\nabla U}^p < + \infty \biggr\}.
\end{equation*}
For \(0 < s < 1\) and \(1 \le p < +\infty\), the \emph{fractional Sobolev (Sobolev--Slobodecki{\u\i})  space} is defined as
\begin{equation*}
W^{s, p} (\partial \Omega) 
\defeq \left\{u \in L^p(\partial \Omega, \Cset) \st \Vert u \Vert_{W^{s, p}(\partial \Omega)}^p\defeq \Vert u \Vert_{L^p(\partial \Omega)}^p +  \seminorm{u}_{W^{s, p}(\partial \Omega)}^p < + \infty \right\}, 
\end{equation*}
where the \emph{Gagliardo seminorm} \(\seminorm{u}_{W^{s, p} (\partial \Omega)}\) of the function \(u : \partial \Omega \to \Cset\) is given by 
\begin{equation*}
\seminorm{u}_{W^{s, p} (\partial \Omega)}^p
 \defeq
  \iint\limits_{\partial \Omega \times \partial \Omega}
    \frac
      {\bigabs{u (y) - u (x)}^p}
      {\abs{y - x}^{d  + sp}}
      \dif x
      \dif y. 
\end{equation*}
When the set \(\Omega\) is bounded and its boundary is of class \(C^1\), or \(\Omega = \Rset^{d+1}_+ \defeq \big\{(x, t) \in \Rset^{d} \times \Rset; t > 0  \big\}\), and when \(p>1\), the trace theory is well known since Gagliardo's pioneer work  \cite{Gagliardo_1957} (see also \citelist{\cite{Uspenskii_1961}\cite{diBenedetto_2016}*{\S 10.17--10.18 and Proposition 17.1}\cite{Mironescu_Russ_2015}}).
The trace operator \(\operatorname{Tr}\) defined 
by 
\begin{equation*}
  \begin{aligned}
    \operatorname{Tr} : C^1(\Bar{\Omega}) &\to  C^1(\partial \Omega) \\[6pt]
    U & \mapsto  U \vert_{\partial \Omega},
  \end{aligned}
\end{equation*}
satisfies  for some positive constant \(C_{p, \Omega}\), for every \(U \in C^1 (\Omega)\) the estimate,
\begin{equation*}
\norm{\operatorname{Tr} U }_{W^{1-1/p, p}(\partial \Omega)} \le C_{p, \Omega} \Vert U \Vert_{W^{1, p}(\Omega)}, 
\end{equation*}
and therefore the linear operator \(\operatorname{Tr}\) extends to a bounded linear map from the Sobolev space \(W^{1, p}(\Omega)\) into fractional Sobolev space \(W^{1-1/p, p}(\partial \Omega)\).
Conversely there exists a bounded linear operator \(\operatorname{Ext} : W^{1 - 1/p, p} (\partial \Omega) \to W^{1, p} (\Omega)\) such that for any \(u \in W^{1-1/p, p}(\partial \Omega)\), 
\begin{equation*}
\operatorname{Tr} (\operatorname{Ext} u) = u \text{ on  } \partial \Omega \quad \text{ and } \quad 
\norm{\operatorname{Ext} u }_{W^{1, p}(\Omega)} \le C'_{p, \Omega} \Vert u \Vert_{W^{1-1/p, p}(\partial \Omega)}, 
\end{equation*}
for some positive constant \(C'_{p, \Omega}\) independent of \(u\). 
In particular, the map \(\operatorname{Tr}\) is surjective. Consequently,  the image under the trace operator of the space \(W^{1, p}(\Omega)\) is exactly the space \(W^{1-1/p, p}(\partial \Omega)\).  The space of traces can also be described as the real interpolations space \((W^{1, p} (\Rset^d), L^p (\Rset^d))_{1 - 1/p, p}\) in the framework of interpolation of Banach spaces \cite{Lions_Peetre_1964}*{Théorème VI.2.1}.

\medskip The trace theory for \(W^{1, p}_A(\Omega)\) can easily be derived from the one of  \(W^{1, p}_{A}(\Omega)\) when the magnetic potential \(A\) is bounded. In fact, by the triangle inequality
\begin{equation}
  \label{eq_igooT9zeesh0siequ}
\bigabs{\Vert\nabla_A u  \Vert_{L^p(\Omega)} - \Vert \nabla u \Vert_{L^p(\Omega)}}
\le \Vert A \Vert_{L^\infty(\Omega)} \norm{u}_{L^p (\Omega)}, 
\end{equation}
it follows that  \(W^{1, p}_A(\Omega) = W^{1,p}(\Omega)\) in this case. Hence the trace  of \(W^{1, p}_A(\Omega)\) is the space \(W^{1-1/p, p}(\partial \Omega)\) as well. 
The situation becomes more delicate when \(\Omega = \Rset^{d+1}_+\) and \(A\) is not assumed to be bounded but its total derivative \(D A\) or, even more physically, its exterior derivative \(dA\) is bounded. 
This type of assumption on \(A\) appears naturally in many problems in physics for which \(A\) is linear in simple settings. Moreover, even when \(A\) is bounded, the quantitative bounds resulting  \eqref{eq_igooT9zeesh0siequ} depend on \(\norm{A}_{L^\infty (\Omega)}\) which is not gauge invariant; it would be desirable to have estimates depending rather on \(dA\). To our knowledge,  a characterization of the trace of \(W^{1, p}_A(\Rset^{d+1}_+)\) is not known under  such assumption on  \(A\). The goal of this work is to give a complete answer to this question. 
Besides its own interest concerning boundary values in problems of calculus of variations and partial differential equations, this  is closely related to classes of fractional magnetic problems motivated by relativistic magnetic quantum physical models \cite{Ichinose_2013} that have been studied recently \citelist{\cite{dAvenia_Squassina_2018}\cite{Nguyen_Pinamonti_Squassina_2018} \cite{Nguyen_Squassina_2019} \cite{Squassina_Volzone_2016}\cite{Pinamonti_Squassina_Vecchi_2017}\cite{Wang_Xiang_2016}\cite{Liang_Repovs_Zhang_2018}\cite{Binlin_Squassina_Xia_2018}\cite{Ambrosio_2018}\cite{Ambrosio_dAvenia_2018}\cite{Fiscella_Pinamonti_Vecchi_2017}}.

\medskip 
Given \(0< s< 1\), \(1 \le p < +\infty\), and \(A^\shortparallel \in C (\Rset^{d}, \Rset^d)\), we define, for any measurable function \(u: \Rset^d \to \Cset\)
\begin{equation}
  \label{eq_def_mag_Gagliardo}
  \seminorm{u}_{W^{s, p}_{A^\shortparallel} (\Rset^d, \Cset)}^p \defeq 
  \iint\limits_{\Rset^d \times \Rset^d} \frac
      {\bigabs{e^{  i\,\intpot{A^\shortparallel} (x, y) } u (y) - u (x)}^p}
      {\abs{y - x}^{d + sp}}
      \dif x
      \dif y,
\end{equation}
where the potential \(\intpot{A^\shortparallel} : \Rset^d \times \Rset^d \to \Rset\) is defined for each \(x, y \in \Rset^d\) by 
\begin{equation*}
\intpot{A^\shortparallel} (x, y)\defeq \int_0^1 A^\shortparallel \big((1 - t) x + t y \big) \cdot (y - x) \dif t .
\end{equation*}
Here and in what follows \(\cdot\) denotes the complex scalar product. For \(A = (A_1, \dotsc, A_{d+1}) \in C(\overline{\Rset^{d+1}_+}, \Rset^{d+1})\), 
we  will consider its parallel component on the boundary \(A^{\shortparallel} : \Rset^d \to \Rset^d\) defined for each \(x \in \Rset^d\) by
\begin{equation}\label{def-A-para}
A^{\shortparallel} (x)= (A_1, \dotsc, A_d)(x, 0). 
\end{equation}

\medskip 
Our  first main result is 

\begin{theorem}
\label{theorem_intro_bounded} \label{thm1}
Let \(d  \ge 1\) and  \(1< p <+\infty\).  
There exists a positive constant \( C_{d, p}\) depending only on \(d\) and \(p\) such that 
if \(A \in C^1 (\overline{\Rset^{d + 1}_+},  \Rset^{d + 1})\) and 
\(\norm{dA}_{L^\infty (\Rset^{d + 1}_+)} \le \beta\), then 
 \begin{enumerate}[(i)]
   \item
   \label{thm1-1}
   for each \(U \in C^1_c(\overline{\Rset^{d+1}_+}, \Cset)\),
   \begin{equation*}
     \seminorm{U (\cdot, 0)}_{W^{1- 1/p, p}_{A^{\shortparallel}}(\Rset^d)} + \beta^{\frac{1}{2} - \frac{1}{2p}} \Vert 
   U (\cdot, 0) \Vert_{L^p(\Rset^d)} 
 \le  C_{d, p} \Bigl( \Vert\nabla_A U  \Vert_{L^p(\Rset^{d+1}_+)} 
 +  \beta^\frac{1}{2} \Vert U \Vert_{L^p(\Rset^{d+1}_+)} \Bigr),
\end{equation*}
\item for each \(u \in C^1_{c}(\Rset^d, \Cset)\),  there exists \(U \in C^1_c(\overline{\Rset^{d+1}_+})\) depending linearly on \(u\) such that 
\(U(x, 0) = u(x)\) in \(\Rset^{d}\) and 
\begin{equation*}
  \label{thm1-2}
 \norm{\nabla_A U}_{L^p(\Rset^{d+1}_+)} 
 +  \beta^\frac{1}{2} \Vert U \Vert_{L^p(\Rset^{d+1}_+)} \le C_{d, p} \Bigl( \seminorm{u}_{W^{1- 1/p, p}_{A^{\shortparallel}}(\Rset^d)} +   \beta^{\frac{1}{2} - \frac{1}{2 p}} \Vert u \Vert_{L^p(\Rset^d)} 
 \Bigr).   
\end{equation*}
\end{enumerate}
\end{theorem}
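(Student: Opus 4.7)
The core strategy is to express the magnetic boundary difference $e^{i\mathcal{I}_{A^\shortparallel}(x,y)}U(y,0)-U(x,0)$ as a bulk line integral of $\nabla_A U$, with the gap controlled purely by the curvature $dA$ via Stokes' theorem. For each $x,y\in\Rset^d$ and any piecewise $C^1$ path $\gamma$ in $\overline{\Rset^{d+1}_+}$ joining $(x,0)$ to $(y,0)$, the covariant fundamental theorem of calculus, combined with the Stokes identity $\int_\gamma A - \mathcal{I}_{A^\shortparallel}(x,y) = \int_\Sigma dA$ for the triangular surface $\Sigma$ spanned by $\gamma$ and the boundary segment from $(x,0)$ to $(y,0)$, gives
\[
\bigl| e^{i\mathcal{I}_{A^\shortparallel}(x,y)}U(y,0)-U(x,0)\bigr| \lesssim \beta\,|\Sigma|\,|U(x,0)| + \int_\gamma |\nabla_A U|\,ds.
\]
Choosing $\gamma$ to be a V-shaped path peaking at height of order $|x-y|$ makes $|\Sigma|\lesssim|x-y|^2$ and the path length of order $|x-y|$.

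For part (i), I would average this pointwise bound over a family of V-paths whose apex varies in a ball of radius $|x-y|$ at height $|x-y|$, converting the line integral into a bulk integral of $|\nabla_A U|$ against a Riesz-type kernel supported in a cone above the segment. Raising to the $p$-th power, multiplying by $|x-y|^{-d-p+1}$ and integrating over $\{|x-y|<R\}$ produces, via Fubini, the bulk norm $\Vert\nabla_A U\Vert_{L^p}^p$ plus an error $\beta^p R^{p+1}\Vert U(\cdot,0)\Vert_{L^p}^p$. On the complement $\{|x-y|\ge R\}$ the trivial bound $|e^{i\mathcal{I}_{A^\shortparallel}(x,y)}U(y,0)-U(x,0)|\le|U(y,0)|+|U(x,0)|$ contributes $R^{1-p}\Vert U(\cdot,0)\Vert_{L^p}^p$; the choice $R\sim\beta^{-1/2}$ balances both errors into $\beta^{(p-1)/2}\Vert U(\cdot,0)\Vert_{L^p}^p$, matching the expected coefficient. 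The additional trace control $\beta^{(p-1)/(2p)}\Vert U(\cdot,0)\Vert_{L^p}\lesssim \Vert\nabla_A U\Vert_{L^p}+\beta^{1/2}\Vert U\Vert_{L^p}$ then follows from the classical identity $|U(x,0)|^p\le -\int_0^\infty \partial_t(|U|^p\eta(t))\,dt$ applied with a cutoff $\eta$ at the same scale $\beta^{-1/2}$, combined with the diamagnetic inequality $|\nabla|U||\le|\nabla_A U|$ and an AM--GM step.

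For part (ii), I would define a gauge-covariant extension
\[
U(x,t)=\eta(t)\int_{\Rset^d}\rho_t(x-y)\,e^{-i\mathcal{I}_A((y,0),(x,t))}\,u(y)\,dy,
\]
with $\rho_t$ a smooth radial mollifier of scale $t$ and $\eta$ a cutoff at scale $\beta^{-1/2}$, so that $U(\cdot,0)=u$ automatically. A direct computation using the identity $\nabla_{(x,t)}\mathcal{I}_A((y,0),(x,t))-A(x,t)=O(\beta(|x-y|+t))$, together with the subtraction trick $\int\nabla\rho_t=0$ and a further Stokes application on the triangle with vertices $(y,0),(x,0),(x,t)$ (of area $\lesssim t|x-y|$), yields
\[
|\nabla_A U(x,t)|\lesssim t^{-1}\int_{\Rset^d}\rho_t(x-y)\,\bigl|e^{i\mathcal{I}_{A^\shortparallel}(x,y)}u(y)-u(x)\bigr|\,dy + \beta t\int_{\Rset^d}\rho_t(x-y)\,|u(y)|\,dy.
\]
Taking $p$-th powers, integrating in $(x,t)\in\Rset^d\times(0,\beta^{-1/2})$ and applying Fubini converts the first piece into the magnetic Gagliardo seminorm $\seminorm{u}_{W^{1-1/p,p}_{A^\shortparallel}(\Rset^d)}^p$ (since $\int_{|x-y|/C}^\infty t^{-p-d}\,dt\sim|x-y|^{1-p-d}$ matches the Gagliardo weight exactly), while the second term, together with the $\beta^{p/2}\Vert U\Vert_{L^p}^p$ bound and the cutoff error from $\eta'$, yields $\beta^{(p-1)/2}\Vert u\Vert_{L^p(\Rset^d)}^p$. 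The main obstacle is the careful bookkeeping of all curvature errors and the coordination of a single cutoff scale $\beta^{-1/2}$ across the asymmetric $\beta$-weights on both sides of the inequality; the fact that all surviving quantities depend only on $dA$ and never on $A$ itself (gauge invariance) serves as a useful sanity check throughout.
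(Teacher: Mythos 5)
Your argument is correct and mirrors the paper's strategy essentially exactly: for part (i), a V-shaped path into the bulk together with Stokes' theorem to control the phase discrepancy between the boundary potential $\intpot{A^\shortparallel}(x,y)$ and the line integral of $A$ along the path, and for part (ii), the gauge-covariant mollification $U(x,t)=\theta(t)\int\varphi_t(x-y)\,e^{i\intpot{A}((x,t),y)}u(y)\,dy$ with a cutoff $\theta$ at scale $\beta^{-1/2}$, using $\int\nabla\varphi=0$ to subtract and isolate the magnetic Gagliardo difference. The one tactical deviation is in part (i): you attribute the curvature error to the boundary value $\abs{U(x,0)}$, which forces a near/far split at $\abs{x-y}\sim\beta^{-1/2}$ and makes the Gagliardo seminorm estimate depend on your separately-proven $L^p$ trace bound, whereas the paper's triangle inequality (with the auxiliary apex $Z=(\tfrac{x+y}{2},\abs{y-x})$) attributes the error to the \emph{bulk} value $\abs{U(Z)}$ and uses $\min(1,c\beta r^2)\le c\beta^{1/2}r$, so the curvature contribution lands directly as $\beta^{p/2}\Vert U\Vert_{L^p(\Rset^{d+1}_+)}^p$ without any splitting and keeps the seminorm and $L^p$-trace estimates fully independent.
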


The conclusions of \cref{theorem_intro_bounded} are \emph{gauge-invariant}: all the functional norms are gauge-invariant and the constants only depend through \(\beta\) which is an upper bound of  the norm \(\norm{dA}_{L^\infty (\Rset^{d + 1}_+)}\) of the magnetic field on the half-space \(\Rset^{d + 1}_+\).

As a consequence of \cref{theorem_intro_bounded}, by a standard density argument (see Section~\ref{sect-characterization}), we obtain the following 
characterization of the trace of the space \(W^{1, p}_A(\Rset^{d+1}_+)\): 

\begin{theorem}
  \label{thm-2-intro} 
  Let \(d  \ge 1\) and  \(1 < p < +\infty\). Assume that 
  \(A \in C^1(\overline{\Rset^{d + 1}_+}, \bigwedge^1 \Rset^{d + 1})\) and that \(d A \in L^\infty (\Rset^{d + 1}_+, \bigwedge^2 \Rset^{d + 1})\).
  The trace mapping 
  \begin{equation*}
    \begin{aligned}
      \operatorname{Tr}: W^{1, p}_A (\Rset^{d+1}_+, \Cset) &\to  W^{1-1/p, p}_{A^{\shortparallel}}(\Rset^d, \Cset)\\[6pt]
      U(x, x_{d+1}) & \mapsto  U (x, 0)
    \end{aligned}
  \end{equation*}
  is linear and continuous. There exists a linear continuous mapping 
  \begin{equation*}
  \operatorname{Ext}: W^{1-1/p, p}_{A^{\shortparallel}}(\Rset^d, \Cset) \to W^{1, p}_A(\Rset^{d+1}_+, \Cset)
  \end{equation*}
  such that  \(\operatorname{Tr} \compose \operatorname{Ext}_{\Rset^{d + 1}_+}\) is the identity on \(W^{1-1/p, p}_{A^{\shortparallel}}(\Rset^d)\). Moreover, the corresponding estimates of \cref{thm1} with \(u = \operatorname{Tr} U\) and \(U = \operatorname{Ext}u\) are valid.
\end{theorem}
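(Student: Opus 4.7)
The plan is to derive \cref{thm-2-intro} from \cref{thm1} by a density and completion argument, in the spirit of the classical Gagliardo trace theory. The scheme has three ingredients: establish the density of $C^1_c(\overline{\Rset^{d+1}_+})$ in $W^{1, p}_A(\Rset^{d+1}_+)$; establish the density of $C^1_c(\Rset^d)$ in $W^{1-1/p, p}_{A^{\shortparallel}}(\Rset^d)$; and invoke the uniform estimates of \cref{thm1} to pass to the limit, recovering $\operatorname{Tr}$ as a limit and extending the linear operator of \cref{thm1}~(ii) by continuity.

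For the first density, note that any $U \in W^{1, p}_A(\Rset^{d+1}_+)$ satisfies $\nabla U = \nabla_A U - iAU \in L^p_{\mathrm{loc}}$ since $A \in C^1$ is locally bounded, hence $U \in W^{1, p}_{\mathrm{loc}}$. Truncating by $\chi_R(x, t) = \chi(x/R, t/R)$, with $\chi \in C^\infty_c$ equal to $1$ near the origin, gives
\begin{equation*}
\nabla_A(\chi_R U) = \chi_R \nabla_A U + U \nabla \chi_R,
\end{equation*}
where $\|U \nabla \chi_R\|_{L^p} \le R^{-1}\|\nabla \chi\|_{L^\infty}\|U\|_{L^p} \to 0$ and $\chi_R \nabla_A U \to \nabla_A U$ in $L^p$ by dominated convergence. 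A small vertical translation followed by mollification by a standard kernel then smooths the truncated function to an element of $C^1_c(\overline{\Rset^{d+1}_+})$; writing $V = \chi_R U$, the residual magnetic contribution $iA(V \ast \rho_\varepsilon - V)$ is controlled by the boundedness of $A$ on the fixed compact support.

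The second density is more delicate because of the nonlocal nature of the magnetic Gagliardo seminorm. For the truncation step, the key algebraic identity
\begin{multline*}
e^{i\intpot{A^\shortparallel}(x,y)}\bigl(\chi_R(y)-1\bigr) u(y) - \bigl(\chi_R(x)-1\bigr) u(x) \\
= \bigl(\chi_R(x)-1\bigr)\bigl(e^{i\intpot{A^\shortparallel}(x,y)} u(y) - u(x)\bigr) + \bigl(\chi_R(y)-\chi_R(x)\bigr) e^{i\intpot{A^\shortparallel}(x,y)} u(y)
\end{multline*}
splits the seminorm of $\chi_R u - u$ into a term that vanishes as $R \to \infty$ by dominated convergence in $L^p\bigl(\Rset^{2d}; |y-x|^{-d - sp}\dif x \dif y\bigr)$, and a commutator term dominated, via $|\chi_R(y)-\chi_R(x)| \le \min\bigl(2, R^{-1}\|\nabla\chi\|_{L^\infty}|y-x|\bigr)$, by a quantity of order $R^{-(1-1/p)}\|u\|_{L^p}$. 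A subsequent mollification, exploiting the pointwise continuity of $\intpot{A^\shortparallel}$, produces the desired $C^1_c(\Rset^d)$ approximations.

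Once both densities are available, the rest is formal. Given $U \in W^{1, p}_A(\Rset^{d+1}_+)$ and approximating $U_n \in C^1_c(\overline{\Rset^{d+1}_+})$, applying \cref{thm1}~(i) to $U_n - U_m$ shows that $(U_n(\cdot, 0))_n$ is Cauchy in $W^{1-1/p, p}_{A^\shortparallel}(\Rset^d)$; its limit, independent of the approximating sequence, defines $\operatorname{Tr} U$ and satisfies the quantitative bound by passage to the limit. Analogously, the linear extension on $C^1_c(\Rset^d)$ provided by \cref{thm1}~(ii) extends by continuity to a bounded linear operator $\operatorname{Ext} : W^{1-1/p, p}_{A^\shortparallel}(\Rset^d) \to W^{1, p}_A(\Rset^{d+1}_+)$, and the identity $\operatorname{Tr} \compose \operatorname{Ext} = \mathrm{id}$ follows from its validity on the dense subspace. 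I expect the density in the magnetic fractional space to be the main technical obstacle, as the phase factor $e^{i\intpot{A^\shortparallel}}$ couples distant points and must be tracked carefully through both cutoff and mollification.
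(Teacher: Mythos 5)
Your proposal is correct and follows essentially the same route as the paper: Section~\ref{sect-characterization} establishes exactly the two density results you outline (Lemmas~\ref{lem-B-1} and~\ref{lem-B-2}), via the same truncation-plus-smoothing scheme and a closely related commutator identity for the fractional magnetic seminorm (the paper uses the symmetrized split with \(\frac{\chi_\lambda(x)+\chi_\lambda(y)}{2}\), yours with \(\chi_R(x)-1\), but they are equivalent), and then passes to the limit using the uniform estimates of \cref{thm1} exactly as you describe. The only difference worth noting is that the paper proves the density lemmas directly in the weighted generality \(W^{1,p}_{A,\gamma}\) (invoking Muckenhoupt \(A_p\) theory for the smoothing step) so as to also cover \cref{theorem_trace_halfplane_weight}, of which \cref{thm-2-intro} is the special case \(\gamma = 0\), \(s = 1-1/p\).
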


In the case where the magnetic field \(dA\) is constant, we obtain the following improvements:

\begin{theorem} \label{thm-3-intro} 
  Let \(d  \ge 1\) and  \(1< p <+\infty\).  Assume that  \(A \in C^1 (\overline{\Rset^{d + 1}_+},  \Rset^{d + 1})\) and that  \(dA\) is constant. We have, with \(u = \operatorname{Tr} U\) and \(U = \operatorname{Ext}u\), 
      \begin{equation}\label{thm3-p1}
      \seminorm{u}_{W^{1- 1/p, p}_{A^{\shortparallel}}(\Rset^d)} 
      + \norm{d A}^{\frac{1}{2} - \frac{1}{2p}} 
      \norm{u }_{L^p(\Rset^d)} 
      \le  C_{d, p} \Vert\nabla_A U  \Vert_{L^p(\Rset^{d+1}_+)} 
    \end{equation}
    and 
    \begin{equation}
      \label{thm3-p2}
      \norm{\nabla_A U}_{L^p(\Rset^{d+1}_+)} 
      +  \norm{dA}^\frac{1}{2} \Vert U \Vert_{L^p(\Rset^{d+1}_+)} \le C_{d, p} \seminorm{u}_{W^{1- 1/p, p}_{A^{\shortparallel}}(\Rset^d)} .   
    \end{equation}
  \end{theorem}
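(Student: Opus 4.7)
Both inequalities in Theorem~\ref{thm-3-intro} differ from those of Theorem~\ref{thm1} by the absence of the lower-order $L^p$ remainder on the right-hand side. My plan is to derive Theorem~\ref{thm-3-intro} from Theorem~\ref{thm1} via a scaling reduction combined with a magnetic Poincaré-type inequality. Since $dA$ is constant I first choose the Poincaré gauge $A(X) = \tfrac{1}{2}(dA) \cdot X$ (allowed by the gauge invariance of every quantity in \eqref{thm3-p1}--\eqref{thm3-p2}), so that $A$ is linear in $X$. Under the dilation $X \mapsto \lambda X$, the rescaled system $(U_\lambda, A_\lambda, u_\lambda) = (U(\lambda\cdot),\, \lambda A(\lambda\cdot),\, u(\lambda\cdot))$ satisfies $\norm{dA_\lambda} = \lambda^2 \norm{dA}$, and a direct calculation shows that each of the six quantities entering \eqref{thm3-p1}--\eqref{thm3-p2} scales with the common factor $\lambda^{1 - (d+1)/p}$. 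Both inequalities are therefore scale invariant, so we may reduce to the normalized case $\norm{dA} = 1$ (the case $dA \equiv 0$ reducing to the classical homogeneous trace and extension theorems after a gauge transformation killing $A$).

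With $\norm{dA} = 1$, Theorem~\ref{thm1} yields the trace estimate \eqref{thm3-p1} except with an additional term $\norm{U}_{L^p(\Rset^{d+1}_+)}$ on the right-hand side, and, for the specific extension $U = \operatorname{Ext} u$ of Theorem~\ref{thm-2-intro}, the extension estimate \eqref{thm3-p2} except with an additional term $\norm{u}_{L^p(\Rset^d)}$ on the right-hand side. To eliminate these terms I would establish two magnetic Poincaré-type inequalities valid in the normalized regime:
\begin{equation*}
  \norm{U}_{L^p(\Rset^{d+1}_+)} \le C \norm{\nabla_A U}_{L^p(\Rset^{d+1}_+)}
  \quad\text{and}\quad
  \norm{u}_{L^p(\Rset^d)} \le C \seminorm{u}_{W^{1-1/p,p}_{A^\shortparallel}(\Rset^d)}.
\end{equation*}
Substitution of these into the Theorem~\ref{thm1} bounds then produces the two conclusions of Theorem~\ref{thm-3-intro}. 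The underlying intuition is the magnetic confinement provided by a constant non-zero field: in the linear gauge, $\abs{A(X)}$ grows linearly, so the term $iAU$ in $\nabla_A U$ forces any function with $\nabla_A U \in L^p$ to decay at infinity; an analogous mechanism, involving the linear growth of the phase $\intpot{A^\shortparallel}$, controls $\norm{u}_{L^p(\Rset^d)}$ via the magnetic Gagliardo seminorm.

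The principal obstacle is the proof of these two magnetic Poincaré-type inequalities. In the $L^2$ setting with a non-degenerate constant field the bulk inequality is a form of the Landau-level spectral-gap bound $-\Delta_A \ge \norm{dA}$; for general $p \in (1, +\infty)$ one must substitute a different argument, for instance based on heat-kernel or semigroup bounds for $e^{t \Delta_A}$ combined with complex interpolation, or on the diamagnetic inequality $\abs{\nabla \abs{U}} \le \abs{\nabla_A U}$ combined with a weighted $L^p$ Hardy-type estimate on $\abs{X}$. If $dA$ has a non-trivial kernel, the half-space boundary breaks translation invariance transverse to $\partial \Rset^{d+1}_+$, and a change of variables along and transverse to $\ker dA$ reduces the analysis to the non-degenerate case in the directions where the magnetic confinement is effective; the analogous degenerate issue for the boundary inequality is handled through the integral structure of the phase $\intpot{A^\shortparallel}$ entering the magnetic Gagliardo seminorm. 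Once the two Poincaré-type inequalities are in place, substitution into the Theorem~\ref{thm1} estimates completes the proof.
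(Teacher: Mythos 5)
Your plan correctly identifies the role of a fractional magnetic Poincar\'e inequality on the boundary --- this is precisely the paper's Lemma~\ref{lem_Poincare}, which controls $\norm{dA^\shortparallel}^{\frac{s p}{2}}\norm{u}_{L^p(\Rset^d)}^p$ by the Gagliardo seminorm $\seminorm{u}_{W^{s,p}_{A^\shortparallel}}^p$ --- and the scaling reduction to $\norm{dA}=1$ is a legitimate normalisation not used but certainly compatible with the paper. However, your route to \eqref{thm3-p1} diverges from the paper's and contains a genuine gap.

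The gap is the \emph{bulk} magnetic Poincar\'e inequality $\norm{U}_{L^p(\Rset^{d+1}_+)} \le C\norm{\nabla_A U}_{L^p(\Rset^{d+1}_+)}$ on the half-space, with constant controlled only by $\norm{dA}$. You acknowledge it as the ``principal obstacle'' but only sketch possible strategies (Landau-level bound, heat kernels, diamagnetic plus Hardy). The Landau-level bound gives $-\Delta_A \ge \norm{dA}$ for $p=2$ on the \emph{full} space; on the half-space the relevant constant is the de Gennes constant $\Theta_0 < 1$, and the transfer to $p\ne 2$ and to a possibly degenerate constant field is genuinely nontrivial. Nothing in the paper establishes this inequality, and the paper's argument for \eqref{thm3-p1} deliberately circumvents it: Lemma~\ref{lemma_integraltoboundary} produces, for each $\lambda>0$, a trace estimate for the \emph{modified} phase $\intpot{A^\shortparallel}(x,y) + \lambda\,|y-x|\,dA[y-x,e_{d+1}]$, whose right-hand side has \emph{no} $\norm{U}_{L^p(\Rset^{d+1}_+)}$ term; taking $\lambda=1$ and $\lambda=2$ and subtracting isolates the purely boundary quantity $\iint\abs{e^{i|y-x|dA[y-x,e_{d+1}]} -1}^p\abs{u(y)}^p/\abs{y-x}^{d+sp}$, which captures the mixed (tangential--normal) part of $dA$, while the purely tangential part $dA^\shortparallel$ is controlled by Lemma~\ref{lem_Poincare} applied directly to $u=U(\cdot,0)$. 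The whole argument stays on the boundary; no Poincar\'e inequality is ever needed in the bulk.

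There is a second, smaller issue. Your two Poincar\'e inequalities, as stated with constant $C$ after normalising $\norm{dA}=1$, implicitly require $\norm{dA^\shortparallel}\gtrsim\norm{dA}$, since Lemma~\ref{lem_Poincare} only involves $\norm{dA^\shortparallel}$. When the tangential component $dA^\shortparallel$ is much smaller than $dA$ (for example $dA = \beta\,dx_1\wedge dt$ with $d=1$, so $dA^\shortparallel = 0$), the boundary Poincar\'e constant degenerates, and your chain of inequalities does not close. The paper compensates for this by the Lemma~\ref{lemma_integraltoboundary} mechanism described above, which supplies the contribution of the transverse field directly. Your plan as written does not have an analogue of this step.
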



We later show that the space $W^{s, p}_{A^{\shortparallel}}(\Rset^d)$ with $0 < s < 1$ and $p\ge 1$ is the trace space of the space  $W^{1, p}_{A, 1- (1-s)p }(\Rset^d)$ whose definition is given in \eqref{def-WA-gamma}; moreover, the corresponding estimates hold (see Theorems~\ref{theorem_trace_halfplane_weight} and \ref{thm-3}).  

We establish similar estimates  for a smooth bounded domain  $\Omega \subset \Rset^{d + 1}$ and a magnetic potential \(A \in C^1 (\Bar{\Omega}, \Rset^{d+1})\). 
It is worth noting that the trace theory  in this setting is known as in the case $A \equiv 0$. Nevertheless, our estimates  (\cref{proposition_Bdd_dA_trace_domain} and \cref{proposition_Bdd_dA_extension_domain}) are gauge invariant, and sharpen estimates in the semi-classical limit
(\cref{proposition_semiclassical}). 

As a consequence of the trace theorems, we derive  a characterization of the space \(W^{s, p}_{A^\shortparallel} (\Rset^d, \Cset)\) as an interpolation space
(\cref{theorem_interpolation}). 
We also observe that the characterization of traces is also independent on the side of the hyperplane from which the trace is taken or to which the extension is made (this fact is not too trivial, see Remark~\ref{rem-extension}). Consequently,  the trace theorem provides an extension theorem from a half-space to the whole space (\cref{theorem_extension_hetero}). 

In an appendix, we show that under the assumption that some derivative of \(A\) is bounded, our magnetic fractional spaces have equivalent norms to other families of fractional spaces defined in the literature (\cref{proposition_Equivalent_norms}).

We now describe briefly the idea of the proof of the trace theory.  The proof of the trace estimates and of the construction of the extension is based on a standard strategy  that goes back to Gagliardo's seminal work \cite{Gagliardo_1957}. Concerning Theorem~\ref{theorem_intro_bounded} and its variants (Propositions~\ref{proposition_Bdd_dA_trace} and \ref{proposition_Bdd_dA_extension}), the key point of  our analysis lies on the observation that $A^{\shortparallel}$ defined in  $\Rset^d$ by \eqref{def-A-para} encodes the information the trace space of $W^{1, p}_A(\Rset^{d+1}_+)$ and an appropriate extension formula given in \eqref{eq_def_extension_Lip}. The proof of the trace estimates also involves Stokes theorems (Lemma~\ref{lem-Magnetic}) and a simple useful observation given in Lemma~\ref{lem-E-2}. Concerning Theorem~\ref{thm-3-intro} and its variants (Theorem~\ref{thm-3}), the new part is the trace estimates (see, e.g., \eqref{thm3-p1}). To this end, the Stoke formula and an averaging argument are used while taking into account the fact $dA$ is constant.  The proof for a domain $\Omega$ uses the results in the half space via local charts.


\section{Trace estimate for bounded magnetic field}

In this section, we prove the following trace estimate on the boundary of the half-space with a bounded magnetic field, which covers \eqref{thm1-1} in \cref{thm1}.

\begin{proposition}
\label{proposition_Bdd_dA_trace}
Let \(d  \ge 1\), \(0 < s<1\),  and \(1 \le p < +\infty\). 
There exists a positive constant \(C_{d, s, p}\) depending only on \(d\), \(s\) and \(p\) such that 
if  \(A \in C^1 (\overline{\Rset^{d + 1}_+},  \Rset^{d + 1})\), \(\norm{d A}_{L^\infty (\Rset^{d + 1}_+)} \le \beta\) and if \(U \in C^1_c(\overline{\Rset^{d+1}_+}, \Cset)\), then
\begin{equation}
  \label{pro-P1-state1}
|U(\cdot, 0)|_{W^{s, p}_{A^{\shortparallel}}(\Rset^d)}^p \le
C_{d, s, p}
          \iint\limits_{\Rset^{d + 1}_+}
            \frac{\bigabs{\nabla_A U (z, t)}^p + \beta^\frac{p}{2} \abs{U (z, t)}^p}{t^{1 - (1 - s)p}} \dif z \dif t 
\end{equation}
and 
\begin{equation}\label{pro-P1-state2}
\Vert U(\cdot, 0) \Vert_{L^p(\Rset^d)}^p \le 
C_{d, s, p}
  \Biggl(\iint\limits_{\Rset^{d + 1}_+}
  \frac{\bigabs{\nabla_A U (z, t)}^p }{t^{1 - (1 - s)p}} \dif t \dif z\Biggr)^{1 - s}
  \Biggl(\iint\limits_{\Rset^{d + 1}_+}
  \frac{\bigabs{U (z, t)}^p }{t^{1 - (1 - s) p}} \dif t \dif z\Biggr)^{s}.  
\end{equation}
\end{proposition}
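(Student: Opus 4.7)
My approach follows Gagliardo's trace scheme, adapted to the gauge structure of $\nabla_A$. The first tool is the gauge-covariant fundamental theorem of calculus: for a piecewise $C^1$ path $\gamma:[0,1] \to \overline{\Rset^{d+1}_+}$, writing $\intpot{A}(\gamma) \defeq \int_0^1 A(\gamma(\sigma))\cdot\gamma'(\sigma)\,\dif\sigma$, differentiating $\tau \mapsto e^{i\int_0^\tau A(\gamma)\cdot\gamma'} U(\gamma(\tau))$ yields
\[
\bigabs{e^{i\intpot{A}(\gamma)} U(\gamma(1)) - U(\gamma(0))} \le \int_\gamma \abs{\nabla_A U}\,\dif\ell.
\]
The second tool is Stokes' theorem (the paper's Lemma~\ref{lem-Magnetic}): for the broken path $\gamma_{xyzt}:(x,0)\to(z,t)\to(y,0)$ and the triangle $T_{xyzt}$ with these three vertices,
\[
\bigabs{\intpot{A}(\gamma_{xyzt}) - \intpot{A^\shortparallel}(x,y)} \le \beta\,\abs{T_{xyzt}} \le \tfrac{1}{2}\beta\abs{y-x}\sqrt{\Bigabs{z - \tfrac{x+y}{2}}^2 + t^2}.
\]

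I would prove \eqref{pro-P1-state2} first, as it is simpler and will be invoked inside the proof of \eqref{pro-P1-state1}. Applying the first tool to the vertical segment $\sigma \mapsto (x, \sigma t)$ gives $\abs{u(x)} \le \abs{U(x, t)} + \int_0^t \abs{\nabla_A U(x, r)}\,\dif r$. Raising to the $p$-th power and using weighted H\"older with weight $r^{(1-s)p - 1}$ on the integral,
\[
\abs{u(x)}^p \le 2^{p-1}\abs{U(x,t)}^p + C\,t^{sp}\int_0^t \abs{\nabla_A U(x, r)}^p r^{(1-s)p - 1}\,\dif r.
\]
Multiplying by $t^{(1-s)p - 1}$, integrating over $t \in (0, \rho)$ and dividing by $\rho^{(1-s)p}$ produces, for every $\rho > 0$,
\[
\abs{u(x)}^p \le C \rho^{-(1-s)p}\,I_1(x) + C\rho^{sp}\,I_2(x),
\]
where $I_1(x)$ and $I_2(x)$ denote the weighted $t$-integrals of $\abs{U(x,\cdot)}^p$ and $\abs{\nabla_A U(x,\cdot)}^p$ against $t^{(1-s)p - 1}$ on $(0,\infty)$. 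Optimizing over $\rho$ (the minimum of the two-term bound is attained at $\rho^p \sim I_1/I_2$) yields the pointwise geometric-mean bound $\abs{u(x)}^p \le C\, I_1(x)^s I_2(x)^{1-s}$, and a final H\"older in $x$ with exponents $1/s$ and $1/(1-s)$ gives precisely \eqref{pro-P1-state2}.

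For \eqref{pro-P1-state1}, combining the two tools above with $\bigabs{e^{i\xi} - e^{i\eta}} \le \abs{\xi - \eta}$ gives, for each $(z, t) \in \Rset^{d+1}_+$, the pointwise bound
\[
\bigabs{e^{i\intpot{A^\shortparallel}(x,y)} u(y) - u(x)} \le \tfrac{1}{2}\beta\abs{y-x}\sqrt{\Bigabs{z - \tfrac{x+y}{2}}^2 + t^2}\,\abs{u(y)} + \int_{\gamma_{xyzt}} \abs{\nabla_A U}\,\dif\ell,
\]
which plays the role of the paper's Lemma~\ref{lem-E-2}. I would average the $p$-th power of this inequality over $(z, t)$ in the cylinder $B_{\abs{y-x}/2}(\tfrac{x+y}{2}) \times (0, \abs{y-x}/2)$ and integrate the result against $\abs{y-x}^{-(d+sp)}\,\dif x\,\dif y$. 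For the covariant-derivative term, H\"older on the path gives $(\int_{\gamma_{xyzt}} \abs{\nabla_A U})^p \le C\abs{y-x}^{p-1}\int_{\gamma_{xyzt}} \abs{\nabla_A U}^p\,\dif\ell$; combined with Fubini and a change of variables along the two segments of $\gamma_{xyzt}$, this converts the contribution into $C\iint \abs{\nabla_A U(z,t)}^p\, t^{(1-s)p-1}\,\dif z\,\dif t$, which is the classical Gagliardo scaling. For the Stokes term I would use the sharper $\bigabs{e^{i\xi} - e^{i\eta}} \le \min(\abs{\xi - \eta}, 2)$ and split the $\abs{y-x}$-integration at the magnetic length scale $\beta^{-1/2}$; a direct computation of both regimes bounds this contribution by $C\beta^{sp/2}\Vert u\Vert_{L^p(\Rset^d)}^p$. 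Invoking the already-proved \eqref{pro-P1-state2} together with Young's inequality $(\beta^{p/2} I_1)^s I_2^{1-s} \le s\beta^{p/2} I_1 + (1-s)\,I_2$ then absorbs this expression into the target right-hand side of \eqref{pro-P1-state1}.

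The main obstacle is this last absorption step: the factor $\beta^{p/2}$ appearing on the right-hand side of \eqref{pro-P1-state1} is not produced directly by the Stokes bound, which naively yields only $\beta^p$; it emerges from the scale-split at $\abs{y-x} \sim \beta^{-1/2}$ combined with the sharp $L^p$ trace estimate \eqref{pro-P1-state2}. The remaining Fubini/change-of-variables bookkeeping producing the weight $t^{(1-s)p - 1}$ from the averaging over the triangle parameters is standard in Gagliardo-type arguments.
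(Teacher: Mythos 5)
Your treatment of \eqref{pro-P1-state2} is correct and is essentially the paper's argument, carried out pointwise in $x$ and then integrated, rather than in $L^p(\dif x)$ with a global choice of $\lambda$; both variants give the same estimate.

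Your treatment of \eqref{pro-P1-state1}, however, has a gap in the covariant-derivative term. You claim that averaging the unweighted path-H\"older bound $(\int_{\gamma_{xyzt}}|\nabla_A U|)^p \le C\abs{y-x}^{p-1}\int_{\gamma_{xyzt}}|\nabla_A U|^p\,\dif\ell$ over the cylinder $C_{xy} = B_{\abs{y-x}/2}(\tfrac{x+y}{2}) \times (0, \abs{y-x}/2)$ and integrating against $\abs{y-x}^{-(d+sp)}\dif x\,\dif y$ ``converts the contribution into $C\iint |\nabla_A U(z,t)|^p\,t^{(1-s)p-1}\,\dif z\,\dif t$.'' It does not. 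Parametrize the segment $(x,0) \to (z,t)$ by $\sigma \in [0,1]$, set $w = (1-\sigma)x + \sigma z$, $\tau = \sigma t$ (Jacobian $\sigma^{d+1}$), and then, for fixed $(w,\tau,\sigma)$, integrate over $(x,y)$; after absorbing $\abs{C_{xy}}\sim\abs{y-x}^{d+1}$, the ball constraint on $x$ (measure $\sim(\sigma\abs{y-x})^d$), and passing to polar coordinates in $h = y-x$, the $(x,y)$-integral reduces to a constant times $\sigma^d\int_{2\tau/\sigma}^\infty r^{(1-s)p - 2}\,\dif r$, which \emph{diverges at $r = \infty$} whenever $(1-s)p \ge 1$, i.e.\ $s \le 1 - 1/p$. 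That range includes the central case $s = 1-1/p$ of \cref{thm1}. The divergence is structural: the factor $\abs{y-x}^{p-1}$ from the unweighted H\"older grows too fast relative to what the $\abs{y-x}^{d+1}$ from the cylinder average can cancel.

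The paper's mechanism for producing the weight $t^{(1-s)p-1}$ is different and is the missing ingredient: it does not average but fixes the single apex $Z = (\tfrac{x+y}{2}, \abs{y-x})$ and applies the \emph{weighted} H\"older inequality $\bigl(\int_0^1 |f(t)|\,\dif t\bigr)^p \le C\int_0^1 t^{(1-s)(p-1)}|f(t)|^p\,\dif t$ to the path parameter, then changes variables $\eta = (1-\tfrac{t}{2})x + \tfrac{t}{2}y$, $\xi = t(x-y)$ with Jacobian $t^d$ \emph{for each fixed $t$} (see \eqref{eq_looPhaWoo5}--\eqref{pro-P1-p1}). The weight $t^{(1-s)(p-1)}$ on the path parameter, together with the Jacobian, is exactly what produces the interior weight $t^{(1-s)p-1}$ and keeps the remaining $t$-integral $\int_0^1 t^{-(1-s)}\,\dif t$ finite for all $s \in (0,1)$.

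The remaining parts of your argument are sound and in fact give a valid alternative to the paper's handling of the curvature term: you bound the Stokes error by $\min(2, C\beta\abs{y-x}^2)\abs{u(y)}$ (evaluating at the boundary point $y$ rather than at $Z$), split the dyadic scales at $\abs{y-x}\sim\beta^{-1/2}$ to get $C\beta^{sp/2}\norm{u}_{L^p}^p$, and then absorb via \eqref{pro-P1-state2} and Young's inequality. The paper instead reads off $\min(1, \tfrac{1}{2}\beta\abs{y-x}^2) \le C\beta^{1/2}\abs{y-x}$ directly in \cref{lem-E-2} with the third vertex at $Z$, which produces the $\beta^{p/2}\iint |U|^p t^{(1-s)p-1}$ term in one step, without invoking \eqref{pro-P1-state2} inside the proof of \eqref{pro-P1-state1}.
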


As a consequence of \eqref{pro-P1-state2}, we have
\begin{equation}
  \beta^\frac{sp}{2}  \Vert U(\cdot, 0) \Vert_{L^p(\Rset^d)}^p \le 
  C'
  \iint\limits_{\Rset^{d + 1}_+}
  \frac{\bigabs{\nabla_A U (z, t)}^p + \beta^\frac{p}{2} \abs{U (z, t)}^p}{t^{1 - (1 - s)p}} \dif z \dif t .
\end{equation}


We first present several lemmas used in the proof of \cref{proposition_Bdd_dA_trace},
before going to the proof of \cref{proposition_Bdd_dA_trace} at the end of the section.

We define for each \(X, Y \in \overline{\Rset^{d+1}_+}\),  the homotopy operator
\begin{equation}
 \label{eq_def_intpot}
  \intpot{A} (X, Y)
 \defeq 
  \int_0^1 
    A \bigl((1 - t) X + t Y\bigr)
      \cdot (Y - X) 
      \dif t.
\end{equation}
We observe by integration by parts that 
\begin{equation}
  \label{eq_deriv_pot}
    D \intpot{A} (\cdot, Y) 
  = 
   - A.
 \end{equation}
Therefore, by the fundamental theorem of calculus, we have  
\begin{equation}\label{mvl-A}
    e^{i\,\intpot{A} (X, Y)} U (Y) - U (X)
  =
    \int_0^1 
      e^{i\,\intpot{A} (X, (1 - t) X + t Y )} 
      \nabla_A U \big((1 -t) X + t Y \big) \cdot (Y - X)
      \dif t.
\end{equation}

The following result will be used repeatedly in the present work. 

\begin{lemma} \label{lem-Magnetic}
  If \(d \ge 1\), \(A \in C^1 (\overline{\Rset^{d + 1}_+}, \Rset^{d + 1})\),
  then for every \(X, Y, Z \in \overline{\Rset^{d+1}_+}\), we have 
\begin{equation*}
  \intpot{A} (X, Y)
 + \intpot{A} (Y, Z)
 + \intpot{A} (Z, X)\\
 =
   \int_0^1 \int_0^{1 - s}
    d A \bigl((1 - t - s) X + t Y + s Z\bigr) [Y - X, Z - X]
    \dif t
    \dif s .
\end{equation*}
\end{lemma}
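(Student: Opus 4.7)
The identity is a concrete form of Stokes' theorem for the $1$-form $\omega \defeq A \cdot dx$ on the flat oriented triangle with vertices $X, Y, Z$: the left-hand side is the circulation of $A$ along the closed piecewise-linear path $X \to Y \to Z \to X$, and the right-hand side should arise from integrating $d A$ over the triangle's interior.

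Concretely, I would parametrize the triangle by $\phi : T \to \overline{\Rset^{d + 1}_+}$ defined by $\phi (s, t) \defeq (1 - s - t) X + t Y + s Z$, where $T \defeq \{(s, t) \in \Rset^2 \st s, t \ge 0,\ s + t \le 1\}$, so that $\phi_s \equiv Z - X$ and $\phi_t \equiv Y - X$. Introducing the two scalar functions
\[
  f (s, t) \defeq A (\phi (s, t)) \cdot (Y - X), \qquad g (s, t) \defeq A (\phi (s, t)) \cdot (Z - X),
\]
an elementary change of variable along each edge of $\partial T$ identifies the three terms on the left-hand side as $\intpot{A} (X, Y) = \int_0^1 f (0, t) \dif t$, $\intpot{A} (Z, X) = - \int_0^1 g (s, 0) \dif s$, and $\intpot{A} (Y, Z) = \int_0^1 \bigl( g (s, 1 - s) - f (s, 1 - s) \bigr) \dif s$. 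Summed, these three integrals are exactly the circulation of the planar $1$-form $- g \dif s - f \dif t$ along $\partial T$ in the counter-clockwise orientation.

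Green's theorem in the plane (itself a short consequence of the fundamental theorem of calculus on a triangle) then converts the left-hand side into $\int_T (\partial_t g - \partial_s f) \dif s \dif t$. The chain rule gives $\partial_t g - \partial_s f = DA (\phi) (Y - X) \cdot (Z - X) - DA (\phi)(Z - X) \cdot (Y - X)$, and the elementary algebraic identity $dA [u, v] = (DA\,u) \cdot v - (DA\,v) \cdot u$, which follows directly from $d A = \sum_{i < j} (\partial_i A_j - \partial_j A_i)\, dx^i \wedge dx^j$, identifies this with $dA (\phi (s, t)) [Y - X, Z - X]$, producing exactly the right-hand side of the lemma.

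There is no analytical obstacle since $A \in C^1$ throughout; the only real care needed is the sign- and orientation-bookkeeping for the three edges of $\partial T$. A purely computational alternative that sidesteps the geometric setup is to fix $X, Y$, let $Z_\lambda \defeq X + \lambda (Z - X)$, and verify by direct differentiation in $\lambda$ that the difference between the two sides of the claimed identity is identically zero, starting from the trivial vanishing at $\lambda = 0$; the algebra of this differentiation is essentially the same as the algebra of the Green's theorem route.
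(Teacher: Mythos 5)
Your argument is correct and is essentially the same as the paper's: the paper likewise writes $dA(\phi)[Y-X,Z-X]$ as the cross-derivative $\tfrac{\mathrm d}{\mathrm dt}A(\phi)[Z-X]-\tfrac{\mathrm d}{\mathrm ds}A(\phi)[Y-X]$, integrates over the triangle $\{s,t\ge 0,\ s+t\le 1\}$, and applies the fundamental theorem of calculus plus the change of variables $t\mapsto 1-s$ to identify the resulting boundary integrals with the three $\intpot{A}$ terms — which is precisely your Green's-theorem computation written out by hand. The only cosmetic difference is that the paper proceeds from the double integral toward the circulation, whereas you go the other way and name Green's theorem explicitly.
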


\begin{proof} We have if \(s, t \in [0, 1]\) and \(s + t \le 1\),
\begin{multline*}
d A \bigl((1 - t - s) X + t Y + s Z\bigr)[Y - X, Z - X]\\
= \frac{\mathrm{d}}{\mathrm{d} t} A \bigl((1 - t - s) X + t Y + s Z\bigr)[Z - X]
- \frac{\mathrm{d}}{\mathrm{d} s} A \bigl((1 - t - s) X + t Y + s Z\bigr)[Y - X]. 
\end{multline*}
Integrating with respect to $s, t \in [0, 1]$ with $s + t \le 1$ yields 
\begin{equation}
  \label{eq_ooxeiPeeNeiyo0Yio5r}
\begin{split}
\int_0^1 \int_0^{1 - s}&
d A \bigl((1 - t - s) X + t Y + s Z\bigr)[Y - X, Z - X]
\dif t
\dif s\\
= & \int_0^1 \int_0^{1 - s} \frac{\mathrm{d}}{\mathrm{d} t} A \bigl((1 - t - s) X + t Y + s Z\bigr)[Z - X] \dif t \dif s\\
&- \int_0^1 \int_0^{1 - t} \frac{\mathrm{d}}{\mathrm{d} s} A \bigl((1 - t - s) X + t Y + s Z\bigr)[Y - X] \dif s \dif t.
\end{split}
\end{equation}
By the fundamental theorem of calculus, for every \(s \in [0, 1]\) we have 
\begin{multline}
  \label{eq_Ahgu7shiengoo4gou6z}
\int_0^{1 - s} \frac{\mathrm{d}}{\mathrm{d} t} A \bigl((1 - t - s) X + t Y + s Z\bigr)[Z - X] \dif t\\
= A \bigl((1 - s) Y + s Z\bigr)[Z - X] - A \bigl((1 - s) X + s Z\bigr)[Z - X]
\end{multline}
and for every \(t \in [0, 1]\),  
\begin{multline}
  \label{eq_Shooth8Ah3ThooDoogh}
\int_0^{1 - t} \frac{\mathrm{d}}{\mathrm{d} s} A \bigl((1 - t - s) X + t Y + s Z\bigr)[Y - X] \dif s \\
= A \bigl(t Y + (1 -t) Z\bigr)[Y - X] - A \bigl((1 - t) X + t Y\bigr)[Y - X].
\end{multline}
By inserting \eqref{eq_Ahgu7shiengoo4gou6z}, \eqref{eq_Shooth8Ah3ThooDoogh} and \eqref{eq_ooxeiPeeNeiyo0Yio5r} 
and by applying the change of variable \(s = t\), \(t = 1 -s\), we obtain
\begin{equation*}
  \begin{split}
    \int_0^1 \int_0^{1 - s} &
    d A \bigl((1 - t - s) X + t Y + s Z\bigr) [Y - X, Z - X]
    \dif t
    \dif s\\
    &= \int_0^1 
        A \bigl((1 - s) Y + s Z\bigr)[Z - X] \dif s - \int_0^1  A \bigl(t X + (1-t) Z\bigr)[Z - X] \dif t \\
        &\qquad - \int_0^1  A \bigl((1 - s) Y + s Z\bigr)[Y - X] \dif s
        + \int_0^1 A \bigl((1 - t) X + t Y\bigr)[Y - X]\dif t\\
    &= \intpot{A}(Z, X) + \intpot{A} (Y, Z) + \intpot{A}(X, Y),
    \end{split}
\end{equation*}
in view of the definition in \eqref{eq_def_intpot}.
\end{proof}

Using \cref{lem-Magnetic}, we can establish the following simple  result which is the key ingredient of the proof of Proposition~\ref{proposition_Bdd_dA_trace}. 

\begin{lemma} \label{lem-E-2} 
  If \(d \ge 1\), \( U \in C^1(\overline{\Rset^{d+1}_+}, \Cset)\) and \(A \in C^1 (\overline{\Rset^{d + 1}_+},  \Rset^{d + 1})\), then for every \(X, Y, Z \in \overline{\Rset^{d+1}_+}\), we have 
 \begin{multline*}
   \bigabs{e^{i\,\intpot{A} (X, Y)} U (Y) 
     - U (X)}
 \le 
    \bigabs{
        e^{i\,\intpot{A} (Z, Y)} U (Y) 
      -  
      U (Z)}
    + \bigabs{
        e^{i\,\intpot{A} (Z, X)} U (X)
         - U (Z)
      }\\
   +
   \abs{U (Z)} \min\left\{1, \tfrac{1}{2} \norm{dA}_{L^\infty} \abs{ (X - Z) \wedge (Y - Z)} \right\}.
\end{multline*}
\end{lemma}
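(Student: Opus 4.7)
The plan is to introduce the \emph{gauge-transformed} scalar \(v (P) \defeq e^{i\,\intpot{A} (Z, P)} U (P)\). Since \(\intpot{A} (Z, Z) = 0\) and \(\bigabs{e^{i\,\intpot{A} (Z, P)}} = 1\), we have \(v (Z) = U (Z)\) and \(\abs{v (P)} = \abs{U (P)}\) for every \(P\); in particular the first two quantities on the right-hand side become exactly \(\abs{v (Y) - v (Z)}\) and \(\abs{v (X) - v (Z)}\). It therefore suffices to compare the left-hand side with a suitable expression involving \(v (X), v (Y), v (Z)\).

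To this end, I would apply \cref{lem-Magnetic} to the triple \((X, Y, Z)\). Using the antisymmetry \(\intpot{A} (P, Q) = -\intpot{A} (Q, P)\) (which follows from the change of variable \(t \mapsto 1 - t\) in \eqref{eq_def_intpot}), the conclusion of \cref{lem-Magnetic} rewrites as
\begin{equation*}
  \intpot{A} (X, Y) = \intpot{A} (Z, Y) - \intpot{A} (Z, X) + \Phi,
\end{equation*}
where
\begin{equation*}
  \Phi \defeq \int_0^1 \int_0^{1 - s} dA \bigl((1 - t - s) X + t Y + s Z\bigr) [Y - X, Z - X] \dif t \dif s.
\end{equation*}
Since \(dA\) is a \(2\)-form, its integrand is pointwise bounded by \(\norm{dA}_{L^\infty} \abs{(Y - X) \wedge (Z - X)}\); using that the reference triangle has area \(\tfrac{1}{2}\) and the algebraic identity \((Y - X) \wedge (Z - X) = (X - Z) \wedge (Y - Z)\), I would deduce
\begin{equation*}
  \abs{\Phi} \le \tfrac{1}{2} \norm{dA}_{L^\infty} \abs{(X - Z) \wedge (Y - Z)}.
\end{equation*}

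Substituting the triangle identity into the left-hand side and pulling out the unit-modulus factor \(e^{i\,\intpot{A} (Z, X)}\) gives
\begin{equation*}
  \bigabs{e^{i\,\intpot{A} (X, Y)} U (Y) - U (X)} = \bigabs{e^{i \Phi} v (Y) - v (X)}.
\end{equation*}
I would then telescope by inserting \(v (Z) = U (Z)\) twice,
\begin{equation*}
  e^{i \Phi} v (Y) - v (X) = e^{i \Phi} \bigl(v (Y) - v (Z)\bigr) + \bigl(e^{i \Phi} - 1\bigr)\, v (Z) + \bigl(v (Z) - v (X)\bigr),
\end{equation*}
apply the triangle inequality, and use the elementary estimate \(\bigabs{e^{i \Phi} - 1} \le \min\{2, \abs{\Phi}\}\) together with the above bound on \(\abs{\Phi}\) to finish.

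The only real obstacle is bookkeeping: one must check carefully that the triangle identity of \cref{lem-Magnetic} combined with the antisymmetry of \(\intpot{A}\) produces the correction \(\Phi\) with the correct sign, and that its integrand evaluates on \((Y - X, Z - X)\) in such a way that the wedge product \((X - Z) \wedge (Y - Z)\) arises naturally. Everything else is the standard device of adding and subtracting \(v (Z)\), which is precisely the reason why the coefficient multiplying \(\min\{1, \ldots\}\) in the conclusion is \(\abs{U (Z)}\) rather than \(\abs{U (X)}\) or \(\abs{U (Y)}\).
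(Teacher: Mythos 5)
Your proof is correct and essentially reproduces the paper's argument: the paper telescopes \(e^{i\,\intpot{A}(X,Y)}U(Y)-U(X)\) directly into three terms, each rewritten by factoring out a unit-modulus phase, and then applies \cref{lem-Magnetic} to the middle term, while your gauge-transformed scalar \(v(P) = e^{i\,\intpot{A}(Z,P)}U(P)\) is simply a cleaner bookkeeping device for the same telescoping. One small remark: the elementary estimate \(\bigabs{e^{i\Phi}-1}\le\min\{2,\abs{\Phi}\}\) yields \(\min\bigl\{2,\ \tfrac{1}{2}\norm{dA}_{L^\infty}\abs{(X-Z)\wedge(Y-Z)}\bigr\}\) in the last term, and the paper's own decomposition produces the same bound, so the factor \(1\) in the stated \(\min\{1,\cdot\}\) is a minor slip that is harmless in every subsequent application.
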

\begin{proof}
Since
\[
\bigabs{
  e^{i\, \intpot{A} (X, Y)} U (Y) 
  - 
  e^{i\, (\intpot{A} (X, Y) + \intpot{A} (Y, Z) )} U (Z)}
= 
\bigabs{
  e^{i\,\intpot{A} (Z, Y)} U (Y) 
  -  
  U (Z)},  
\]
and 
\[
\bigabs{
  U (X) 
  - 
  e^{i\,\intpot{A} (X, Z)}
  U (Z)
}
=
\bigabs{
  e^{i\,\intpot{A} (Z, X)} U (X) 
  -  
  U (Z)}, 
\]
by the triangle inequality, we obtain
\begin{multline*}
  \bigabs{e^{i\,\intpot{A} (X, Y)} U (Y) 
    - U (X)}
  \le 
  \bigabs{
    e^{i\,\intpot{A} (Z, Y)} U (Y) 
    -  
    U (Z)}
  + \bigabs{
    e^{i\,\intpot{A} (Z, X)} U (X)
    - U (Z)
  }\\
  +
  \bigabs{e^{i\, (\intpot{A} (X, Y) + \intpot{A} (Y, Z) )} U (Z) -  e^{i\,\intpot{A} (X, Z)}
    U (Z)}.
\end{multline*}
We observe that 
\begin{multline*}
  \bigabs{e^{i\, (\intpot{A} (X, Y) + \intpot{A} (Y, Z) )} U (Z) -  e^{i\,\intpot{A} (X, Z) U (Z)}
    U (Z)}
  = 
  \bigabs{e^{i\, (\intpot{A} (X, Y) + \intpot{A} (Y, Z) + \intpot{A} (Z, X))} - 1}\abs{U (Z)}. 
\end{multline*}
and conclude with \cref{lem-Magnetic}.  
\end{proof}

\begin{proof}
[Proof of \cref{proposition_Bdd_dA_trace}]
\resetconstant
Applying \cref{lem-E-2}, we have, for each \(x, y \in \Rset^d\) and with the notations  \(x \simeq (x, 0) \in \Rset^{d + 1}_+\), \(y \simeq (y, 0) \in \Rset^{d + 1}_+\),  \(Z  = (\frac{x+ y}{2}, \abs{y - x})\) and \(u = U (\cdot, 0)\),
\begin{multline}\label{eq_Vooph8jieH}
 \abs{
    e^{
          i\,\intpot{A} (x, y)
      }     
      u (y) 
  - 
    u (x)
    }\\[6pt]
  \le 
  \bigabs{
      e^{
          i\,\intpot{A} (Z, x)} U(x)- 
 U (Z)}    + \bigabs{
      e^{
          i\,\intpot{A} (Z, y) } U (y) - U (Z)} +  \abs{U (Z)} \norm{dA}_{L^\infty}^{1/2} \abs{y - x}.
\end{multline}
Using \eqref{mvl-A}, we derive from \eqref{eq_Vooph8jieH} that 
\begin{multline}\label{eq_Vooph8jieH-1}
\abs{e^{  i\,\intpot{A} (x, y)} u (y) - u (x)} \le 
        \abs{y - x}
        \int_0^1 
          \bigabs{\nabla_A U \big( (1-t) x + t Z \big)}
          \dif t 
      \\
 + \abs{y - x}
        \int_0^1 
          \bigabs{\nabla_A U \big( (1-t) y + t Z \big)}
          \dif t  
          + \C \beta^\frac{1}{2} \abs{y - x} 
    \bigabs{U(Z)}. 
    \end{multline}
    Here and in what follows in this proof, \(C_1, C_2, \dotsc\) denote positive constants depending only on \(d, p\), and \(s\).

Since \(0 < s < 1\), 
using the fact, for a measurable function \(f\) defined on \([0, 1]\),  by H\"older's inequality, that 
\begin{equation*}
\left(\int_0^1 |f(t)| \dif t \right)^{p} \le \C \int_0^1 t^{(1-s)(p-1)}|f(t)|^p \dif t, 
\end{equation*}
we derive from \eqref{eq_Vooph8jieH-1} that, since \(Z = (\frac{x + y}{2}, \abs{y - x})\),
\begin{multline}
\label{eq_looPhaWoo5}
\frac{\abs{e^{  i\,\intpot{A^\shortparallel} (x, y)} U (y) - U (x)}}{\abs{y - x}^{d + sp}} \le 
\C\Biggl(
         \int_0^1 t^{(1-s)(p-1)}
         \frac{\bigabs{\nabla_A U \big( (1 - \tfrac{t}{2}) x + \tfrac{t}{2} y, t \abs{y - x} \big)}^p}{\abs{y - x}^{d - (1 - s) p}} 
          \dif t 
      \\[6pt]
      \shoveright{+ 
        \int_0^1 t^{(1-s)(p-1)} 
        \frac{\bigabs{\nabla_A U \big( (1 - \tfrac{t}{2}) y + \tfrac{t}{2} x, t \abs{y - x})\big)}^p}{{\abs{y - x}^{d - (1 - s) p}}}
        \dif t } \\
          +  \beta^\frac{p}{2}\frac{\bigabs{U(\frac{x + y}{2}, \abs{y - x})}^p}{\abs{y - x}^{d - (1 - s) p}} 
  \Biggr). 
\end{multline}

We now estimate the integral of the left-hand side of \eqref{eq_looPhaWoo5} with respect to \(x\) and \(y\) by estimating the integrals of the three terms on the right-hand side. 
For every \(t \in (0, 1)\), making the change of variable \(\eta = (1 - \frac{t}{2}) x + \tfrac{t}{2} y\) and \(\xi = t (x - y)\),  we obtain for every \(t \in (0, 1)\),
\begin{multline}\label{pro-P1-p0}
  t^{(1-s)(p-1)}  \iint\limits_{\Rset^d \times \Rset^d}      
      \frac
      {\bigabs{\nabla_A (U (1 - \tfrac{t}{2}) x + \tfrac{t}{2} y, t \abs{y - x} \big)}^p}
        {\abs{y - x}^{d - (1 - s) p}}
        \dif x \dif y\\
        = \frac{1}{t^{1 - s}} \iint\limits_{\Rset^d \times \Rset^d}      
      \frac
        {\bigabs{\nabla_A U (\eta, \abs{\xi})}^p}
        {\abs{\xi}^{d - (1 - s) p}}
        \dif \eta \dif \xi 
        =  \frac{|\Sset^{d-1} |}{t^{1 - s}} \iint\limits_{\Rset^d \times (0, + \infty)}      
      \frac
        {\bigabs{\nabla_A U (\eta, r)}^p}
        {r^{1- (1 - s) p}}
        \dif \eta \dif  r . 
\end{multline}
Since \(0 < s < 1\), it follows that  
\begin{multline}\label{pro-P1-p1}
 \iint\limits_{\Rset^d \times \Rset^d}     \int_0^1 t^{(1-s)(p-1)}
 \frac{\bigabs{\nabla_A \bigl(U (1 - \tfrac{t}{2}) x + \tfrac{t}{2} y, t \abs{y - x} \big))}^p}{\abs{y - x}^{d - (1 - s) p}} 
          \dif t  \dif x \dif y  \\[-1em]
          \le \Cl{cst_ahWuaHuuz5ohgaih}  \iint\limits_{\Rset^d \times (0, + \infty)}      
      \frac
        {\bigabs{\nabla_A U (x, r)}^p}
        {r^{1- (1 - s) p}}
        \dif x  \dif  r. 
\end{multline}
Similarly, we have
\begin{multline}\label{pro-P1-p2}
 \iint\limits_{\Rset^d \times \Rset^d}     \int_0^1 t^{(1-s)(p-1)}
 \frac{\bigabs{\nabla_A \bigl(U (1 - \tfrac{t}{2}) y + \tfrac{t}{2} x, t \abs{y - x} \big))}^p}{\abs{y - x}^{d - (1 - s) p}} 
          \dif t  \dif x \dif y \\[-1em]
          \le \Cr{cst_ahWuaHuuz5ohgaih}   \iint\limits_{\Rset^d \times (0, + \infty)}      
      \frac
        {\bigabs{\nabla_A U (x, r)}^p}
        {r^{1- (1 - s) p}}
      \dif x    \dif  r.
\end{multline}
Using  the change of variable \(\eta = \frac{x + y}{2}\) and 
\(\xi = y - x\),  and the polar coordinates, by the same way to obtain \eqref{pro-P1-p0},  we also reach  
\begin{equation}\label{pro-P1-p3}
    \iint\limits_{\Rset^d \times \Rset^d}
    \frac{\bigabs{U (\frac{x + y}{2}, \abs{y - x})}^p}{\abs{x - y}^{d - (1 - s) p}}
      \dif x
      \dif y 
      =   \abs{\Sset^{d - 1}}\iint\limits_{\Rset^d \times (0, +\infty)}
      \frac{ \bigabs{U (\eta, r)}^p}{r^{1 - (1 - s) p}}
      \dif \eta
      \dif r.
\end{equation}

Combining  \eqref{eq_looPhaWoo5}, \eqref{pro-P1-p1}, \eqref{pro-P1-p2}, and \eqref{pro-P1-p3} yields  
\begin{equation}
\label{eq_voh2quoo7D}
\seminorm{u}_{W^{s, p}_{A^{\shortparallel}}(\Rset^d)}^p 
  \le \C
          \iint\limits_{\Rset^{d + 1}_+}
            \frac{\bigabs{\nabla_A U (z, t)}^p + \norm{dA}_{L^\infty}^\frac{p}{2} \abs{U (z, t)}^p}{t^{1 - (1 - s)p}}  \dif z \dif t, 
\end{equation}
which is \eqref{pro-P1-state1}. 


We next prove \eqref{pro-P1-state2}.  Since, by the diamagnetic inequality \(\abs{\nabla \abs{U}} \le \abs{\nabla_A U}\) in \(\Rset^{d + 1}_+\), 
we have,  for  \(x \in \Rset^d\) and \(t \ge 0\),
\begin{equation*}
  \abs{U(x, 0)}
  \le \abs{U(x, t)} + \int_0^t \abs{\nabla_A U(x, s)} \dif s.  
\end{equation*}
It follows that, for \(x \in \Rset^d\) and \(\lambda  > 0\), 
\[
    \abs{U (x, 0)} 
  \le 
      \int_{0}^\lambda \abs{\nabla_A U (x, t)} \dif t
    +
      \frac
        {2}
        {\lambda}
      \int_{\lambda/2}^\lambda
        \abs{U (x, t)}
        \dif t.
\]
Using H\"older's inequality, we deduce that 
\begin{equation}
\label{eq_Ua6eeFeQui}
\begin{split}
    \Biggl(
    \int_{\Rset^d} 
      \abs{U (x, 0)}^p \dif x
      \Biggr)^\frac{1}{p}
  \le & 
  \int_{0}^\lambda
    \Biggl(
      \int_{\Rset^d} \abs{\nabla_A U (x, t)}^p \dif x
      \Biggr)^\frac{1}{p}\dif t
  + 
  \frac{2}{\lambda} 
  \int_{\frac{\lambda}{2}}^\lambda
  \Biggl(
  \int_{\Rset^d} \abs{U (x, t)}^p \dif x
  \Biggr)^\frac{1}{p}
  \dif t \\[6pt]
  \le & 
  \C \,\lambda^s
  \Biggl(\ 
  \iint\limits_{\Rset^{d+1}_+} \frac{\abs{\nabla_A U (x, t)}^p}{t^{1 - (1 - s)p}} \dif x \dif t
  \Biggr)^\frac{1}{p}
  + 
  \frac{\C }{\lambda^{1 - s}}
  \Biggl(\ 
  \iint\limits_{\Rset^{d+1}_+ } \frac{\abs{U (x, t)}^p}{t^{1 - (1 - s)p}} \dif x \dif t
  \Biggr)^\frac{1}{p} 
  . 
\end{split}
\end{equation}
Optimizing with respect to \(\lambda > 0\), we obtain 
\begin{equation}
  \label{eq_HaT6soayai}
  \Biggl(
  \int_{\Rset^d} 
  \abs{U (x, 0)}^p \dif x
  \Biggr)^\frac{1}{p}
  \le \C\Biggl(\ 
  \iint\limits_{\Rset^{d+1}_+} \frac{\abs{\nabla_A U (x, t)}^p}{t^{1 - (1 - s)p}} \dif x \dif t
  \Biggr)^\frac{1-s}{p}  \Biggl(\ 
  \iint\limits_{\Rset^{d+1}_+ } \frac{\abs{U (x, t)}^p}{t^{1 - (1 - s)p}} \dif x \dif t
  \Biggr)^\frac{s}{p}, 
\end{equation}
which is \eqref{pro-P1-state2}.  
\end{proof}

In what follows $B(x, R)$ denotes the open ball in $\Rset^d$ centered at $x$ and of radius $R$; when $x = 0$, one uses the notation $B_R$ instead.  Using the same arguments, we obtain  a localized version of \cref{proposition_Bdd_dA_trace} which will be used in Section~\ref{sect-domain}.

\begin{proposition}
  \label{proposition_Bdd_dA_trace_ball}
  Let \(d \ge 1\), \(0 < s < 1\) and \(1 \le p < +\infty\).
  There exists a positive  constant \(C_{d, s, p}\) depending only on $s, p$, and $d$ such that if \(R \in (0, + \infty)\), \(A \in C^1 (B (0, R) \times [0, R],  \Rset^{d + 1})\), if \(\norm{dA}_{L^\infty} \le \beta\) and if \(U \in C^\infty (B [0, R] \cap \Rset^{d + 1}_+)\) and \(u = U (\cdot, 0)\), then 
  \begin{equation*}
    \iint\limits_{B (0, R) \times B (0, R)}
    \hspace{-1em}
    \frac{\abs{
        e^{
          i\,\intpot{A^\shortparallel} (x, y) 
        } 
        U (y, 0) 
        - 
        U (x, 0)
      }^p}{\abs{y - x}^{d + s p}}
    \dif x \dif y
    \\
    \le
    C_{d, s, p}
        \hspace{-1.3em}
    \iint\limits_{B (0, R) \times [0, R]}
    \hspace{-1em}
    \frac{\bigabs{\nabla_A U (z, t)}^p + \beta^\frac{p}{2} \abs{U (z, t)}^p}{t^{1 - (1 - s)p}} \dif t \dif z
    .
  \end{equation*}
  \end{proposition}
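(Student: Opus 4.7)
The plan is to mirror the proof of \cref{proposition_Bdd_dA_trace}, with only one substantive modification: the auxiliary point $Z$ must be chosen so that it, together with the straight line segments joining it to $(x,0)$ and $(y,0)$, stays inside the half-ball $B[0,R] \cap \overline{\Rset^{d+1}_+}$ where $U$ is defined.

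For $x, y \in B(0, R)$, I would set $X = (x, 0)$, $Y = (y, 0)$ and replace the global choice $Z = ((x+y)/2, \abs{y-x})$ by
\begin{equation*}
Z \defeq \Bigl(\tfrac{x+y}{2},\, \tfrac{\abs{y - x}}{2}\Bigr).
\end{equation*}
The parallelogram identity then gives
\begin{equation*}
\abs{Z}^2 = \Bigabs{\tfrac{x+y}{2}}^2 + \tfrac{\abs{y-x}^2}{4} = \tfrac{\abs{x}^2 + \abs{y}^2}{2} \le R^2,
\end{equation*}
so $Z \in B[0,R] \cap \overline{\Rset^{d+1}_+}$. By convexity the segments from $(x,0)$ and $(y,0)$ to $Z$ stay in this set, so all quantities produced by \cref{lem-E-2} and the formula \eqref{mvl-A} make sense.

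Applying \cref{lem-E-2} at these $X, Y, Z$ and using \eqref{mvl-A} on the first two terms, exactly as in the beginning of the proof of \cref{proposition_Bdd_dA_trace}, yields the analogue of \eqref{eq_Vooph8jieH-1}. A direct computation shows
\begin{equation*}
\abs{(X - Z) \wedge (Y - Z)} = \tfrac{1}{2}\abs{y - x}^2,
\end{equation*}
so the curvature term is still controlled by $\C \beta^{1/2} \abs{y-x}\, \abs{U(Z)}$ via $\min\{1, t\} \le t^{1/2}$. Dividing by $\abs{y-x}^{d+sp}$ and applying Hölder's inequality with the weight $t^{(1-s)(p-1)}$ on the integrals over $t \in [0,1]$ (as in the derivation of \eqref{eq_looPhaWoo5}) produces the pointwise bound on $\abs{e^{i\,\intpot{A^\shortparallel}(x,y)} u(y) - u(x)}^p/\abs{y-x}^{d+sp}$.

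It then remains to integrate over $(x, y) \in B(0,R) \times B(0,R)$ and perform, for each $t \in (0,1)$, the change of variables $\eta = (1-\tfrac{t}{2}) x + \tfrac{t}{2} y$, $\xi = \tfrac{t}{2}(y-x)$, followed by polar coordinates in $\xi$, so that $(\eta, \abs{\xi})$ is precisely the argument of $\nabla_A U$ in the resulting integrand. The key verification here is that the images of these changes of variables lie in $B(0,R) \times [0,R]$: convexity of $B(0,R)$ gives $\eta \in B(0,R)$, and $\abs{\xi} = \tfrac{t}{2}\abs{y-x} \le R$ since $t \le 1$ and $\abs{y-x} \le 2R$. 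The same change of variables handles the curvature term by viewing $Z = (\eta', \abs{\xi'})$ with $\eta' = (x+y)/2$ and $\xi' = (y-x)/2$. Assembling these estimates gives the desired bound.

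The only genuine obstacle beyond the global proof is this geometric constraint on $Z$; the choice of the constant $\tfrac{1}{2}$ in the vertical component of $Z$ is dictated precisely by the parallelogram identity above, and once this is in place the rest of the argument runs in parallel with \cref{proposition_Bdd_dA_trace}.
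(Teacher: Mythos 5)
Your proposal is correct, and it matches what the paper intends: the paper gives no proof of \cref{proposition_Bdd_dA_trace_ball}, saying only that it follows ``using the same arguments'' as \cref{proposition_Bdd_dA_trace}. You have correctly identified the one genuine wrinkle in that claim, namely that the auxiliary point $Z = ((x+y)/2, \abs{y-x})$ used in the global proof can leave the half-ball (indeed $\abs{Z}^2 = \tfrac{\abs{x}^2+\abs{y}^2}{2} + \tfrac{3}{4}\abs{y-x}^2$ can exceed $R^2$), and your replacement $Z = ((x+y)/2, \abs{y-x}/2)$ — with $\abs{Z}^2 = \tfrac{\abs{x}^2+\abs{y}^2}{2} \le R^2$ by the parallelogram law — fixes this while keeping $(X-Z)\perp(Y-Z)$, so that $\abs{(X-Z)\wedge(Y-Z)} = \tfrac12\abs{y-x}^2$ and the curvature term from \cref{lem-E-2} is still $\lesssim \beta^{1/2}\abs{y-x}\abs{U(Z)}$. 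The change of variables and the verification that the images land in $B(0,R)\times[0,R]$ are also correct; the factor of $\tfrac12$ only changes the constant.
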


\section{Extension to the half-space}

In this section, we prove the following extension result which implies \eqref{thm1-2} of  \cref{thm1}. 

\begin{proposition}
\label{proposition_Bdd_dA_extension}
Let \(d  \ge 1\),  \(0 < s < 1\) and \(1\le p < +\infty\). 
There exists a positive constant \( C_{d, s, p}\) depending only on \(d\), \(s\), and \(p\) such that for
every \(A \in C^1 (\overline{\Rset^{d + 1}_+},  \Rset^{d + 1})\) with \(\norm{d A}_{L^\infty (\Rset^{d + 1}_+)} \le \beta\)
and for any \(u \in C^1_{c}(\Rset^d, \Cset)\) with compact support, one can find \(U \in C^1_c (\overline{\Rset^{d+1}_+})\) depending linearly on \(u\) such that 
\(U(x, 0) = u(x)\) in \(\Rset^{d}\), 
\begin{equation*} 
   \iint\limits_{\Rset^{d + 1}_+}
   \frac{\abs{\nabla_A U (x, t)}^p}
      {t^{1 - (1 - s)p}} \dif x \dif t
  \le 
  C_{d, s, p}
  \Bigl( \seminorm{u}_{W^{s, p}_{A^{\shortparallel}}(\Rset^d)}^p 
  + \beta^\frac{sp}{2}\norm{u}_{L^p(\Rset^d)}^p  \Bigr)
\end{equation*}
and 
\begin{equation*}
  \iint\limits_{\Rset^{d + 1}_+}
  \frac{  \abs{U (z, t)}^p}
  {t^{1 - (1 - s)p}} \dif x \dif t
  \le 
  \frac{C_{d, s, p}}{\beta^\frac{(1 - s)p}{2}}
  \norm{u}_{L^p(\Rset^d)}^p. 
\end{equation*}
\end{proposition}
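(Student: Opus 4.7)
The plan is to construct $U$ via the magnetic version of Gagliardo's classical extension formula, using parallel transport along straight segments to make the construction gauge covariant. Fix a radial mollifier $\rho \in C^\infty_c(B_1)$, $\rho \ge 0$, $\int \rho = 1$, write $\rho_t(z) \defeq t^{-d}\rho(z/t)$, and fix a cut-off $\chi \in C^\infty_c([0, +\infty))$ equal to $1$ on $[0, 1]$ and $0$ on $[2, +\infty)$. I set
\begin{equation*}
U(x, t) \defeq \chi(\beta^{1/2} t)\, V(x, t), \qquad V(x, t) \defeq \int_{\Rset^d} \rho_t(x - y) \, e^{i\, \intpot{A}((x, t), (y, 0))} u(y) \dif y.
\end{equation*}
The phase is chosen to make the construction gauge covariant (under $A \mapsto A + d\Phi$, $u \mapsto e^{-i\Phi}u$ one computes $U \mapsto e^{-i\Phi}U$ using $\intpot{A + d\Phi}(X,Y) = \intpot{A}(X,Y) + \Phi(Y) - \Phi(X)$). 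One verifies from $\rho_t \to \delta$ and the boundedness of the phase near the diagonal that $U(\cdot, 0) = u$, and the compact support of $u$ together with the cut-off in $t$ delivers $U \in C^1_c(\overline{\Rset^{d + 1}_+})$. The $\beta$-dependent scale of the cut-off is precisely what produces the factor $\beta^{-(1 - s)p/2}$ in the weighted $L^p$ bound.

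The heart of the argument is the computation of $\nabla_A V$. Because \eqref{eq_deriv_pot} gives $D_X \intpot{A}(X, \cdot) = -A(X)$, the additive potential $iAV$ in the covariant derivative cancels exactly against the $X$-derivative of the phase, yielding
\begin{equation*}
\nabla_A V(x, t) = \int_{\Rset^d} \nabla_{(x, t)} \bigl[\rho_t(x - y)\bigr] \, e^{i\, \intpot{A}((x, t), (y, 0))} u(y) \dif y.
\end{equation*}
Since $\int_{\Rset^d} \nabla_{(x, t)} \rho_t(x - y) \dif y = 0$, I subtract the constant (in $y$) $e^{i\, \intpot{A}((x, t), (x, 0))} u(x)$ inside the integrand; applying \cref{lem-Magnetic} to the triple $X = (x, t)$, $Y = (x, 0)$, $Z = (y, 0)$ (whose associated triangle has area $\tfrac{1}{2} t \abs{y - x}$) then yields the factorisation
\begin{equation*}
e^{i\, \intpot{A}((x, t), (y, 0))} = e^{i\, \intpot{A}((x, t), (x, 0))} \, e^{i\, \intpot{A^\shortparallel}(x, y)} \, e^{-i R(x, t, y)}, \qquad \abs{R(x, t, y)} \le \tfrac{1}{2} \beta\, t\, \abs{y - x}.
\end{equation*}
Combining this with $\abs{e^{-iR} - 1} \le \min\{2, \abs{R}\}$ gives the pointwise bound
\begin{equation*}
\abs{\nabla_A V(x, t)} \le \frac{C}{t^{d + 1}} \int_{B(x, t)} \bigabs{e^{i\, \intpot{A^\shortparallel}(x, y)} u(y) - u(x)} \dif y + \frac{C}{t^{d + 1}} \int_{B(x, t)} \min\{2, \beta t \abs{y - x}\} \abs{u(y)} \dif y,
\end{equation*}
in which $A$ enters only through the gauge-invariant quantity $\beta$; this is the decisive ingredient.

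The remainder is a routine scaled reprise of the classical Gagliardo argument. Multiplying by $t^{(1 - s)p - 1}$, applying Jensen's inequality to the Euclidean averages, and exchanging the order of integration, the first term produces, after integrating $t$ from $\abs{y - x}$ to $+\infty$, a constant multiple of $\seminorm{u}^p_{W^{s, p}_{A^\shortparallel}(\Rset^d)}$; the second term, after splitting the $t$-range at $\beta^{-1/2}$ and using $(\beta t \abs{y-x})^p$ on the lower range and $2^p$ on the upper range, contributes at most $C \beta^{sp/2} \norm{u}_{L^p(\Rset^d)}^p$. The extra term $\beta^{1/2} \chi'(\beta^{1/2} t)\, V\, e_{d + 1}$ appearing in $\nabla_A U$ from the cut-off is controlled via the pointwise bound $\norm{V(\cdot, t)}_{L^p(\Rset^d)} \le \norm{u}_{L^p(\Rset^d)}$ (Jensen applied to the probability measure $\rho_t$, using that the phase has unit modulus) combined with the elementary estimate $\int_{\beta^{-1/2}}^{2\beta^{-1/2}} \beta^{p/2} t^{(1 - s)p - 1} \dif t \le C \beta^{sp/2}$. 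The very same pointwise bound on $V$ together with $\int_0^\infty \chi(\beta^{1/2} t)^p t^{(1 - s)p - 1} \dif t \le C \beta^{-(1 - s)p/2}$ yields the weighted $L^p$ estimate on $U$ directly.

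The principal obstacle I expect is exactly the point that \cref{lem-Magnetic} is designed to resolve: controlling the discrepancy between the extension phase $\intpot{A}((x, t), (y, 0))$ and the trace phase $\intpot{A^\shortparallel}(x, y)$ by a quantity depending on $A$ only through $dA$, not through any non-gauge-invariant norm of $A$ itself. Once this identification is in hand, the remaining work is dimensional analysis in the $t$-integrals and a careful choice of cut-off scale.
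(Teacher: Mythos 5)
Your construction, the cancellation via $D_X\intpot{A}(X,\cdot)=-A(X)$, the zero-integral trick, the application of \cref{lem-Magnetic} to $(x,t),(x,0),(y,0)$ to factor out the boundary phase with remainder $O(\beta t\abs{y-x})$, the Jensen step, and the $\beta^{-1/2}$-scaled cut-off are all the same ingredients as the paper's proof (the paper folds the cut-off $\theta$ directly into $U$ and tracks its derivative as a separate term $L_3$, whereas you cut off $V$ afterward, but this is an organizational choice). The proposal is correct and essentially identical to the paper's argument.
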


\begin{proof}
\resetconstant%
  Let \(\varphi \in C^\infty_c (\Rset^d)\)  and \(\theta \in C^\infty(\Rset)\) be such that 
  \begin{gather*}
\int_{\Rset^d} \varphi = 1, \quad  \varphi (x) = 0 \text{ for }  \abs{x} > 1,\\[1em]
\theta = 1 \text{ in } (-a/2, a/2), \quad  \theta = 0 \text{ in } \Rset \setminus (-a, a), \quad \text{ and } \quad  |\theta'| \le \Cl{cst_air1aiGheiCh0soo1}/a \text{ in \(\Rset\)},
\end{gather*}
with \(a = \beta^{-1/2}\) for some  positive constant \(\Cr{cst_air1aiGheiCh0soo1}\) independent of \(a\), and set, for \(t > 0\) 
\begin{equation*}
\varphi_t (\cdot ) \defeq t^{-d} \varphi(\cdot /t) \quad \text{ in }  \Rset^d. 
\end{equation*}

We define the function \(U : \Rset^{d + 1}_+ \to \Cset\) by setting, for every \((x, t) \in \Rset^d \times [0, +\infty)\),  
\begin{equation}
\label{eq_def_extension_Lip}
U (x, t) \defeq \theta(t)  \int_{\Rset^d}
\varphi_t \,(x - y) \, e^{i  \intpot{A} ((x, t), y)} \, u (y) \dif y,
\end{equation}
where we have identified the point \((y, 0) \in \Rset^{d} \times \{0\} \subset \Rset^{d + 1}\) with \(y \in \Rset^d\).

By \eqref{eq_deriv_pot},  for every \((x, t) \in \Rset^{d + 1}_+\), we have,  for \(1 \le j \le d\), 
\begin{equation*}
\partial_j U(x, t) + i A_j (x, t) U (x, t) = \theta (t) \int_{\Rset^d} 
\partial_j \varphi_t \,(x - y) \, e^{i  \intpot{A} ((x, t), y)}  \, u (y) \dif y
\end{equation*}
and 
\begin{multline*}
  \partial_{d + 1} U(x, t) + i A_{d + 1} (x, t) U (x, t) \\
  = 
  \int_{\Rset^d} \biggl(\theta'(t)\varphi_t\,(x - y) - \theta (t) \Bigl(\frac{d}{t}
    \varphi_t\,(x - y)
    -\frac{\theta(t)}{t^{d + 2}} \nabla  \varphi \, \Big(\frac{x - y}{t} \Bigr) \cdot (x - y)  \Bigr) \biggr) 
    \, e^{i  \intpot{A} ((x, t),y)} \, u (y) \dif y.
\end{multline*}
With the notations
\(\Phi^i_t (z) \defeq \partial_j \varphi_t (z)\) if \(1 \le j \le d\) and \(\Phi^{d + 1}_t (z) =  - \frac{d}{t} \varphi_t (z) -t^{-(d + 2)}\nabla \varphi (z/t) \cdot z\) for \(z \in \Rset^d\),  we have, since \(\int_{\Rset^d} \Phi^i_t =0\), 
\begin{multline}\label{computation-2}
\int_{\Rset^d}
\Phi^i_t \,(x - y) \, e^{i  \intpot{A} ((x, t), y)}  \, u (y) \dif y \\
= \int_{\Rset^d}
\Phi^i_t (x - y) \, \big(e^{i  \intpot{A} ((x, t), y)} - e^{i  \intpot{A} (x, y) + i  \intpot{A} ((x, t), x)}  \big)  \, u (y)   \dif y \\
+ \int_{\Rset^d}
\Phi^i_t (x - y) e^{ i\,\intpot{A} ((x, t), x)}  \Bigl( e^{i  \intpot{A^\shortparallel} (x, y) } u(y) - u(x) \Bigr) \dif y.
\end{multline}
It follows that,  for every \((x, t) \in \Rset^{d + 1}_+\),
\begin{equation}\label{computation-3-1}
  \abs{\nabla_A U (x, t)} \le \C \,\bigl(L_1(x, t) + L_2(x, t) +  L_3(x, t) \bigr), 
\end{equation}
where the functions \(L_1, L_2, L_3 : \Rset^{d + 1}_+ \to \Rset\) are defined for each \((x, t) \in \Rset^{d + 1}_+\) by
\begin{align*}
  \displaystyle L_1 (x, t)&\defeq    \frac{\mathds{1}_{(0, a)}(t)}{t^{d +1} } \int_{B(x, t)} \Bigabs{e^{i  \intpot{A^\shortparallel} (x, y)} u(y) - u(x) } \dif y, \\
  \displaystyle L_2(x, t)&\defeq   \frac{\mathds{1}_{(0, a)}(t)}{t^{d+1}} \int_{B(x, t)}  \Bigabs{e^{i  ( \intpot{A} ((x, t), y ) + \intpot{A^{\shortparallel}} (y , x) + \intpot{A} (x, (x, t)))} - 1 }  \, 
  \abs{u (y)} \dif y,  \\
\displaystyle L_3 (x, t)&\defeq   \frac{\mathds{1}_{(0, a)} (t)}{a\, t^d}  \int_{B (x, t)} \abs{u(y)} \dif y.  
\end{align*}

We first have,  by H\"older's inequality,
\begin{equation}\label{computation-4}
  \begin{split}
  \iint\limits_{\Rset^{d+1}_+}  \frac{\abs{L_1(x, t)}^p}{t^{1 - (1-s) p}}  \dif x \dif t  
& 
\le \C 
\iint\limits_{\Rset^{d+1}_+} \frac{1}{t^{1  + d + s p}} \biggl(\int_{B(x, t)} |e^{i  \intpot{A^\shortparallel} (x, y)} u(y) - u(x)|^p \dif y \biggr) \dif x \dif t \\
& \le \C \iint\limits_{\Rset^d \times \Rset^d} \frac{|e^{i  \intpot{A^\shortparallel} (x, y)} u(y) - u(x)|^p}{\abs{y - x}^{d + sp}} \dif x \dif y.  
\end{split}
\end{equation}

Next, by \cref{lem-Magnetic}, if \(x, y \in \Rset^d\) and \(t \in (0, +\infty)\), we have  
\begin{equation*}
  \bigabs{ \intpot{A} ((x, t), y) + \intpot{A} (y, x) + \intpot{A} (x, (x, t))} 
  \le 
  \C 
  \,
  \norm{d A}_{L^\infty (\Rset^{d + 1}_+)} t^2,  
\end{equation*}
and therefore, \(\abs{y - x} \le t\),  
\begin{equation*}
  \bigabs{e^{i (\intpot{A} ((x, t), y) + \intpot{A} (y, x) + \intpot{A} (x, (x, t)) )} - 1 } 
  \le 
  \C
  \,
  \beta^{1/2} 
  t. 
\end{equation*}
It follows from  Fubini's theorem that 
\begin{equation}\label{computation-5}
  \begin{split}
    \iint\limits_{\Rset^{d+1}_+} \frac{\abs{L_2(x, t)}^p}{ t^{1 - (1-s) p}} \dif x \dif t  
  &\le \C \iint\limits_{\Rset^{d} \times (0, a)}   \frac{\beta^{p/2}}
    {t^{d + 1 - (1 - s)p} }
    \int\limits_{B(x, t)} |u(y)|^p \dif y \dif x \dif t \\
    & = \C \int_0^a  \frac{\beta^{\frac{p}{2}}}
    {t^{1 - (1 - s)p} } \int_{\Rset^d} \abs{u (y)}^p \dif y \dif t
    = \C  \, \beta^{\frac{sp}{2}}  \int_{\Rset^d }  |u(y)|^p \dif y.
  \end{split}
\end{equation}
Finally, we have 
\begin{equation} 
  \label{computation-6} 
  \iint\limits_{\Rset^{d+1}_+}  \frac{\abs{L_3(x,  t)}^p}{a^p\, t^{1 - (1-s) p}} \dif x \dif t 
  \le \C \int\limits_{\Rset^d \times (0, a)} \int\limits_{B (x, t)} \abs{u (y)}^p \dif y \dif x \dif t \le \C \, \beta^\frac{sp}{2}   \int_{\Rset^d} \abs{u}^p.
\end{equation}
Combining \eqref{computation-3-1}, \eqref{computation-4},  \eqref{computation-5}, and \eqref{computation-6} yields 
\begin{equation}\label{computation-7}
\iint\limits_{\Rset^{d+1}_+} \frac{ \abs{\nabla_A U (x, t)}^p}{t^{1 - (1-s) p}} \dif x \dif t 
  \le 
    \C 
    \,
    \seminorm{ u }_{W^{s, p}_{A^{\shortparallel}}(\Rset^d)}^p 
    + 
    \C 
   \,
    \beta^\frac{sp}{2} 
    \norm{u}_{L^p(\Rset^d)}^p. 
\end{equation}

Similar to \eqref{computation-6}, we also have 
\begin{equation}\label{computation-8}
  \iint\limits_{\Rset^{d+1}_+} \frac{ \abs{U (x, t)}^p}{t^{1 - (1-s) p}} \dif x \dif t \le \C 
  \frac{\norm{u}_{L^p(\Rset^d)}^p}{\beta^{\frac{(1 - s)p}{2}}}.
\end{equation}
The conclusion now follows from \eqref{computation-7} and \eqref{computation-8}. 
\end{proof}

\begin{remark} 
 If in the proof of \cref{proposition_Bdd_dA_extension}, one takes \(a = 1\), then the following estimate holds: 
\begin{equation*}
   \iint\limits_{\Rset^{d + 1}_+}
      \frac{\abs{\nabla_A U (x, t)}^p + \abs{U (x, t)}^p}
      {t^{1 - (1 - s)p}} \dif x \dif t\\
  \le 
  C
  \Bigl( \seminorm{u}_{W^{s, p}_{A^{\shortparallel}}(\Rset^d)}^p 
  +  \bigl( 1+ \norm{dA}_{L^\infty}^\frac{p}{2}  \bigr) \norm{u}_{L^p(\Rset^d)}^p  \Bigr).
\end{equation*}
\end{remark}

We also have a local version of \cref{proposition_Bdd_dA_extension}.

\begin{proposition}
 \label{proposition_Bdd_dA_extension_local}
 Let \(d \ge 1\), \(0 < s < 1\) and \(1 \le p < +\infty\).
 There exists a constant \(C_{d, s, p}\) such that for every \(u \in C^\infty (B (0, 2R), \Cset)\) and every \(A 
 \in C^1 (\Bar{B}_{2 R} \times [0, R], \bigwedge^1 \Rset^{d + 1})\) such that \(\norm{dA}_{L^\infty (B(0, 2R) \times [0, R])} + \frac{1}{R^2} \le \beta\), there exists
 \(U \in C^\infty (B (0, R) \times (0, R), \Cset)\) such that \(U (\cdot, 0) = u (\cdot)\) on \(B (0, R)\)
and  
 \begin{multline*}
   \iint\limits_{B (0, R) \times (0, R)}
   \frac{\abs{\nabla_A U (x, t)}^p}
   {t^{1 - (1 - s)p}} \dif x \dif t\\[-1em]
   \le 
   C_{d, s, p}
   \Biggl(\; 
   \iint\limits_{B(0, 2R) \times B (0, 2R)}
   \!\!\!
   \frac{\bigabs{e^{i\,\intpot{A^\shortparallel} (x, y)} u (y) - u (x)}^p}
   {\abs{x - y}^{d + sp}}
   \dif x 
   \dif y
   + \beta ^\frac{sp}{2}
   \int_{B(0, 2R)} \abs{u}^p
   \Biggr)
 \end{multline*}
 and
 \begin{equation*}
   \int_{B (0, R) \times (0, R)} 
   \frac{\abs{U (z, t)}^p}{t^{1 - (1 - s)p}}
   \dif t \dif z
   \le 
   \frac{C_{d, s, p}}{\beta^\frac{(1 - s)p}{2}}
   \int_{B (0, 2R)}
   \abs{u}^p.
 \end{equation*}
\end{proposition}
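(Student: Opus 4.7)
The strategy is to adapt the extension formula \eqref{eq_def_extension_Lip} used in the proof of \cref{proposition_Bdd_dA_extension} to the local setting, exploiting the hypothesis \(\beta \ge 1/R^2\) to ensure that the cutoff scale \(a = \beta^{-1/2}\) satisfies \(a \le R\). Concretely, the plan is to define, for \((x,t) \in B(0,R) \times [0, R]\),
\begin{equation*}
U(x,t) \defeq \theta(t) \int_{\Rset^d} \varphi_t(x - y)\, e^{i\,\intpot{A}((x,t), y)}\, u(y) \dif y,
\end{equation*}
using cutoff \(\varphi \in C^\infty_c (B(0,1))\) with \(\int \varphi = 1\) and \(\theta \in C^\infty (\Rset)\) equal to \(1\) near \(0\) and vanishing outside \((-a,a)\), with \(|\theta'| \le C/a\) and \(a = \beta^{-1/2}\). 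The point is that for \(x \in B(0,R)\) and \(t \le R\), the support of \(\varphi_t (x - \cdot)\) lies in \(B(x, t) \subset B(0, 2R)\), so the definition only involves values of \(u\) and \(A\) on the prescribed domain; the identity \(U(\cdot, 0) = u\) on \(B(0,R)\) follows from \(\theta(0) = 1\), \(\int \varphi = 1\), and \(\intpot{A}((x,0), y) = \intpot{A^\shortparallel}(x,y)\) (after identifying \(y\) with \((y,0)\)).

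Next, I would reproduce the computation leading to \eqref{computation-3-1} verbatim: differentiating under the integral and using \eqref{eq_deriv_pot} together with the subtraction-rebuilding trick in \eqref{computation-2}, one obtains \(|\nabla_A U(x,t)| \le C (L_1 + L_2 + L_3)(x,t)\) where now, because \(t \le a \le R\) on the support of \(\theta\),
\begin{align*}
L_1(x,t) &\le \frac{\mathds{1}_{(0,a)}(t)}{t^{d+1}} \int_{B(x,t) \cap B(0,2R)} \bigabs{e^{i\,\intpot{A^\shortparallel}(x,y)} u(y) - u(x)} \dif y,\\
L_2(x,t) &\le \frac{\mathds{1}_{(0,a)}(t)}{t^{d+1}} \int_{B(x,t) \cap B(0,2R)} \bigabs{e^{i(\intpot{A}((x,t),y) + \intpot{A^\shortparallel}(y,x) + \intpot{A}(x,(x,t)))} - 1} \abs{u(y)} \dif y,\\
L_3(x,t) &\le \frac{\mathds{1}_{(0,a)}(t)}{a\, t^d} \int_{B(x,t) \cap B(0,2R)} \abs{u(y)} \dif y,
\end{align*}
and the crucial point is that all the balls \(B(x,t)\) appearing here are contained in \(B(0, 2R)\), so localizing the integrals does not lose anything.

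The estimates on \(L_1\), \(L_2\), \(L_3\) then proceed exactly as in \eqref{computation-4}, \eqref{computation-5}, \eqref{computation-6}: for \(L_1\) Fubini yields the localized Gagliardo seminorm on \(B(0, 2R) \times B(0, 2R)\); for \(L_2\) the three-point Stokes-type identity \cref{lem-Magnetic} gives \(|\cdots| \le C \beta^{1/2} t\), producing \(\beta^{sp/2} \int_{B(0,2R)} |u|^p\) after integrating \(t\) from \(0\) to \(a = \beta^{-1/2}\); for \(L_3\) the same integration in \(t\) gives the factor \(a^{-p} \cdot a^{1 + (1-s)p} = a^{1 - sp} = \beta^{(sp-1)/2}\), which after the \(1/a^p\) already present combines to \(\beta^{sp/2}\). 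The weighted \(L^p\)-estimate on \(U\) itself follows as in \eqref{computation-8} by the pointwise bound \(|U(x,t)| \le C \fint_{B(x,t)} |u|\) combined with Fubini and the fact that \(\int_0^a t^{(1-s)p - 1} \dif t \asymp a^{(1-s)p} = \beta^{-(1-s)p/2}\).

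The only conceptual step beyond copying the global argument is verifying that \(a \le R\); this is precisely what the assumption \(\beta \ge 1/R^2\) buys us, and without it the cutoff \(\theta\) would not be compactly supported in \((-R, R)\) and one could not localize the estimates to \(B(0, R) \times (0, R)\). I do not anticipate any serious obstacle beyond careful bookkeeping of the domains of integration.
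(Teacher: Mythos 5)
Your proposal is correct and follows essentially the same route the paper implicitly intends: the paper states \cref{proposition_Bdd_dA_extension_local} as a ``local version'' of \cref{proposition_Bdd_dA_extension} without spelling out the proof, and your argument fills in exactly the right gap, in particular identifying that \(\beta \ge 1/R^2\) forces \(a = \beta^{-1/2} \le R\) so that the cutoff \(\theta\) is supported in \((-R,R)\) and that \(B(x,t) \subset B(0,2R)\) for \(x \in B(0,R)\), \(t < R\), so the definition and all estimates localize cleanly. (There is a small arithmetic slip in your \(L_3\) bookkeeping --- the \(t\)-integral gives \(a^{(1-s)p}\), not \(a^{1+(1-s)p}\), and combined with the \(a^{-p}\) from \(L_3^p\) this directly yields \(a^{-sp} = \beta^{sp/2}\) without any further ``combining'' --- but the conclusion is the same.)
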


\section{Characterizations of trace spaces}\label{sect-characterization}

For \(\gamma \in \Rset\), we define the weighted space
\begin{equation}\label{def-WA-gamma}
  W^{1, p}_{A, \gamma} (\Rset^{d + 1}_+) \defeq  \left\{ u \in W^{1, 1}_{\mathrm{loc}} (\Rset^{d +1}_+, \Cset)
\st  \Vert U\Vert_{W^{1, p}_{A, \gamma} (\Omega)} < +\infty \right\}, 
\end{equation}
where
\[ \Vert U\Vert_{W^{1, p}_{A, \gamma} (\Omega)}^p
\defeq 
\iint\limits_{\Rset^{d + 1}_+} \Bigl(\abs{U (x, t)}^p +  \abs{\nabla_A U (x, t)}^p \Bigr) t^\gamma \dif x \dif t
\]
It is standard to check that the space  \(W^{1, p}_{A, \gamma}(\Rset^{d+1}_+)\) is complete. We also have the following density result: 

\begin{lemma} \label{lem-B-1} Let  \(1 \le p < +\infty\), \(\gamma \in \Rset\) and \(A \in C(\Rset^{d + 1}_+, \Rset^{d + 1})\). If \(1 - p < \gamma < 1\), then the space  \(C^{\infty}_{c}(\overline {\Rset^{d+1}_+})\) is dense in \(W^{1, p}_{A, \gamma}(\Rset^{d+1}_+)\). 
\end{lemma}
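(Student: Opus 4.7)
The plan is to approximate a given $U \in W^{1,p}_{A,\gamma}(\Rset^{d+1}_+)$ by the classical three-step regularization scheme: spatial truncation, vertical translation away from the boundary, and mollification by convolution on $\Rset^{d+1}$. For truncation, pick $\zeta \in C^\infty_c(\Rset^{d+1})$ equal to $1$ on $B_1$ and vanishing outside $B_2$, and set $\zeta_R(X) := \zeta(X/R)$. Dominated convergence yields $\zeta_R U \to U$ in $L^p(t^\gamma)$; the identity $\nabla_A(\zeta_R U) = \zeta_R \nabla_A U + U \, \nabla \zeta_R$ together with $\norm{\nabla \zeta_R}_{L^\infty} \le C/R$ shows that $\nabla_A(\zeta_R U) \to \nabla_A U$ in $L^p(t^\gamma)$ as well, so we may assume $U$ is compactly supported in $\overline{\Rset^{d+1}_+}$.

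For the second and third steps, fix $\delta > 0$ and set $U^\delta(x, t) := U(x, t + \delta)$, which lives on $\{t > -\delta\}$. Starting from the fundamental theorem of calculus and H\"older's inequality,
\[ |U^\delta(x, t) - U(x, t)|^p \le \delta^{p-1} \int_t^{t + \delta} |\partial_{d+1} U(x, s)|^p \, ds, \]
and applying Fubini's theorem reduces the convergence $U^\delta \to U$ in $W^{1,p}_{A,\gamma}$ to the control of $\int_{(s - \delta)_+}^s t^\gamma \, dt$ against $\delta \, s^\gamma$; this is carried out under the hypothesis $1 - p < \gamma < 1$ and yields a bound proportional to $\delta$ times $\norm{\partial_{d+1} U}_{L^p(s^\gamma)}$. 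The magnetic discrepancy $\nabla_A U^\delta - (\nabla_A U)^\delta = i \, (A(\cdot, t) - A(\cdot, t + \delta)) U^\delta$ tends to zero in the weighted norm by uniform continuity of $A$ on the compact support. Finally, with $\rho_\epsilon$ a standard mollifier on $\Rset^{d+1}$ supported in $B_\epsilon$, set $V_{\delta, \epsilon} := \rho_\epsilon * U^\delta$; for $\epsilon < \delta/2$ this is smooth and compactly supported in $\overline{\Rset^{d+1}_+}$. Since $t^\gamma$ is locally bounded on the relevant region (away from $\{t = -\delta\}$), classical mollifier convergence in $L^p$ upgrades to $L^p(t^\gamma)$-convergence, and the magnetic commutator
\[ \nabla_A V_{\delta, \epsilon} - \rho_\epsilon * \nabla_A U^\delta = i A \cdot (\rho_\epsilon * U^\delta) - i \, \rho_\epsilon * (A \, U^\delta) \]
vanishes in $L^p_{\mathrm{loc}}$ as $\epsilon \to 0$ by continuity of $A$. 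A diagonal argument ($\epsilon \to 0$ first, then $\delta \to 0$) then gives the desired approximation in $C^\infty_c(\overline{\Rset^{d+1}_+})$.

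The main obstacle lies in the translation step: controlling the weighted norm when the weight $t^\gamma$ is singular at $\{t = 0\}$. The range $1 - p < \gamma < 1$ is precisely the threshold that makes the trade-off between the $\delta^{p - 1}$ gain from H\"older's inequality and the loss from integrating $t^\gamma$ over a $\delta$-thick window favorable, producing an $o(1)$-bound on $\norm{U^\delta - U}_{W^{1,p}_{A,\gamma}}$ as $\delta \to 0$. The other point worth flagging is the commutator estimate for the magnetic potential, which is only assumed continuous; this is handled routinely via the density of continuous functions of compact support combined with the standard mollifier bound.
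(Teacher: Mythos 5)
Your strategy (truncate, translate off the boundary, mollify) is the natural direct route and genuinely differs from the paper's: the paper truncates and then \emph{cites} known density results for weighted Sobolev spaces with Muckenhoupt weights (Miller, Muckenhoupt--Wheeden, Turesson), finishing with a diagonal argument using local boundedness of \(A\). In principle your constructive approach is attractive because it avoids those references, but as written it has a genuine gap precisely where the cited results do the work: handling the possibly singular weight \(t^\gamma\) near \(\{t = 0\}\). The hypothesis \(1 - p < \gamma < 1\) allows \(\gamma < 0\) as soon as \(p > 1\), and then \(t^\gamma\) blows up as \(t \to 0^+\), so the norm \(\norm{\cdot}_{L^p(t^\gamma)}\) is \emph{strictly stronger} than \(\norm{\cdot}_{L^p}\) near the boundary.

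Two steps break down under this observation. First, in the translation step you control \(\norm{U^\delta - U}_{L^p(t^\gamma)}\) via the fundamental theorem of calculus and the commutator \(\nabla_A U^\delta - (\nabla_A U)^\delta\) via uniform continuity of \(A\), but you never address the remaining piece \((\nabla_A U)^\delta - \nabla_A U\). This is translation of the fixed function \(\nabla_A U\) (to which the FTC argument does not apply, since you have no control of second derivatives of \(U\)), and translation continuity in \(L^p(t^\gamma)\) is not a free standing fact when \(\gamma < 0\): the operator \(f \mapsto f(\cdot + \delta)\) is not even bounded from \(L^p((0,\infty), s^\gamma)\) to \(L^p((0,\infty), t^\gamma)\), because \(\int_\delta^\infty \abs{f(s)}^p (s-\delta)^\gamma \, ds\) can be infinite for \(f \in L^p(s^\gamma)\) (the shifted weight \((s - \delta)^\gamma\) is singular at \(s = \delta\), where \(s^\gamma\) is not). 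Second, your justification of the mollification step — ``\(t^\gamma\) is locally bounded on the relevant region (away from \(\{t = -\delta\})\)'' — is simply false for \(\gamma < 0\): the integration domain is \(\{t > 0\}\), and \(t^\gamma\) is unbounded at \(t = 0\), which \emph{is} away from \(\{t = -\delta\}\). Since \(t^\gamma \ge 1\) near \(t = 0\) in this regime, classical \(L^p\)-convergence of \(\rho_\epsilon * U^\delta\) does \emph{not} upgrade to \(L^p(t^\gamma)\)-convergence. (You may have silently confused the weight \(t^\gamma\) on the target domain with the translated weight \((t + \delta)^\gamma\), which indeed \emph{is} bounded for \(t > -\delta/2\); but it is \(t^\gamma\), not \((t + \delta)^\gamma\), that appears in the norm of \(W^{1,p}_{A,\gamma}(\Rset^{d+1}_+)\).) Both difficulties are exactly what the Muckenhoupt \(A_p\) machinery is designed to handle; if you want a self-contained proof you would need to supply an argument that dominates the translated/mollified family by a fixed function in \(L^p(t^\gamma)\) (e.g.\ via a weighted maximal function estimate), which is not routine and is omitted here. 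For \(p = 1\), where necessarily \(\gamma \in (0,1)\), your argument does go through.
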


\begin{proof}
  \resetconstant
The proof of the completeness is standard. For the density of smooth maps, we first observe that if \(\chi \in C^\infty_c (\Rset^{d + 1})\), $0 \le \chi \le 1$,  and \(\chi = 1\) on \(B (0, 1)\) and if we define for each \(\lambda >0\) the function \(\chi_\lambda : \Rset^{d + 1} \to \Rset\) for \(x \in \Rset^d\) by \(\chi_{\lambda} (x) = \chi (x/\lambda)\),
then 
\(
 \nabla_A (U - \chi_\lambda U) = - U \nabla \chi_\lambda + (1 - \chi) \nabla_A U. 
\)
It follows that \begin{multline*}
\norm{U - \chi_{\lambda} U}_{W^{1, p}_{A, \gamma} (\Rset^d)}^p\\
\le \C \iint\limits_{\Rset^{d + 1}_+} 
\Bigl( \big(1 - \chi_\lambda (x, t) \big)^p \big(\abs{U (x, t)}^p + \abs{\nabla_A U (x, t)}^p\big) + 
  \abs{\nabla \chi_\lambda}^p \abs{U (x, t)}^p\Bigr) t^\gamma \dif x \dif t \to 0,
\end{multline*}
as \(\lambda \to \infty\), since \(\abs{\nabla \chi_\lambda} \le \C/\lambda\). Functions in \(W^{1, p}_{A, \gamma} (\Rset^{d + 1}_+)\) with bounded support are thus dense in \(W^{1, p}_{A, \gamma} (\Rset^{d + 1}_+)\).

Since \(1 - p < \gamma < 1\), any compactly supported can be approximated in \(W^{1, p}_{0, \gamma} (\Rset^{d + 1}_+)\)
by smooth functions with bounded support \citelist{\cite{Miller_1982}*{Lemma 2.4}\cite{Muckenhoupt_Weeden_1978}*{Lemma 8}\cite{Turesson_2000}*{Corollary 2.1.5}}
(the condition ensures that the weight \((x,t) \mapsto t^\gamma\) satisfies Muckenhoupt's \(A_p\) condition for \(p \in [1, +\infty)\) given by the assumptions) 
\citelist{\cite{Muckenhoupt_1972}\cite{Coifman_Fefferman_1974}}.
Since the function \(A\) is locally bounded on \(\Rset^{d + 1}\), this implies that any bounded supported function in \(W^{1, p}_{A, \gamma} (\Rset^{d + 1}_+)\) can be approximated in \(W^{1, p}_{0, \gamma} (\Rset^{d + 1}_+)\)
by smooth functions with bounded support and the conclusion follows by a diagonal argument.
\end{proof}

It is also standard to check that the space \(W^{s, p}_{A^\shortparallel} (\Rset^{d})\) is complete and thus is a Banach space. We also have the following density result: 

\begin{lemma}\label{lem-B-2} Let \(0< s < 1\) and \(p \ge 1\). If \(A^\shortparallel \in C^1(\Rset^d, \Rset^d)\), then the space \(C^{\infty}_{c} (\Rset^{d})\) is dense in \(W^{s, p}_{A^\shortparallel} (\Rset^{d})\). 
\end{lemma}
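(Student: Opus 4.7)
The plan is to follow the same two-step scheme as in the proof of \cref{lem-B-1}: first, approximate an arbitrary $u \in W^{s,p}_{A^\shortparallel}(\Rset^d)$ by functions with bounded support using cut-offs $\chi_\lambda(x) = \chi(x/\lambda)$ built from some fixed $\chi \in C^\infty_c(\Rset^d)$ with $\chi \equiv 1$ on $B_1$; second, approximate each bounded-support element of $W^{s,p}_{A^\shortparallel}(\Rset^d)$ by functions in $C^\infty_c(\Rset^d)$ via mollification, exploiting the local boundedness of $A^\shortparallel$ to reduce to the classical Gagliardo setting.

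For the truncation step, I would start from the algebraic identity
\[
  e^{i\,\intpot{A^\shortparallel}(x,y)}\bigl[(1-\chi_\lambda) u\bigr](y) - \bigl[(1-\chi_\lambda) u\bigr](x) = (1-\chi_\lambda(x))\bigl[e^{i\,\intpot{A^\shortparallel}(x,y)} u(y) - u(x)\bigr] + e^{i\,\intpot{A^\shortparallel}(x,y)} u(y)\bigl[\chi_\lambda(x) - \chi_\lambda(y)\bigr].
\]
Raising to the $p$-th power and integrating against $\abs{y-x}^{-(d+sp)}$ splits $\seminorm{u - \chi_\lambda u}_{W^{s,p}_{A^\shortparallel}(\Rset^d)}^p$ into two pieces. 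The first tends to zero by dominated convergence: its integrand is dominated by that of $\seminorm{u}_{W^{s,p}_{A^\shortparallel}(\Rset^d)}^p$ and $(1-\chi_\lambda(x))^p \to 0$ pointwise. The second equals $\int_{\Rset^d}\abs{u(y)}^p\bigl(\int_{\Rset^d}\abs{\chi_\lambda(x)-\chi_\lambda(y)}^p\abs{x-y}^{-(d+sp)}\,\dif x\bigr)\dif y$; using $\abs{\chi_\lambda(x)-\chi_\lambda(y)} \le \min(2,C\abs{x-y}/\lambda)$ and splitting the inner integral at $\abs{x-y}=\lambda$ gives a uniform-in-$y$ bound of order $\lambda^{-sp}$, so this contribution vanishes. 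Combined with $\chi_\lambda u \to u$ in $L^p(\Rset^d)$, this yields density of the functions with bounded support.

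For the mollification step, let $v \in W^{s,p}_{A^\shortparallel}(\Rset^d)$ be supported in $B_R$. Since $A^\shortparallel \in C^1(\Rset^d,\Rset^d)$, it is bounded on $B_{2R}$, so $\abs{\intpot{A^\shortparallel}(x,y)} \le C_R \abs{y-x}$ whenever $x,y \in B_{2R}$. Using $\abs{e^{i\theta}-1}\le\abs{\theta}$ and the triangle inequality in both directions, I would establish the two-sided estimate
\[
  \bigabs{e^{i\,\intpot{A^\shortparallel}(x,y)} w(y) - w(x)}^p \le C\bigabs{w(y)-w(x)}^p + C\, C_R^p\abs{y-x}^p\abs{w(y)}^p
\]
valid for every $w$ supported in $B_R$ and $x,y \in B_{2R}$, together with its reverse. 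Since $\int_{\abs{y-x}<4R}\abs{y-x}^{p(1-s)-d}\,\dif x < \infty$ (recall $s<1$), and since on the tail $(x,y)\notin B_{2R}\times B_{2R}$ at least one of $w(x),w(y)$ vanishes and $\abs{x-y}$ is comparable to $\max(\abs{x},\abs{y})$, the seminorms $\seminorm{\cdot}_{W^{s,p}_{A^\shortparallel}(\Rset^d)}$ and $\seminorm{\cdot}_{W^{s,p}(\Rset^d)}$ are equivalent modulo $\norm{\cdot}_{L^p(\Rset^d)}$ on functions supported in $B_R$. In particular $v \in W^{s,p}(\Rset^d)$, so the standard mollifications $v_n \defeq \rho_{1/n} * v$ lie in $C^\infty_c(\Rset^d)$, have supports in the fixed ball $B_{R+1}$, and converge to $v$ in $W^{s,p}(\Rset^d)\cap L^p(\Rset^d)$, hence in $W^{s,p}_{A^\shortparallel}(\Rset^d)$ by the equivalence; a diagonal argument concludes.

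The main technical obstacle is the truncation step, and specifically the uniform bound $\sup_{y\in\Rset^d}\int_{\Rset^d}\abs{\chi_\lambda(x)-\chi_\lambda(y)}^p\abs{x-y}^{-(d+sp)}\,\dif x\le C\lambda^{-sp}$: both the gradient scaling $\abs{\nabla\chi_\lambda}\lesssim 1/\lambda$ and the condition $s<1$ (which makes $\int_0^\lambda r^{p-1-sp}\,\dif r$ convergent) are essential. The mollification step, by contrast, is routine once the norm equivalence on compactly supported functions is in hand.
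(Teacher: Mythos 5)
Your proof is correct and follows essentially the same two-step scheme as the paper: truncation by $\chi_\lambda$ controlled through a decomposition that isolates the commutator term $\chi_\lambda(x) - \chi_\lambda(y)$, with the key uniform bound $\sup_y\int_{\Rset^d}\abs{\chi_\lambda(x)-\chi_\lambda(y)}^p\abs{x-y}^{-(d+sp)}\dif x \lesssim \lambda^{-sp}$, followed by mollification exploiting the local boundedness of $A^\shortparallel$ to transfer to the unmagnetized Gagliardo seminorm. The only cosmetic difference is that you use the asymmetric cut-off identity where the paper symmetrizes via $\frac{\chi_\lambda(x)+\chi_\lambda(y)}{2}$; both lead to the same estimates.
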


\begin{proof} 
  \resetconstant
First we observe that if \(u \in W^{s, p}_{A^\shortparallel}(\Rset^d)\) is arbitrary, \(\chi \in C^\infty_c(\Rset^d)\) is chosen with \(0 \le \chi \le 1\) and \(\chi = 1\) for \(x \in B (0, 1)\), and \(\chi_\lambda (x)  \defeq \chi (x/ \lambda)\),
then we have for each \(x, y \in \Rset^d\) and \(\lambda > 0\),
\begin{multline*}
    \bigl(
      1 - \chi_\lambda (y)
    \bigr)
    e^{i\,\intpot{A^\shortparallel} (x, y)}  u (y)
  - 
    \bigl(
      1 - \chi_{\lambda} (x)
    \bigr) 
    u (x)
    \\
=
  \bigl(1 - \tfrac{\chi_\lambda (x) + \chi_{\lambda} (y)}{2}\bigr) 
  \bigl(
      e^{i\,\intpot{A^\shortparallel} (x, y)} u (y)
    -  
      u (x)
  \bigr)
+ 
  \tfrac{\chi_{\lambda} (x) - \chi_\lambda (y)}{2} 
  \Bigl(
      e^{i\,\intpot{A^\shortparallel} (x, y)} u (y)
    +  
      u (x)
  \Bigr)
  ,
\end{multline*}
and for every \(y \in \Rset^d\) and \(\lambda > 0\),
\[
\int_{\Rset^d} \frac{\abs{\chi_\lambda (x) - \chi_\lambda (y)}^p}{\abs{y - x}^{d + sp}} \dif x
\le \frac{\C}{\lambda^{sp}}. 
\]
It follows that,  for every \(\lambda > 0\), 
\begin{multline*}
    |
      \chi_\lambda u - u
    |_{W^{s, p}_{A^\shortparallel} (\Rset^{d})}^p\\
  \le 
  \C 
  \iint\limits_{\Rset^d\times \Rset^d} 
    \Bigl(1 - \tfrac{\chi_\lambda (x) +  \chi_{\lambda} (y)}{2}\Bigr)^p \frac{\bigabs{
  e^{i\,\intpot{A^\shortparallel} (x, y)} u (y)
  -  u (x)}^p}{\abs{y - x}^{d + sp}}\dif y \dif x
    +
\C
\int_{\Rset^d}
\frac{\abs{u (y)}^p}{\lambda^{sp}} \dif y,
\end{multline*}
By Lebesgue's dominated convergence theorem  we deduce that 
\begin{align*}
  |\chi_\lambda u - u|_{W^{s, p}_{A^\shortparallel} (\Rset^d)} &\to 0&
  &\text{ and } &
  \Vert\chi_\lambda u - u\Vert_{L^p (\Rset^d)} &\to 0 &
  &\text{ as } \lambda \to + \infty. 
\end{align*}
Hence functions with compact support are dense in \(W^{s, p}_{A^\shortparallel} (\Rset^d)\).

We conclude by observing that since \(A^\shortparallel\) is locally bounded, any function 
in \(W^{s, p}_{A^\shortparallel} (\Rset^d)\) with compact support is in  \(W^{s, p}_{0} (\Rset^d)\),
such functions can be approximated in  \(W^{s, p}_{0} (\Rset^d)\) by functions with uniformly compact support;
since the function \(A^\shortparallel\) is locally bounded, the approximating sequence also converges in \(W^{s, p}_{A^\shortparallel} (\Rset^d)\). 
The conclusion then follows by a diagonal argument.
\end{proof}

As a consequence of Propositions~\ref{proposition_Bdd_dA_trace}, \ref{proposition_Bdd_dA_extension},  \ref{proposition_Bdd_dA_trace},  and \ref{proposition_Bdd_dA_extension}, and Lemmas~\ref{lem-B-1} and \ref{lem-B-2}, we obtain 
\begin{theorem} 
  \label{theorem_trace_halfplane_weight}
  Let \(d \ge 1\), \(0 < s < 1\) and \(1 \le p < +\infty\).  
  If 
  \(A \in C^1(\overline{\Rset^{d + 1}_+},  \Rset^{d + 1})\) and \(\norm{d A}_{L^\infty (\Rset^{d + 1}_+)} \le \beta\), then there exists a trace mapping 
\(\operatorname{Tr}: W^{1, p}_{A, 1 - (1-s)p} (\Rset^{d+1}_+, \Cset) \to W^{s, p}_{A^{\shortparallel}}(\Rset^d)\)
such that  for some positive constant \(C_{d, s, p}\) depending only on \(d\), \(s\) and \(p\),
if  \(U \in C^1_c (\Rset^{d + 1}_+, \Cset)\), then \(\operatorname{Tr} U = U (\cdot, 0)\)
and for every \(U \in W^{1, p}_{A, 1 - (1 - s)p} (\Rset^{d + 1}_+)\), if \(u \defeq \operatorname{Tr} U\),
\begin{equation*} \label{extension-dA-constant-111-1}
  \seminorm{u}_{W^{s, p}_{A^{\shortparallel}}(\Rset^d)}^p 
  + \beta^\frac{sp}{2}
  \norm{u}_{L^p(\Rset^d)}^p  
  \le 
  C_{d, s, p}  
  \iint\limits_{\Rset^{d + 1}_+}
  \frac{\abs{\nabla_A U (x, t)}^p + \beta^\frac{p}{2} \abs{U (z, t)}^p}
  {t^{1 - (1 - s)p}} \dif x \dif t,
\end{equation*}
and there exists a linear continuous mapping 
\(
\operatorname{Ext}: W^{s, p}_{A^{\shortparallel}}(\Rset^d, \Cset) \to W^{1, p}_{A, 1 - (1 - s)p} (\Rset^{d+1}_+)
\) 
such that  \(\operatorname{Tr} \circ \operatorname{Ext}: W^{s, p}_{A^{\shortparallel}}(\Rset^d)  \to W^{s, p}_{A^{\shortparallel}}(\Rset^d) \) is the identity and such that for some positive constant \(C_{d, s, p}\) depending only on \(d\), \(s\) and \(p\), we have for each \(u \in W^{s, p}_{A^\shortparallel} (\Rset^d)\), if \(U \defeq \operatorname{Ext} u\),
\begin{equation*} 
  \label{extension-dA-constant-111-2}
   \iint\limits_{\Rset^{d + 1}_+}
   \frac{\abs{\nabla_A U (x, t)}^p + \beta^\frac{p}{2} \abs{U (z, t)}^p}
      {t^{1 - (1 - s)p}} \dif x \dif t\\
  \le 
  C_{d, s, p}
  \Bigl( \seminorm{u}_{W^{s, p}_{A^{\shortparallel}}(\Rset^d)}^p 
  + \beta^\frac{sp}{2}\norm{u}_{L^p(\Rset^d)}^p  \Bigr).
\end{equation*}
\end{theorem}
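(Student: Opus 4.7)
The plan is to deduce this theorem by a standard extension-by-density argument that upgrades the a priori estimates on smooth functions given by \cref{proposition_Bdd_dA_trace,proposition_Bdd_dA_extension} to the full Banach spaces by invoking the density results of \cref{lem-B-1,lem-B-2}. Before starting, I would consolidate the two estimates of \cref{proposition_Bdd_dA_trace} into the single form appearing in the theorem by applying Young's inequality \(a^{1-s} b^{s} \le (1-s)a + sb\) to
\[
a \defeq \iint_{\Rset^{d+1}_+} \frac{\abs{\nabla_A U}^p}{t^{1-(1-s)p}}\dif x\dif t, \qquad b \defeq \beta^{p/2} \iint_{\Rset^{d+1}_+} \frac{\abs{U}^p}{t^{1-(1-s)p}}\dif x\dif t,
\]
which, after multiplying \eqref{pro-P1-state2} by \(\beta^{sp/2}\), gives the trace estimate stated in the theorem on smooth compactly supported functions. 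In the same spirit, I would combine the two bounds of \cref{proposition_Bdd_dA_extension} by multiplying the \(L^p\)-type bound by \(\beta^{p/2}\) (using \(\beta^{p/2 - (1-s)p/2} = \beta^{sp/2}\)) and adding, to obtain the extension estimate of the theorem.

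For the trace operator, given \(U \in W^{1,p}_{A, 1-(1-s)p}(\Rset^{d+1}_+)\), I would invoke \cref{lem-B-1} (whose hypothesis \(1-p < \gamma < 1\) is satisfied by \(\gamma = 1-(1-s)p\) since \(0 < s < 1\)) to obtain a sequence \(\{U_n\} \subset C^\infty_c(\overline{\Rset^{d+1}_+})\) approximating \(U\). Applying the consolidated trace estimate to the linear differences \(U_n - U_m\) shows that \(\{U_n(\cdot, 0)\}\) is a Cauchy sequence in the complete space \(W^{s,p}_{A^\shortparallel}(\Rset^d)\); its limit defines \(\operatorname{Tr} U\), and the customary check shows this is independent of the chosen approximating sequence and agrees with the classical restriction when \(U\) is smooth. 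Passing to the limit yields the trace estimate on all of \(W^{1,p}_{A, 1-(1-s)p}(\Rset^{d+1}_+)\).

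For the extension, given \(u \in W^{s,p}_{A^\shortparallel}(\Rset^d)\), I would use \cref{lem-B-2} to approximate by \(\{u_n\} \subset C^\infty_c(\Rset^d)\) and apply \cref{proposition_Bdd_dA_extension} to each \(u_n\). The essential observation here is that the extension formula \eqref{eq_def_extension_Lip} is linear in \(u\), so setting \(U_n\) equal to the extension produced by \eqref{eq_def_extension_Lip} gives a linear assignment \(u_n \mapsto U_n\). The consolidated extension estimate applied to \(u_n - u_m\) then makes \(\{U_n\}\) Cauchy in \(W^{1,p}_{A, 1-(1-s)p}(\Rset^{d+1}_+)\), whose limit defines \(\operatorname{Ext} u\). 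Continuity of \(\operatorname{Ext}\) follows by passing to the limit in the extension estimate.

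Finally, the identity \(\operatorname{Tr} \compose \operatorname{Ext} = \mathrm{Id}\) is obtained by noting that for \(u_n \in C^\infty_c(\Rset^d)\), one has \(\operatorname{Tr} (\operatorname{Ext} u_n) = u_n\) by construction of the extension \eqref{eq_def_extension_Lip} (since \(\theta(0) = 1\) and \(\varphi_t\) is a mollifier as \(t \to 0\), although one directly has \(U_n(x,0) = u_n(x)\) from the formula); by continuity of both operators, this identity extends to all of \(W^{s,p}_{A^\shortparallel}(\Rset^d)\). The only non-routine step is the Young's inequality combination of \eqref{pro-P1-state1}-\eqref{pro-P1-state2}; everything else is a textbook density-and-completion argument, and the main subtlety I expect is verifying that the trace of the approximating sequence is indeed the pointwise restriction (guaranteed here by the smoothness assumptions in \cref{proposition_Bdd_dA_trace}).
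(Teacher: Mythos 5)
Your proposal is correct and follows the same route as the paper: the paper derives \cref{theorem_trace_halfplane_weight} as a direct consequence of Propositions~\ref{proposition_Bdd_dA_trace} and \ref{proposition_Bdd_dA_extension} together with the density Lemmas~\ref{lem-B-1} and \ref{lem-B-2}, which is exactly the density-and-completion argument you spell out. The Young's-inequality consolidation of \eqref{pro-P1-state1} and \eqref{pro-P1-state2} that you identify as the only non-routine step is in fact already noted in the paper in the unnumbered display immediately following \cref{proposition_Bdd_dA_trace}.
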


\section{Constant magnetic field on the half-space}

We begin with an improvement of  \cref{proposition_Bdd_dA_trace} in the case where the magnetic field \(dA\) is constant. 

\begin{proposition}
  \label{proposition_trace_estimate-1}
  Let \(d \ge 1\), \(0 < s < 1\), and \(1 \le p < + \infty\).  
  There exists a constant \(C_{d, s, p}> 0\) such that if
  \(A \in C^1 (\Rset_+^{d + 1}, \Rset^{d + 1})\) with constant  \(dA\), \(U \in C^1_c(\overline{\Rset^{d+1}_+})\) and \(u = U (\cdot, 0)\), then 
  \begin{equation*}
    \seminorm{u}_{W^{s, p}_{A^{\shortparallel}} (\Rset^d)}^p 
    + \norm{dA}^{\frac{sp}{2}} \int_{\Rset^d} \abs{U (x, 0)}^p \dif x
    \le 
    C_{d, s, p}
    \iint\limits_{\Rset^{d + 1}_+}
    \frac{\bigabs{\nabla_A U (z, t)}^p}{t^{1 + (1 - s)p}} 
    \dif z \dif t.
  \end{equation*}
\end{proposition}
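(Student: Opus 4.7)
The plan is to refine the argument of \cref{proposition_Bdd_dA_trace} by exploiting two consequences of the constant-\(dA\) hypothesis. First, the identity of \cref{lem-Magnetic} collapses to the exact, \(Z\)-affine formula \(\Omega(X,Y,Z) \defeq \intpot{A}(X,Y) + \intpot{A}(Y,Z) + \intpot{A}(Z,X) = \tfrac{1}{2}\, dA[Y-X,\, Z-X]\). Second, the commutator relation \([\partial_{A,i},\,\partial_{A,j}] = i\,(dA)_{ij}\) provides, for constant magnetic field, a magnetic Heisenberg/Landau-type uncertainty principle bounding \(\norm{dA}^{1/2}\,\norm{U}_{L^p}\) by \(\norm{\nabla_A U}_{L^p}\) on \(\Rset^{d+1}\) for \(1<p<\infty\); this can be seen after a gauge reduction (e.g.~symmetric gauge \(A=\tfrac{1}{2}dA\cdot X\)) to a direct sum of Landau Hamiltonians whose spectral gap is \(\norm{dA}\). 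This second ingredient is precisely what is needed to absorb the residual \(\norm{dA}^{p/2}\abs{U}^p\) term that appeared on the right-hand side of \cref{proposition_Bdd_dA_trace}.

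The first step is an averaged refinement of the pointwise \cref{lem-E-2}. Rather than choosing a single auxiliary \(Z\), I integrate the three-term identity underlying that lemma against a smooth probability density \(\varphi(Z)\) supported in a ball of radius \(\sim\abs{y-x}\) around \(Z_0 = \bigl((x+y)/2,\, \abs{y-x}\bigr)\). Because \(\Omega(x,y,\cdot)\) is affine in \(Z\), the residual term
\begin{equation*}
\int \varphi(Z)\, e^{i\,\intpot{A}(x,Z)}\bigl(e^{i\,\Omega(x,y,Z)} - 1\bigr)\, U(Z)\, \dif Z
\end{equation*}
rewrites \emph{exactly} as a magnetic difference \(e^{i\,\intpot{A}(x,y)}\,\tilde u(y) - \tilde u(x)\), where \(\tilde u(x) \defeq \int \varphi(Z)\, e^{i\,\intpot{A}(x,Z)}\, U(Z)\, \dif Z\) is a gauge-covariant average. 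This is the key gain: the pointwise \(\abs{U(Z)}\) contribution that polluted the trace estimate in \cref{proposition_Bdd_dA_trace} has been replaced by an averaged magnetic increment whose \(L^p\) mass is itself controlled, via a further application of the identity \eqref{mvl-A}, by \(\nabla_A U\) on the scale \(\abs{y-x}\).

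The second step is to integrate the resulting pointwise bound in \(L^p\) using the change of variables \((\eta,\xi) = \bigl((x+y)/2,\, y-x\bigr)\) followed by polar coordinates in \(\xi\), exactly as in the proof of \cref{proposition_Bdd_dA_trace}. This yields the required bound of \(\seminorm{u}_{W^{s,p}_{A^\shortparallel}}^p\) by a weighted integral of \(\abs{\nabla_A U}^p\), now without any \(\abs{U}^p\) term. For the additional left-hand side contribution \(\norm{dA}^{sp/2}\,\norm{u}_{L^p}^p\), I would combine the diamagnetic inequality \(\abs{\nabla \abs{U}} \le \abs{\nabla_A U}\) with the fundamental theorem of calculus \(\abs{u(x)} \le \abs{U(x,\lambda)} + \int_0^\lambda \abs{\nabla_A U(x,\tau)}\, \dif\tau\), apply H\"older's inequality and optimize over \(\lambda>0\) as in the derivation of \eqref{pro-P1-state2}, and finally trade any remaining weighted \(L^p\)-norm of \(U\) for a weighted \(L^p\)-norm of \(\nabla_A U\) via the Heisenberg inequality.

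The main obstacle is the weighted \(L^p\) magnetic Heisenberg-Landau inequality for general \(p \neq 2\). For \(p=2\) it is immediate from the Landau ground-state energy, but for \(1<p<\infty\) it requires either a semigroup/interpolation argument on the magnetic heat kernel or a direct commutator estimate against dual test functions, and one must additionally check compatibility with the half-space weight \(t^{1-(1-s)p}\) and with the boundary \(\{t=0\}\). Controlling these two points in a gauge-invariant way is where I would expect the bulk of the technical difficulty to concentrate.
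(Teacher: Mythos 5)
Your Step~1, as written, does not actually eliminate the \(|U|^p\)-term. The algebraic rewriting is indeed exact (and in fact holds for any \(\Omega\), not because \(\Omega\) is affine in \(Z\)): the averaged residual is precisely \(e^{i\,\intpot{A}(x,y)}\tilde u(y) - \tilde u(x)\) with \(\tilde u(X) = \int \varphi(Z)\, e^{i\,\intpot{A}(X,Z)} U(Z)\, dZ\). But when you then apply \eqref{mvl-A} to this increment, you produce \(\nabla_A \tilde u\), \emph{not} \(\nabla_A U\). Differentiating under the integral sign, one finds for constant \(dA\) that \((\nabla_A \tilde u)_j(X) = \frac{i}{2}\int \varphi(Z)\, dA[X - Z, e_j]\, e^{i\,\intpot{A}(X,Z)} U(Z)\, dZ\) (the would-be identity \(D\intpot{A}(\cdot, Y) = -A\) fails as a full gradient identity when \(dA \neq 0\); only the directional derivative along the segment holds, and the transverse derivatives carry a \(dA\)-correction). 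Thus \(|\nabla_A \tilde u| \lesssim \|dA\|\, |y - x| \cdot \text{avg}|U|\) when \(\varphi\) is supported at scale \(|y-x|\), and the magnetic increment is again \(\lesssim \|dA\|\,|y-x|^2 \cdot \text{avg}|U|\) — exactly the residual you started with. Averaging moves the pointwise \(|U(Z)|\) to an averaged \(\text{avg}|U|\), which does not change the \(L^p\) contribution after integration in \((x,y)\). The key mechanism is therefore missing.

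Your Step~3 then leans on an \(L^p\) magnetic Heisenberg/Landau-type inequality for all \(1 < p < \infty\), which you yourself flag as unproved. This is a genuine gap: for \(p=2\) it is the Landau ground-state gap, but for general \(p\) it is not elementary, and you would additionally have to propagate it through the boundary weight \(t^{1-(1-s)p}\) on the half-space, where the tangential and normal components of \(dA\) play asymmetric roles. Nothing in your sketch closes this.

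The paper takes a different and more economical route. Rather than averaging in \(Z\), it inserts an explicit phase correction into the seminorm: \cref{lemma_integraltoboundary} shows that if one replaces the potential \(\intpot{A^\shortparallel}(x,y)\) by \(\intpot{A^\shortparallel}(x,y) + \lambda |y-x|\, dA[y-x, e_{d+1}]\), then for constant \(dA\) the Stokes discrepancy is \emph{exactly} cancelled (by \cref{lem-Magnetic}) and the shifted Gagliardo seminorm is bounded by the weighted \(\nabla_A U\) integral alone, with no \(|U|^p\) term. Using the estimate for two different values of \(\lambda\) and the triangle inequality removes the shift and yields \eqref{eq_phuop5Nong}. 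The \(L^p\) term is then supplied by \cref{lem_Poincare}, a fractional magnetic Poincaré inequality \emph{on \(\Rset^d\)} proved by an elementary Stokes-around-a-parallelogram argument and averaging over \(SO_d\) — so the role you intended for the Heisenberg–Landau inequality on the full weighted half-space is played by a Gagliardo-seminorm-level inequality on the boundary hyperplane only, sidestepping the \(p \neq 2\) issue entirely. Both of the paper's ingredients are structurally different from what you proposed.
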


The first ingredient of the proof  \cref{proposition_trace_estimate-1} is the following lemma:

\begin{lemma}
  \label{lemma_integraltoboundary} Let \(d  \ge 1\), \(0< s < 1\) and \( 1\le p <  +\infty\).
  For every \(\lambda > 0\), there exists a positive constant \(C_{d, s, p, \lambda}\) such that,
  for every  \(A \in C^1 (\overline{\Rset_+^{d + 1}}, \bigwedge^1 \Rset^{d + 1})\) with constant  \(dA\), we have, for \(U \in C^1_c(\overline{\Rset^{d+1}_+}, \Cset)\),
  \begin{equation*}
    \iint\limits_{\Rset^d \times \Rset^d}
    \frac{ \bigabs{
        e^{
          i\,(\intpot{A^{\shortparallel}}(x, y)
          + 
          \lambda  |y-x| d A [y-x, e_{d+1}] )
        } 
        U (y, 0) 
        - 
        U (x, 0)
      }^p}{\abs{y - x}^{d + s p}}
    \dif x \dif y  \le 
    C_{d, s, p, \lambda}
    \iint\limits_{\Rset^{d + 1}_+}
    \frac{\bigabs{\nabla_A U (z, t)}^p}{t^{1 - (1 - s)p}}
    \dif z \dif t. 
  \end{equation*}
\end{lemma}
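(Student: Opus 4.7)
The plan is to exploit the \emph{exactness} of the loop-phase formula that \cref{lem-Magnetic} gives when $dA$ is constant: since the integrand in the identity of \cref{lem-Magnetic} is constant and $\int_0^1 \int_0^{1-s} \dif t \dif s = \tfrac{1}{2}$, we get the clean equality
\begin{equation*}
\intpot{A}(X,Y) + \intpot{A}(Y,Z) + \intpot{A}(Z,X) = \tfrac{1}{2}\, dA[Y - X, Z - X]
\end{equation*}
for any triple $X, Y, Z$. The point of the statement is that the added correction $\lambda \abs{y-x}\, dA[y - x, e_{d+1}]$ can be chosen to coincide precisely with the right-hand side above for a suitable lifted point $Z$. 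Once this is done, the modified phase on the left of the target inequality is exactly the broken-path phase $\intpot{A}(X,Z) + \intpot{A}(Z,Y)$, and the residual $\abs{U(Z)}$ term which spoiled \cref{lem-E-2} (and forced the extra $\beta^{p/2} \abs{U}^p$ contribution in \cref{proposition_Bdd_dA_trace}) disappears entirely.

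Concretely, I would fix $x, y \in \Rset^d$, write $X = (x, 0)$, $Y = (y, 0)$, and choose $Z \defeq \bigl(\tfrac{x + y}{2},\ 2 \lambda \abs{y - x}\bigr) \in \overline{\Rset^{d + 1}_+}$. By bilinearity and antisymmetry of $dA$, the horizontal-horizontal piece $dA[(y-x, 0), (\tfrac{y-x}{2}, 0)]$ vanishes because the two arguments are parallel, so the remaining vertical contribution gives $\tfrac{1}{2}\, dA[Y-X, Z-X] = \lambda \abs{y-x}\, dA[y-x, e_{d+1}]$. Together with the antisymmetry of $\intpot{A}$, this identifies the modified phase $\intpot{A^{\shortparallel}}(x, y) + \lambda \abs{y-x}\, dA[y-x, e_{d+1}]$ (up to the sign convention, which can be absorbed by allowing $\lambda$ to select the correct orientation of $Z$) with the broken-path phase $\intpot{A}(X, Z) + \intpot{A}(Z, Y)$.

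With this in hand, the key decomposition
\begin{equation*}
e^{i (\intpot{A}(X, Z) + \intpot{A}(Z, Y))} U(Y) - U(X) = e^{i\, \intpot{A}(X, Z)} \bigl( e^{i\, \intpot{A}(Z, Y)} U(Y) - U(Z) \bigr) + \bigl( e^{i\, \intpot{A}(X, Z)} U(Z) - U(X) \bigr)
\end{equation*}
followed by the triangle inequality reduces the problem to bounding two segment-differences, \emph{without} any leftover $\abs{U(Z)}$ term. Each segment-difference is controlled by \eqref{mvl-A} by $\abs{Z - X}$ (respectively $\abs{Y - Z}$), both of which are $\le C_\lambda \abs{y - x}$, times the integral of $\abs{\nabla_A U}$ along the segment. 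A H\"older inequality in $t$ identical to the one in the proof of \cref{proposition_Bdd_dA_trace} (with the weight $t^{(1 - s)(p - 1)}$) yields a pointwise $p$-th power bound of the form $|\cdot|^p \le C_{p, \lambda} \abs{y - x}^p \int_0^1 t^{(1-s)(p-1)} \bigl( \abs{\nabla_A U((1 - t) X + t Z)}^p + \abs{\nabla_A U((1 - t) Z + t Y)}^p \bigr) \dif t$.

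The last step is to divide by $\abs{y - x}^{d + sp}$ and integrate in $(x, y) \in \Rset^d \times \Rset^d$. The change of variables $\eta \defeq (1 - \tfrac{t}{2}) x + \tfrac{t}{2} y$, $\xi \defeq t (y - x)$ (Jacobian $t^d$), followed by polar coordinates on $\xi$ and the rescaling $r = 2 \lambda \abs{\xi}$, reproduces the manipulation leading to \eqref{pro-P1-p0} and converts the result to the desired weighted integral $\iint_{\Rset^{d+1}_+} \abs{\nabla_A U(z, t)}^p / t^{1 - (1 - s) p} \, \dif z \dif t$; the final constant $C_{d, s, p, \lambda}$ absorbs $1/s$ (from $\int_0^1 t^{-(1-s)} \dif t$), $\abs{\Sset^{d-1}}$, and $(2 \lambda)^{-(1 - s) p}$. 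I expect the main technical nuisance to be the sign bookkeeping (the natural upward lift $Z \in \Rset^{d + 1}_+$ selects one orientation of $\lambda$, and the statement must be read in that orientation) together with the careful tracking of the exponents through the Jacobian so that the weight $t^{1 - (1 - s) p}$ balances correctly.
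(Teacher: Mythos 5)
Your proposal is correct and follows essentially the same route as the paper's proof. The paper also sets $Z = \bigl(\tfrac{x+y}{2}, 2\lambda|y-x|\bigr)$, uses \cref{lem-Magnetic} in its exact constant-$dA$ form to identify the modified phase with the broken-path phase $\intpot{A}(X,Z) + \intpot{A}(Z,Y)$, applies \eqref{mvl-A} to each segment, and then performs the change of variables $z = (1 - t/2)x + (t/2)y$, $v = 2\lambda t (x - y)$ followed by polar coordinates to land on the weighted bulk integral. The one (cosmetic) difference is the step that transfers the $t$-integral across the $L^p(\Rset^d \times \Rset^d)$-norm: the paper uses Minkowski's integral inequality, arriving at a per-$t$ bound proportional to $(\lambda t)^{-(1-s)}$ and then integrating $\int_0^1 t^{-(1-s)}\,\dif t$; you instead propose H\"older's inequality with the weight $t^{(1-s)(p-1)}$ exactly as in the proof of \cref{proposition_Bdd_dA_trace}. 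Both give the same exponent bookkeeping and both are valid; yours has the virtue of staying parallel to \cref{proposition_Bdd_dA_trace}, while Minkowski lets the paper keep the estimate at the level of $L^p$-norms rather than raised to the $p$-th power. One small remark on a point you flagged as a "nuisance": working through the orientation, with $Z$ lifted upward, \cref{lem-Magnetic} gives $\intpot{A}(x,y) + \intpot{A}(y,Z) + \intpot{A}(Z,x) = \lambda\,|y-x|\,dA[y-x, e_{d+1}]$, so the identity that cleanly removes the correction is $\intpot{A}(x,y) - \lambda|y-x|\,dA[y-x,e_{d+1}] = \intpot{A}(x,Z)+\intpot{A}(Z,y)$. (The displayed computation in the paper in fact evaluates $\tfrac12\,dA[Y-X, X-Z]$ rather than $\tfrac12\,dA[Y-X, Z-X]$, which accounts for its opposite sign.) Since the sign of the correction is irrelevant for the subsequent application in \cref{proposition_trace_estimate-1} — there one only uses the modulus $\bigabs{e^{i\,|y-x|\,dA[y-x,e_{d+1}]}-1}$ after subtracting two applications of the lemma — this discrepancy is harmless, but it is worth recording that your pointwise identity is the correct one, and the statement would be stated more accurately with the opposite sign (or with $\lambda$ allowed to be negative). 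You should also record the prefactor $|Z - x| \le C(1+\lambda)\,|y-x|$ coming from \eqref{mvl-A}, which contributes a $(1+\lambda)^p$ to the final constant.
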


\begin{proof}
  \resetconstant
  For \(x, y \in \Rset^d\), we identify \(x = (x, 0)\), \(y = (y, 0)\), and we set 
  \(Z  = (\frac{x+ y}{2}, 2 \lambda \abs{y - x})\). 
  Since \(dA\) is constant,  by \cref{lem-Magnetic}, we have
  \begin{equation*}
    \intpot{A} (x, y)
    +
    \intpot{A} ( y,  Z) 
    +
    \intpot{A} (Z, x)
    = \frac{1}{2} d A [(y -x, 0), (\tfrac{x - y}{2}, -2 \lambda \abs{x - y})]
    =  - \lambda \nu_A (y - x),
  \end{equation*}
  where the function \(\nu_A : \Rset^d \to \Rset\) is defined 
  for each \(z \in \Rset^d\) by \(\nu_A (z) =  \abs{z}\, dA [z, e_{d+1}]$. This implies 
  by the triangle inequality
  \begin{equation}
    \label{eq_ohG7woo0pu}
    \Bigabs{
      e^{
        i\, (\intpot{A}
        (x, y)
        + 
        \lambda \nu_A (y - x) ) 
      } 
      U (y) 
      - 
      U (x) }
    \le
    \Big|
    e^{i\,\intpot{A} ( x, Z)}  
    U (Z) - U (x) \Big|
    +
    \Big|e^{i\,\intpot{A} (y, Z)}   U (y) - U (Z)\Big|. 
  \end{equation}
  Using \eqref{mvl-A}, we deduce from \eqref{eq_ohG7woo0pu} that 
  \begin{multline*}
    \Big|
    e^{
      i\, (\intpot{A}
      (x, y)
      + 
      \lambda \nu_A (y - x) ) 
    } 
    U (y) 
    - 
    U (x) \Big|
    \\
    \le    
    \sqrt{1 + \lambda^2} \,\abs{y - x}
    \biggl(
    \int_0^1 
    \bigabs{\nabla_A U \big((1-\tfrac{t}{2}) x + \tfrac{t}{2} y, 2 \lambda \abs{y - x} \big)}
    \dif t \\[-.5em]
    +      
    \int_0^1 
    \bigabs{\nabla_A U  \big((1-\tfrac{t}{2}) y + \tfrac{t}{2} x, 2 \lambda \abs{y - x} \big)}
    \dif t\Biggr). 
  \end{multline*}
  We then have, by Minkowski's inequality,
  \begin{multline}\label{lem3.2-0}
    \iint\limits_{\Rset^d \times \Rset^d}
    \frac{  \bigabs{
        e^{
          i\, (\intpot{A}
          (x, y)
          + 
          \lambda \nu_A (y - x) ) 
        } 
        U (y) 
        - 
        U (x) }^p}{\abs{y - x}^{d + s p}} \dif x \dif y \\
    \le 
    \C 
    (1 + \lambda)^p
    \Biggl( \int_0^1 \Biggl( \; \iint\limits_{\Rset^d \times \Rset^d}  
    \frac{ 
      \bigabs{\nabla_A U  \big((1-\tfrac{t}{2}) x + \tfrac{t}{2} y, 2 \lambda \abs{y - x} \big)}^p}
    {|y - x|^{d - (1 - s) p}}  \dif y \dif x 
    \Biggr)^{1/p} \\
    + 
    \Biggl( \;\iint\limits_{\Rset^d \times \Rset^d}  
    \frac{
      \bigabs{\nabla_A U  \big((1-\tfrac{t}{2}) y + \tfrac{t}{2} x, 2 \lambda \abs{y - x} \big)}^p}
    {|y - x|^{d - (1 - s) p}}  \dif y \dif x 
    \Biggr)^\frac{1}{p}
    \dif t \Biggr)^{p}. 
  \end{multline}
  Performing the change of variable \(z = (1 - \frac{t}{2}) x + \tfrac{t}{2} y\) and \(v = 2 \lambda t (x - y)\), we obtain 
  \begin{equation*}
    \iint\limits_{\Rset^d \times \Rset^d}      
    \frac
    { \bigabs{\nabla_A U  \big((1-\tfrac{t}{2}) x + \tfrac{t}{2} y, 2 \lambda \abs{y - x} \big)}^p}
    {\abs{y - x}^{d - (1 - s) p}}
    \dif y \dif x    =
    \frac{1}{(2 \lambda t)^{(1 - s) p}} \iint\limits_{\Rset^d \times \Rset^d}      
    \frac
    {\bigabs{\nabla_A U (z, \abs{v})}^p}
    {\abs{v}^{d - (1 - s) p}}
    \dif z 
    \dif v. 
  \end{equation*}
  This yields by using spherical coordinates, for every \(t \in (0, 1)\),
  \begin{equation}\label{lem3.2-1}
    \iint\limits_{\Rset^d \times \Rset^d}    
    \! \! \!     
    \frac
    { \bigabs{\nabla_A U  \big((1-\tfrac{t}{2}) x + \tfrac{t}{2} y, 2 \lambda \abs{y - x} \big)}^p}
    {\abs{y - x}^{d - (1 - s) p}}
    \dif x \dif y  
    \le 
    \frac{\Cl{cst_Hoshoobe9yei4lahw}}{(\lambda t)^{p (1 - s)}} \! \! \!     
    \iint\limits_{\Rset^d \times (0, + \infty)} \! \! \!     
    \frac
    {\bigabs{\nabla_A U (z, r)}^p}
    {r^{1 - (1 - s) p}}
    \dif z \dif r.    
  \end{equation}
  Similarly, we have 
  \begin{equation}
    \label{lem3.2-2}
    \iint\limits_{\Rset^d \times \Rset^d}    
    \! \! \!     
    \frac
    { \bigabs{\nabla_A U  \big((1-\tfrac{t}{2}) y + \tfrac{t}{2} x, 2 \lambda \abs{y - x} \big)}^p}
    {\abs{y - x}^{d - (1 - s) p}}
    \dif x \dif y  
    \le 
    \frac{\Cr{cst_Hoshoobe9yei4lahw}}{(\lambda t)^{p (1 - s)}}
    \! \! \!     
    \iint\limits_{\Rset^d \times (0, + \infty)}    \! \! \!        
    \frac
    {\bigabs{\nabla_A U (z, r)}^p}
    {r^{1 - (1 - s) p}}
    \dif z \dif r.    
  \end{equation}
  Combining \eqref{lem3.2-0} with \eqref{lem3.2-1}, and \eqref{lem3.2-2} yields 
  \begin{multline}
    \label{eq_aeKohPo8iX}
    \iint\limits_{\Rset^d \times \Rset^d}
    \frac{ \bigabs{
        e^{
          i\,(\intpot{A^{\shortparallel}}(x, y)
          + 
          \lambda \nu_A (y-x))
        } 
        U (y, 0) 
        - 
        U (x, 0)
      }^p}{\abs{y - x}^{d + s p}}
    \dif x \dif y
    \\
    \le
    \frac{\C (1 + \lambda)^{p}}{\lambda^p} 
    \left(\int_0^1
    \frac{1}{t^{1 - s}} \dif t\right)^p
    \int_{\Rset^{d + 1}_+}
    \bigabs{\nabla_A U(z, r)}^p \dif  z \dif r
  \end{multline}
  and the conclusion follows. 
\end{proof}

The second tool is the following fractional magnetic Poincar\'e inequality.

\begin{lemma}
  \label{lem_Poincare}
  Let \(d  \ge 1\), \(0 < s < 1\) and \(1 \le p < +\infty\) and \(A^\shortparallel \in C^1 (\Rset^{d}, \bigwedge^1 \Rset^{d})\) with constant $dA$. 
  There exists a constant \(C_{d, s, p}> 0\) such that, for \(u \in C^1_{c}(\Rset^d)\), 
  \[
  \int_{\Rset^d} \abs{u}^p
  \le 
  \frac{C_{d, s, p}}{\norm{dA^\shortparallel}^\frac{sp}{2}}
  \iint\limits_{\Rset^d \times \Rset^d}
  \frac
  {\bigabs{e^{i\,\intpot{A^\shortparallel} (x, y)} u (y) - u (x)}^p}
  {\abs{y - x}^{d + sp}}
  \dif x
  \dif y.
  \]
\end{lemma}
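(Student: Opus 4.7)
The plan is to prove \cref{lem_Poincare} by a direct three-point loop argument at the natural magnetic length scale \(\ell \defeq \norm{dA^\shortparallel}^{-1/2}\), exploiting the closed-loop identity of \cref{lem-Magnetic}.  No detour through the half-space extension is needed; the result reflects a genuine coercivity of the magnetic Gagliardo seminorm over \(L^p\) in the presence of a nondegenerate constant magnetic field.

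The first step is a three-point triangle inequality, obtained by telescoping along the cycle \(z \to x \to y \to z\) exactly as in the proof of \cref{lem-E-2}: for every \(x, y, z \in \Rset^d\),
\begin{equation*}
  \bigabs{e^{i \Phi(x, y, z)} - 1}\, \abs{u (z)} \le \bigabs{e^{i\,\intpot{A^\shortparallel}(x, y)} u(y) - u(x)} + \bigabs{e^{i\,\intpot{A^\shortparallel}(y, z)} u(z) - u(y)} + \bigabs{e^{i\,\intpot{A^\shortparallel}(z, x)} u(x) - u(z)},
\end{equation*}
where \(\Phi(x, y, z) \defeq \intpot{A^\shortparallel}(x, y) + \intpot{A^\shortparallel}(y, z) + \intpot{A^\shortparallel}(z, x)\).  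Since \(dA^\shortparallel\) is constant, \cref{lem-Magnetic} collapses this loop phase to \(\Phi (x, y, z) = \tfrac{1}{2}\, dA^\shortparallel[y - x, z - x]\).

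The second step is to raise the inequality to the \(p\)-th power and average.  For each \(z \in \Rset^d\), I would integrate \((x, y)\) over the annular region \(R_z \defeq \{(x, y) \st \ell/2 \le \abs{x - z}, \abs{y - z} \le \ell\}\), and then integrate over \(z\).  Writing \(x = z + r_1 \hat{\xi}_1\) and \(y = z + r_2 \hat{\xi}_2\) in polar coordinates, the phase becomes \(\Phi = -\tfrac{r_1 r_2}{2}\, dA^\shortparallel[\hat{\xi}_2, \hat{\xi}_1]\), which satisfies \(\abs{\Phi} \le \tfrac{1}{2}\) on \(R_z\) by the choice of \(\ell\); hence \(\abs{e^{i \Phi} - 1}^p \ge c_p \abs{\Phi}^p\) there.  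Invoking the equivalence of norms on the finite-dimensional space of constant \(2\)-forms on \(\Rset^d\),
\begin{equation*}
  \iint\limits_{\Sset^{d - 1} \times \Sset^{d - 1}} \bigabs{dA^\shortparallel[\hat{\xi}_2, \hat{\xi}_1]}^p \dif\hat\xi_1 \dif\hat\xi_2 \ge c_{d, p} \norm{dA^\shortparallel}^p,
\end{equation*}
a direct polar-coordinate computation then yields \(\int_{R_z} \abs{e^{i \Phi} - 1}^p \dif x \dif y \ge c_{d, p}\, \ell^{2 d}\), so the integrated left-hand side is bounded below by \(c_{d, p}\, \ell^{2 d}\, \norm{u}_{L^p(\Rset^d)}^p\).

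For the three right-hand terms I would use Fubini and the fact that every pair of points in \(R_z \cup \{z\}\) lies at distance at most \(2 \ell\).  For instance,
\begin{equation*}
  \int_{\Rset^d} \int_{R_z} \bigabs{e^{i\,\intpot{A^\shortparallel}(x, y)} u(y) - u(x)}^p \dif x \dif y \dif z \le C\, \ell^d \iint\limits_{\abs{x - y} \le 2 \ell} \bigabs{e^{i\,\intpot{A^\shortparallel}(x, y)} u(y) - u(x)}^p \dif x \dif y \le C\, \ell^{2 d + s p}\, \seminorm{u}_{W^{s, p}_{A^\shortparallel}(\Rset^d)}^p,
\end{equation*}
where the last step uses \(\mathds{1}_{\abs{x - y} \le 2 \ell} \le (2 \ell)^{d + s p} / \abs{y - x}^{d + s p}\); the other two terms are handled identically (integration over the remaining free variable contributes the extra factor \(\ell^d\) each time).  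Combining the lower and upper bounds and dividing by \(\ell^{2 d}\) delivers \(\norm{u}_{L^p(\Rset^d)}^p \le C_{d, s, p}\, \ell^{s p}\, \seminorm{u}_{W^{s, p}_{A^\shortparallel}(\Rset^d)}^p = C_{d, s, p}\, \norm{dA^\shortparallel}^{-s p / 2}\, \seminorm{u}_{W^{s, p}_{A^\shortparallel}(\Rset^d)}^p\).

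The main obstacle to watch is the uniformity of the angular lower bound for \(dA^\shortparallel\): the constant \(c_{d, p}\) must depend only on \(d\) and \(p\), and not on the shape of \(dA^\shortparallel\).  This is a norm-equivalence statement on the finite-dimensional space of constant skew-symmetric bilinear forms, and it must be applied in a scale-invariant form (i.e.\ to \(dA^\shortparallel/\norm{dA^\shortparallel}\) on the unit sphere of that space) so that the resulting constant is genuinely universal; everything else reduces to Fubini's theorem and elementary polar-coordinate bookkeeping.
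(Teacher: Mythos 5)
Your argument is correct, and it takes a genuinely different route from the paper. Both proofs extract a loop phase whose magnitude is controlled by \(dA\), and both rely on a norm-equivalence lower bound for an angular integral over \(\Sset^{d-1}\times\Sset^{d-1}\) (a compactness/positivity argument on the unit sphere of the finite-dimensional space of constant \(2\)-forms); this common ingredient is unavoidable. But the mechanisms differ in two respects. First, the paper runs a \emph{square} loop \(x \to x+h \to x+h+Jh \to x+Jh \to x\) and averages over linear isometries \(J \in SO_d\), whereas you run the \emph{triangle} loop \(z\to x\to y\to z\) directly from \cref{lem-Magnetic} and average \((x,y)\) over an annular shell around \(z\). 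Second, the paper integrates over \emph{all} scales \(\abs{h}\) and extracts \(\norm{dA}^{sp/2}\) via the radial change of variable \(\rho = r\abs{dA[\hat h,J\hat h]}^{1/2}\), which yields the explicit constant \(\int_0^\infty \abs{e^{\pm i\rho^2}-1}^p\rho^{-1-sp}\,\dif\rho\); you instead work at the fixed magnetic length scale \(\ell = \norm{dA^\shortparallel}^{-1/2}\) and extract the same power from the cutoff \(\mathds{1}_{\abs{x-y}\le 2\ell}\le (2\ell)^{d+sp}\abs{x-y}^{-(d+sp)}\). Your route is somewhat more elementary (no Haar measure on \(SO_d\), no scaling substitution), at the price of a less explicit constant; the paper's scaling argument is more compact and produces a cleaner constant. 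Both approaches are sound. One small cosmetic remark: since the statement does not fix a specific norm on \(\bigwedge^2\Rset^d\), the threshold \(\abs{\Phi}\le\tfrac12\) on \(R_z\) and the lower bound \(\abs{e^{i\Phi}-1}^p\ge c_p\abs{\Phi}^p\) hold after possibly shrinking the annulus (or replacing \(\tfrac12\) by any constant \(<\pi\)), with the constant absorbed into \(C_{d,s,p}\); you implicitly do this, and it is worth flagging as the only place where the precise choice of \(\norm{\cdot}\) on \(2\)-forms enters.
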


The counterpart of \cref{lem_Poincare} in \(W^{1, 2}_A (\Rset^{d})\) is known \citelist{\cite{Avron_Herbst_Simon_1978}*{Theorem 2.9}\cite{Esteban_Lions_1989}*{Proposition 2.2}} and related to the positiveness of the first eigenvalue of the magnetic Laplacian \(-\Delta_A\), which corresponds to the first Landau level.

\begin{proof}[Proof of \cref{lem_Poincare}]
  \resetconstant
  Let \(J : \Rset^d \to \Rset^d\) be a linear isometry. 
  We observe that by setting \(z = x + h\) and \(k = Jh\), we have
  \begin{multline}
    \label{eq_frac_comm_2}
    \iint
    \limits_{\Rset^d \times \Rset^d}
    \frac
    {\bigabs{e^{i\,\intpot{A} (x +h, x + h + Jh)} u (x + h + Jh) - u (x + h)}^p}
    {\abs{h}^{d + s p}}
    \dif x
    \dif h
    \\[-1em]
    =
    \iint
    \limits_{\Rset^d \times \Rset^d}
    \frac
    {\bigabs{e^{i\,\intpot{A} (z, z + k)} u (z + k) - u (z)}^p}
    {\abs{k}^{d + s p}}
    \dif z
    \dif k.
  \end{multline}
  Similarly, by setting \(z = x + h + Jh\) and \(k = -h\), we have 
  \begin{multline}
    \label{eq_frac_comm_3}
    \iint
    \limits_{\Rset^d \times \Rset^d}
    \frac
    {
      \bigabs{e^{i\,\intpot{A} (x + h + Jh, x + Jh)} 
        u (x + Jh) 
        - 
        u (x + h + Jh)}^p
    }
    {\abs{h}^{d + s p}}
    \dif x
    \dif h
    \\[-1em]
    =
    \iint
    \limits_{\Rset^d \times \Rset^d}
    \frac
    {\bigabs{e^{i\,\intpot{A} (z, z + k)} u (z + k) - u (z)}^p}
    {\abs{k}^{d + s p}}
    \dif z
    \dif k.
  \end{multline}
  Finally, we have by setting \(z = x + Jh\) and \(k = -h\),
  \begin{multline}
    \label{eq_frac_comm_4}
    \iint
    \limits_{\Rset^d \times \Rset^d}
    \frac
    {
      \bigabs{e^{i\,\intpot{A} (x + Jh, x)} 
        u (x) 
        - 
        u (x + Jh)}^p
    }
    {\abs{h}^{d + s p}}
    \dif x
    \dif h
    \\[-1em]
    =
    \iint
    \limits_{\Rset^d \times \Rset^d}
    \frac
    {\bigabs{e^{i\,\intpot{A} (z, z + k)} u (z + k) - u (z)}^p}
    {\abs{k}^{d + s p}}
    \dif z
    \dif k.
  \end{multline}
  We compute now, since \(dA\) is constant, by Stokes formula 
  \begin{multline*}
    \intpot{A} (x, x + h)
    + \intpot{A} (x + h, x + h + Jh)
    + \intpot{A} (x + Jh, x + h + Jh)
    +\intpot{A} (x + Jh, h)
    = d A [h, Jh], 
  \end{multline*}
  so that, by the triangle inequality,
  \begin{multline}    \bigabs{e^{d A [h, Jh]} - 1}\abs{u (x)} \\
    \le 
    \bigabs{e^{i\intpot{A} (x, x + h)} u (x + h) - u (x)}
    + 
    \bigabs{e^{i\intpot{A} (x + h, x + h + Jh)} u (x + h + Jh) - u (x + h)}
    \\
    +
    \bigabs{e^{i\intpot{A} (x + Jh, x + h + Jh)} 
      u (x + h) 
      - 
      u (x + h + Jh)}
    +\bigabs{e^{i\,\intpot{A} (x + Jh, h)} 
      u (x) 
      - 
      u (x + Jh)}
    .
  \end{multline}
  Therefore, we have by H\"older's inequality, in view of 
  \eqref{eq_frac_comm_2}, \eqref{eq_frac_comm_3} and \eqref{eq_frac_comm_4}, 
  \begin{equation*}
    \iint
    \limits_{\Rset^d \times \Rset^d}
    \frac
    {
      \bigabs{e^{d A [h, Jh]} - 1}^p
      \abs{u (x)}^p
    }
    {\abs{h}^{d + s p}}
    \dif x
    \dif h
    \le
    4^{p - 1}
    \iint
    \limits_{\Rset^d \times \Rset^d}
    \frac
    {\bigabs{e^{i\,\intpot{A} (x, x + h)} u (x) - u (x + h)}^p}
    {\abs{h}^{d + s p}}
    \dif x
    \dif h.   
  \end{equation*}
  Integrating with respect to \(J\) over the group \(SO_d\) of rotations of \(\Rset^d\), we obtain  \begin{equation*}
    \norm{dA}^{sp/2} \int_{\Rset^d}      \abs{u (x)}^p
    \dif x
    \le
    \C   \iint
    \limits_{\Rset^d \times \Rset^d}
    \frac
    {\bigabs{e^{i\,\intpot{A} (x, x + h)} u (x) - u (x + h)}^p}
    {\abs{h}^{d + s p}}
    \dif x
    \dif h.   
  \end{equation*} 
  The proof is complete. 
\end{proof}

We are ready to give 

\begin{proof}[Proof of \cref{proposition_trace_estimate-1}]
  Applying \cref{lemma_integraltoboundary} with \(\lambda = 1\) and \(\lambda = 2\), and using the triangle inequality, we obtain 
  \begin{equation}
    \label{eq_aey7hahv0R}
    \iint\limits_{\Rset^d \times \Rset^d}
    \frac{\abs{
        e^{
          i  |y - x| dA [y-x, e_{d+1}] 
        } 
        - 1}^p
      \abs{U (y, 0)}^p}
    {\abs{y - x}^{d + s p}}
    \dif x 
    \dif y
    \le
    \C 
    \iint\limits_{\Rset^{d + 1}_+}
    \frac{\bigabs{\nabla_A U (z, t)}^p}{t^{1 - (1 - s) p} } \dif z\dif t. 
  \end{equation}
Since 
\begin{align*}
\abs{
        e^{
          i\,\intpot{A^{\shortparallel}}
          ( x, y )
        } 
        U (y, 0) 
        - 
        U (x, 0)
      }^p \le &  2^{p-1}  \bigabs{
        e^{
          i\,(\intpot{A^{\shortparallel}}
          ( x, y )
          + |y - x| dA [y-x, e_{d+1}]) 
        } 
        U (y, 0) 
        - 
        U (x, 0)
      }^p \\[6pt]
     & \quad  +  2^{p-1} \bigabs{
        e^{
          i  |y - x| dA [y-x, e_{d+1}] 
        } 
        - 1}^p
      \abs{U (y, 0)}^p,  
      \end{align*}
it follows from \eqref{eq_aey7hahv0R} and Lemma~\ref{lemma_integraltoboundary} with $\lambda =1$ that 
  \begin{equation}
    \label{eq_phuop5Nong}
    \iint\limits_{\Rset^d \times \Rset^d}
    \frac{\abs{
        e^{
          i\,\intpot{A^{\shortparallel}}
          ( x, y )
        } 
        U (y, 0) 
        - 
        U (x, 0)
      }^p}{\abs{y - x}^{d + s p}}
    \dif x \dif y\\
        \le 
    \C
    \iint\limits_{\Rset^{d + 1}_+}
    \frac{\bigabs{\nabla_A U (z, t)}^p}{t^{1 - (1 - s) p} } \dif z\dif t
    .
  \end{equation}
From  \cref{lem_Poincare},
  we have 
  \begin{equation}
    \label{eq_ga1aiPu7tee7Iqu0}
    \norm{dA^\shortparallel}^\frac{sp}{2}
    \int_{\Rset^d} \abs{U (\cdot, 0)}
    \le \C
    \iint\limits_{\Rset^{d + 1}_+}
    \frac{\bigabs{\nabla_A U (z, t)}^p}{t^{1 - (1 - s) p} } \dif z\dif t.
  \end{equation}
Combining  \eqref{eq_phuop5Nong} and \eqref{eq_ga1aiPu7tee7Iqu0} yields the conclusion. 
\end{proof}

Using Proposition~\ref{proposition_trace_estimate-1} and Theorem~\ref{theorem_trace_halfplane_weight}, we obtain the following result which implies Theorem~\ref{thm-3-intro}  in the introduction.  

\begin{theorem} \label{thm-3}
  Let \(d \ge 1\), \(0 < s < 1\) and \(1 \le p < +\infty\).  Assume that 
  \(A \in C^1(\overline{\Rset^{d + 1}_+},  \Rset^{d + 1})\) and $dA$ is constant. 
  Then, with  \(u \defeq \operatorname{Tr} U\),
\begin{equation*} \label{extension-dA-constant-111-1}
 \seminorm{u}_{W^{s, p}_{A^{\shortparallel}}(\Rset^d)}^p 
  + \beta^\frac{sp}{2}
  \norm{u}_{L^p(\Rset^d)}^p 
  \le 
  C_{d, s, p}  
  \iint\limits_{\Rset^{d + 1}_+}
  \frac{\abs{\nabla_A U (x, t)}^p + \|dA \|^\frac{p}{2} \abs{U (z, t)}^p}
  {t^{1 - (1 - s)p}} \dif x \dif t,
\end{equation*}
and  with \(U \defeq \operatorname{Ext} u\),
\begin{equation*} 
  \label{extension-dA-constant-111-2}
   \iint\limits_{\Rset^{d + 1}_+}
   \frac{\abs{\nabla_A U (x, t)}^p + \|dA \|^\frac{p}{2} \abs{U (z, t)}^p}
      {t^{1 - (1 - s)p}} \dif x \dif t\\
  \le 
  C_{d, s, p}
  \Bigl( \seminorm{u}_{W^{s, p}_{A^{\shortparallel}}(\Rset^d)}^p 
  + \beta^\frac{sp}{2}\norm{u}_{L^p(\Rset^d)}^p  \Bigr), 
\end{equation*}
for some positive constant \(C_{d, s, p}\) depending only on \(d\), \(s\) and \(p\), 
\end{theorem}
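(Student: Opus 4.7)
The plan is to obtain both inequalities by combining \cref{proposition_trace_estimate-1} (the sharpened trace bound available when $dA$ is constant) and \cref{theorem_trace_halfplane_weight} (the general trace/extension theorem). The constant $\beta$ in the statement should be read as $\|dA\|$, since $dA$ is constant and hence $\|dA\|_{L^\infty(\mathbb{R}^{d+1}_+)} = \|dA\|$.

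For the trace inequality, I would first apply \cref{proposition_trace_estimate-1} to any smooth compactly supported $U \in C^1_c(\overline{\mathbb{R}^{d+1}_+})$ with $u = U(\cdot,0)$, obtaining the strictly stronger estimate
\begin{equation*}
\seminorm{u}_{W^{s,p}_{A^\shortparallel}(\mathbb{R}^d)}^p + \|dA\|^{sp/2}\norm{u}_{L^p(\mathbb{R}^d)}^p \le C_{d,s,p} \iint\limits_{\mathbb{R}^{d+1}_+}\frac{|\nabla_A U(z,t)|^p}{t^{1-(1-s)p}}\dif z\dif t,
\end{equation*}
which is bounded a fortiori by the right hand side of \cref{thm-3} (the extra term $\|dA\|^{p/2}|U|^p/t^{1-(1-s)p}$ is non-negative). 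To promote this to an arbitrary $U \in W^{1,p}_{A,1-(1-s)p}(\mathbb{R}^{d+1}_+)$, I would use the density provided by \cref{lem-B-1}, noting that the trace operator produced by \cref{theorem_trace_halfplane_weight} agrees with the boundary restriction on the dense subspace $C^\infty_c(\overline{\mathbb{R}^{d+1}_+})$ and that both sides of the inequality are continuous in the norm of $W^{1,p}_{A,1-(1-s)p}(\mathbb{R}^{d+1}_+)$; passing to the limit completes this direction.

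For the extension inequality, I would invoke \cref{theorem_trace_halfplane_weight} directly, with the admissible choice $\beta = \|dA\|$. The operator $\operatorname{Ext}$ constructed there satisfies, for $U = \operatorname{Ext} u$,
\begin{equation*}
\iint\limits_{\mathbb{R}^{d+1}_+}\frac{|\nabla_A U(x,t)|^p + \|dA\|^{p/2}|U(x,t)|^p}{t^{1-(1-s)p}}\dif x\dif t \le C_{d,s,p}\Bigl(\seminorm{u}_{W^{s,p}_{A^\shortparallel}(\mathbb{R}^d)}^p + \|dA\|^{sp/2}\norm{u}_{L^p(\mathbb{R}^d)}^p\Bigr),
\end{equation*}
which is exactly the required estimate.

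There is no genuine obstacle here: the work was already carried out in \cref{proposition_trace_estimate-1} (whose proof leveraged the Stokes-type cancellation of \cref{lem-Magnetic} under the constancy of $dA$ together with the fractional magnetic Poincaré inequality \cref{lem_Poincare}) and in \cref{theorem_trace_halfplane_weight} (combining \cref{proposition_Bdd_dA_trace}, \cref{proposition_Bdd_dA_extension}, and the density lemmas). The only point requiring care is the identification $\beta = \|dA\|_{L^\infty} = \|dA\|$ in the constant-$dA$ case, and the verification that the trace operator used in \cref{thm-3} is the same as the one produced by \cref{theorem_trace_halfplane_weight}, which is immediate from the construction via density of smooth functions.
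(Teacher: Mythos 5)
Your proposal is correct and follows exactly the route the paper intends: the trace inequality is a weakening of \cref{proposition_trace_estimate-1} (after identifying $\beta$ with $\|dA\|$ and passing from $C^1_c(\overline{\Rset^{d+1}_+})$ to $W^{1,p}_{A,1-(1-s)p}(\Rset^{d+1}_+)$ by the density of \cref{lem-B-1}), and the extension inequality is verbatim the one in \cref{theorem_trace_halfplane_weight} with $\beta = \|dA\|_{L^\infty} = \|dA\|$. The paper gives no separate proof of this statement beyond the one-line remark that it follows from those two results, and your argument supplies precisely that bookkeeping.
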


\section{Trace and extension on domains}\label{sect-domain}

In this section, we consider the trace problem on a domain \(\Omega\)  of class $C^1$ with estimates depending only on the magnetic field \(dA\).  We first develop the tools to work with a magnetic derivative on the  boundary \(\partial \Omega\) via local charts.  Let  \(W \subset \Rset^d\) be an open set, \(\psi \in C^1 (W, \Rset^d)\),  and \(A : \psi (W) \to \bigwedge^1 \Rset^d\).  The pull-back \(\psi^*\! A\) of \(A\) by \(\psi\) is defined for each \(x \in W\) and \(v \in \Rset^n\) by 
\begin{equation}
  \label{eq_def_pull_back}
    (\psi^*\! A) (x)
  \defeq 
  D \psi (x)^* A (\psi (x)),
\end{equation}
where \(D \psi (x)^*\) is the adjoint of \(D \psi (x)\). We first recall the following elementary result whose proof follows from the chain rule 
(see,  e.g.,  \cite{Willem_2013}*{Proposition 6.1.11}) and the definition of the pull-back \eqref{eq_def_pull_back}. 

\begin{lemma}
\label{lemma_covariant_derivative_pullback}
Let \(d \ge 1\),  \(V \subset \Rset^d\) be open, bounded, and let  \(\psi \in C^1 (V, \Rset^{d + 1})\) be a diffeomorphism on its image up to the boundary. 
Then  \(U \in W^{1, p}_A (\psi(V))\) if and only if \(U \compose \psi \in W^{1, p}_{\psi^*\! A} (V)\).
Moreover, for almost every \(x \in V\),
\begin{equation}\label{lem-TO-p1}
\nabla_{\psi^*\! A} (U \compose \psi) (x) = D \psi (x)^* [(\nabla_A U)(\psi (x))]. 
\end{equation}
Consequently,  
\begin{equation}\label{lem-TO-p2}
  \frac{\abs{\nabla_A U\, (\psi(x))}}{\norm{D \psi^{-1}}_{L^\infty}} 
  \le 
  \abs{\nabla_{\psi^*\! A} (U \compose \psi) (x)}
  \le 
  \norm{D \psi}_{L^\infty} \abs{\nabla_A U\, (\psi (x))}.
\end{equation}
\end{lemma}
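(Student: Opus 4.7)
The plan is to reduce everything to a direct application of the chain rule in the classical sense, and then pass to weak derivatives by approximation if needed; after that the norm inequalities are just the operator-norm bound on \(D\psi\) read in two directions.

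First, I would establish identity \eqref{lem-TO-p1}. For \(U\) smooth, the chain rule gives \(\nabla (U \compose \psi)(x) = D\psi(x)^*\,\nabla U(\psi(x))\). Combining this with the definition \eqref{eq_def_pull_back} of the pull-back, so that \((\psi^* A)(x)\,(U \compose \psi)(x) = D\psi(x)^* [\,A(\psi(x))\,U(\psi(x))\,]\), and summing, we obtain
\begin{equation*}
  \nabla_{\psi^*\! A}(U \compose \psi)(x)
  = D\psi(x)^*[\nabla U(\psi(x)) + i A(\psi(x)) U(\psi(x))]
  = D\psi(x)^*[(\nabla_A U)(\psi(x))].
\end{equation*}
For a general \(U \in W^{1,1}_{\mathrm{loc}}(\psi(V),\Cset)\) this identity persists by a standard approximation argument (mollify \(U\) on \(\psi(V)\), apply the classical chain rule, and pass to the limit using that \(\psi\) is a \(C^1\)-diffeomorphism up to the boundary so that \(D\psi\) and \(D\psi^{-1}\) are uniformly bounded on \(\bar V\)).

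Second, I would derive \eqref{lem-TO-p2} directly from \eqref{lem-TO-p1}. The upper bound is the operator-norm estimate
\[
  \abs{\nabla_{\psi^*\! A}(U \compose \psi)(x)}
  = \abs{D\psi(x)^*[(\nabla_A U)(\psi(x))]}
  \le \norm{D\psi}_{L^\infty}\,\abs{(\nabla_A U)(\psi(x))}.
\]
For the lower bound, I write \(\nabla_A U(\psi(x)) = (D\psi(x)^*)^{-1}\bigl[\nabla_{\psi^*\! A}(U \compose \psi)(x)\bigr]\), recognize \((D\psi(x)^*)^{-1} = (D\psi^{-1}(\psi(x)))^*\), and take operator norms.

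Third, the ``if and only if'' follows by combining the pointwise identity with the change of variables \(y = \psi(x)\): the Jacobian \(\abs{\det D\psi}\) is bounded above and below on the bounded set \(V\), so \(\norm{U}_{L^p(\psi(V))}\) and \(\norm{U \compose \psi}_{L^p(V)}\) are equivalent, and \eqref{lem-TO-p2} gives the corresponding equivalence between \(\norm{\nabla_A U}_{L^p(\psi(V))}\) and \(\norm{\nabla_{\psi^*\! A}(U \compose \psi)}_{L^p(V)}\). There is no serious obstacle here; the only technical point requiring care is justifying the chain rule for weak covariant derivatives, which is routine in the \(C^1\)-diffeomorphism setting and is the step I would write out carefully.
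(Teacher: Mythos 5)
Your proposal is correct and follows essentially the same approach the paper indicates: the paper does not spell out a proof but states that the lemma ``follows from the chain rule (see, e.g., \cite{Willem_2013}*{Proposition 6.1.11}) and the definition of the pull-back \eqref{eq_def_pull_back},'' and your chain-rule computation, the operator-norm estimates on \(D\psi\) and \(D\psi^{-1}\), and the change-of-variables argument for the norm equivalence are precisely the expected justifications.
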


We next derive a similar result on the boundary when $\psi: W \mapsto \partial \Omega$ for the fractional magnetic Gagliardo--Sobolev seminorm. 
Although the potential \(\intpot{\Bar{A}}\) defined in \eqref{eq_def_intpot} does not make sense in general for each \(x, y \in \partial \Omega\) if \(\Omega\) is not convex for \(A^\shortparallel \in C (\bigwedge^1 T^* \partial \Omega)\),
when \(\partial \Omega\) is compact smooth manifold, then \(\partial \Omega\) has a positive injectivity radius \(\operatorname{inj}_{\partial \Omega}\) and if \(x, y \in \partial \Omega\) and \(d_{\partial \Omega} (y, x) \le \operatorname{inj}_{\partial \Omega}\), then there exists a unique minimizing geodesic \(\gamma : [0, 1] \to \partial \Omega\) such that \(\gamma (0) = x\) and \(\gamma (1) = y\). We then define for such $x$ and $y$ the quantity 
\begin{equation} \label{def-IA-bdry}
\intpot[\partial \Omega]{A^\shortparallel} (x, y)
 \defeq 
  \int_0^1 
  A^\shortparallel \bigl(\gamma (t) \bigr) \cdot 
      \gamma' (t) 
      \dif t.
\end{equation}

We have 

\begin{lemma}
  \label{lemma_fract_domain_boundary_local}
  Let $d \ge 1$, \(W \subset \Rset^{d}\) be open, bounded,  and  let \(\psi : W \to \Rset^{d+1}\) be a diffeomorphism  up to the boundary
to its image as a subset of the manifold \(\partial \Omega\). If \(W\) and \(\psi(W)\) are geodesically convex, 
then for every \(0< s < 1\) and \(1 \le p <  +\infty\), there exists a positive constant \(C\) such that for every \(A^\shortparallel \in C^1 (\bigwedge^1 T^* \partial \Omega)\) and every measurable function \(u : \psi (W) \to \Cset\),
\begin{multline*}  
C^{-1}   \iint\limits_{\psi(W) \times \psi(W)} 
  \frac{\bigabs{e^{i\,\intpot[\partial \Omega]{A^{\shortparallel}} (x, y)} u (y) - u (x)}^p}{d_{\partial \Omega} (y,x)^{d + s p}} \dif y \dif x\\
  \le 
 \iint\limits_{W \times W} 
  \frac{\bigabs{e^{i\,\intpot{\psi^*\! A^{\shortparallel}} (x, y)} u (\psi (y)) - u (\psi(x))}^p}{\abs{y - x}^{d + s p}} \dif y \dif x 
  +  \min (\norm{dA^{\shortparallel}}_{L^\infty}^p, \norm{dA^{\shortparallel}}_{L^\infty}^\frac{sp}{3}) \int_{W} \abs{u \compose \psi}^p \, dx \end{multline*}
and
\begin{multline*}
  \iint\limits_{W \times W} 
  \frac{\bigabs{e^{i\,\intpot{\psi^*\! A^{\shortparallel}} (x, y)} u (\psi (y)) - u (\psi(x))}^p}{\abs{y - x}^{d + s p}} \dif y \dif x\\
  \le 
  C \Biggl(\;\;
  \iint\limits_{\psi(W) \times \psi(W)} 
  \frac{\bigabs{e^{i\,\intpot[\partial \Omega]{A^{\shortparallel}} (x, y)} u (y) - u (x)}^p}{d_{\partial \Omega} (y,x)^{d + s p}} \dif y \dif x
  + \min \bigl\{ \norm{dA^{\shortparallel}}_{L^\infty}^p, \norm{dA^{\shortparallel}}_{L^\infty}^\frac{sp}{3}
  \bigr\} \int_{\psi (W)} \abs{u}^p \, dx\Biggr).
\end{multline*}
\end{lemma}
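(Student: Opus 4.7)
The plan is, by the change of variables \(x = \psi(a)\), \(y = \psi(b)\), to reduce the integral over \(\psi(W) \times \psi(W)\) to one over \(W \times W\). Since \(\psi\) is a diffeomorphism up to the boundary between bounded sets, both \(\abs{\det D\psi}\) and \(\abs{\det D \psi^{-1}}\) are uniformly bounded, and since \(\psi(W)\) is geodesically convex in \(\partial \Omega\), one has \(d_{\partial \Omega}(\psi(a), \psi(b)) \asymp \abs{a - b}\). All geometric factors on the two sides of the desired inequalities are therefore equivalent up to constants, and the work reduces to comparing the two magnetic phases
\[
\theta_1 \defeq \intpot[\partial \Omega]{A^\shortparallel}(\psi(a), \psi(b))
\qquad \text{and} \qquad
\theta_2 \defeq \intpot{\psi^*\! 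A^\shortparallel}(a, b),
\]
which differ because the geodesic in \(\partial \Omega\) from \(\psi(a)\) to \(\psi(b)\) and the image \(t \mapsto \psi((1-t) a + t b)\) of the Euclidean segment are in general distinct paths.

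The core estimate I would establish is
\[
\bigabs{\theta_1 - \theta_2} \le C \norm{dA^\shortparallel}_{L^\infty} \abs{a - b}^3.
\]
I would obtain this by applying Stokes' theorem on \(\partial \Omega\) to the closed loop formed by the geodesic traversed forward from \(\psi(a)\) to \(\psi(b)\) and the chart curve traversed backward: the line integral of \(A^\shortparallel\) around this loop equals \(\pm(\theta_1 - \theta_2)\), and equals also the integral of \(dA^\shortparallel\) over any spanning \(2\)-chain, hence is dominated by \(\norm{dA^\shortparallel}_{L^\infty}\) times the area of the chain. For the chain I would take the natural homotopy between the two paths written in exponential normal coordinates centred at \(\psi(a)\): there the geodesic is affine and the chart curve is a smooth perturbation agreeing at both endpoints, so a second-order Taylor expansion using the smoothness of \(\psi\) and of \(\exp\) gives pointwise separation \(O(\abs{a-b}^2)\); combined with the common length \(\asymp \abs{a-b}\), this yields area \(O(\abs{a-b}^3)\).

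Setting \(\beta \defeq \norm{dA^\shortparallel}_{L^\infty}\), the triangle inequality then gives
\[
\bigabs{e^{i\theta_1} u(\psi(b)) - u(\psi(a))}^p
\le 2^{p-1} \bigabs{e^{i\theta_2} u(\psi(b)) - u(\psi(a))}^p + C \min\bigl\{1, \beta \abs{a - b}^3\bigr\}^p \abs{u(\psi(b))}^p,
\]
and symmetrically with \(\theta_1\) and \(\theta_2\) interchanged. After dividing by \(\abs{a - b}^{d + sp}\) and integrating over \(W \times W\), both directions of the stated inequality reduce to controlling the error integral
\[
I (\beta) \defeq \iint\limits_{W \times W} \frac{\min\bigl\{1, \beta \abs{a - b}^3\bigr\}^p \abs{u \compose \psi (b)}^p}{\abs{a - b}^{d + sp}} \dif a \dif b.
\]
Since \(3p - sp > 0\), using the crude bound \(\min \le \beta \abs{a-b}^3\) on all of \(W \times W\) (bounded) gives \(I(\beta) \le C \beta^p \int_W \abs{u \compose \psi}^p \dif a\), while splitting the inner integral at a threshold \(R > 0\), applying \(\min \le \beta \abs{a-b}^3\) on \(\{\abs{a-b} \le R\}\) and \(\min \le 1\) on \(\{\abs{a-b} > R\}\), and optimising the resulting \(\beta^p R^{3p - sp} + R^{-sp}\) at \(R \sim \beta^{-1/3}\) yields \(I(\beta) \le C \beta^{sp/3} \int_W \abs{u \compose \psi}^p \dif a\). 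Taking the minimum of the two bounds produces the factor \(\min\{\norm{dA^\shortparallel}_{L^\infty}^p, \norm{dA^\shortparallel}_{L^\infty}^{sp/3}\}\) that appears in the statement.

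The main obstacle is the cubic exponent in the Stokes estimate: the critical \(sp/3\)-power of \(\norm{dA^\shortparallel}_{L^\infty}\) issues directly from the optimisation \(R \sim \beta^{-1/3}\) and is available only because the endpoints match to first order in the exponential chart, so the area of the spanning \(2\)-chain is \(O(\abs{a-b}^3)\) rather than \(O(\abs{a-b}^2)\); this second-order matching is where the smoothness of \(\partial\Omega\) and of \(\psi\) is genuinely used.
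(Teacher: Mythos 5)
Your proposal is correct and follows essentially the same route as the paper: change of variables to pull the boundary integral to the chart, a triangle inequality to isolate the phase discrepancy, a Stokes-theorem estimate showing the two phases differ by $O(\norm{dA^\shortparallel}_{L^\infty}\,\abs{a-b}^3)$, and the split-and-optimize integration of the resulting $\min\{1,\beta\abs{a-b}^3\}^p/\abs{a-b}^{d+sp}$ kernel. The only (inessential) presentational difference is that you place the straight path in exponential normal coordinates on $\partial\Omega$ and treat the chart curve as the perturbation, whereas the paper (in its \cref{lemma_transport_intpot}) works in $W$ with the Euclidean segment as the straight path and bounds $\tilde\gamma'' = (\psi^{-1}\compose\gamma)''$ via the geodesic equation.
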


In the proof of Lemma~\ref{lemma_fract_domain_boundary_local}, we use the following result which  relates the potential \(\intpot[\partial \Omega]{A}\) to the potential \(\intpot{\psi^*\!A}\) via local charts.

\begin{lemma}
  \label{lemma_transport_intpot}
  Let \(d \ge 1\), let \(W \subset \Rset^{d}\) be open and bounded,  and let \(\psi : W \to \partial \Omega\) be a diffeomorphism up to the boundary to its image. 
  If \(W\) and \(\psi(W)\) are geodesically convex, 
then there exists a positive constant $C$ such that for every \(x, y \in W\), we have 
\begin{equation*}
    \bigabs{
        \intpot{\psi^*\! A^{\shortparallel}} (x, y) 
      - 
        \intpot[\partial \Omega]{A^{\shortparallel}} (\psi (x), \psi (y))
      }
  \le
    C
    \norm{d A^{\shortparallel}}_{L^\infty (\psi (W))}
    \abs{y - x}^3.
\end{equation*}
\end{lemma}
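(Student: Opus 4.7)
The plan is to recast both quantities as line integrals of the same pulled-back \(1\)-form on \(W\), interpret their difference via Stokes' theorem as the integral of the \(2\)-form \(\psi^* dA^{\shortparallel}\) over a small surface joining the two paths, and then show that this surface has area of order \(\abs{y-x}^3\). The gain of one power beyond the expected \(\abs{y-x}^2\) comes from the fact that, in the chart \(\psi\), the two competing paths agree at both endpoints.

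Set \(\tilde A \defeq \psi^* A^{\shortparallel}\) and define \(\tilde c_1, \tilde c_2 : [0,1] \to W\) by \(\tilde c_1 (t) \defeq (1 - t)\,x + t\,y\) and \(\tilde c_2 (t) \defeq \psi^{-1} (\gamma (t))\), where \(\gamma\) is the minimizing geodesic from \(\psi (x)\) to \(\psi (y)\) in \(\partial \Omega\). A direct change of variable shows that \(\intpot{\psi^*\! A^{\shortparallel}} (x, y) = \int_{\tilde c_1} \tilde A\) and \(\intpot[\partial \Omega]{A^{\shortparallel}} (\psi (x), \psi (y)) = \int_{\tilde c_2} \tilde A\). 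I would then introduce the parametrized \(2\)-chain \(\tilde S : [0,1]^2 \to W\) given by
\begin{equation*}
\tilde S (t, u) \defeq (1 - u)\, \tilde c_1 (t) + u\, \tilde c_2 (t),
\end{equation*}
which lies in \(W\) by convexity. Its boundary is traversed by \(\tilde c_1\) forward and \(\tilde c_2\) backward (the two other edges being degenerate), so Stokes' theorem gives
\begin{equation*}
\intpot{\psi^*\! A^{\shortparallel}} (x, y) - \intpot[\partial \Omega]{A^{\shortparallel}} (\psi (x), \psi (y))
= \int_{\tilde S} d \tilde A = \int_{\tilde S} \psi^* dA^{\shortparallel}.
\end{equation*}

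It then suffices to estimate the area of \(\tilde S\). One immediately has \(\abs{\partial_t \tilde S} \le \abs{\tilde c_1'} + \abs{\tilde c_2'} \le C\,\abs{y-x}\), since \(\tilde c_2\) has constant speed equal to the geodesic distance \(d_{\partial \Omega}(\psi(x), \psi(y))\) which is comparable to \(\abs{y-x}\). The core estimate is
\begin{equation*}
\abs{\partial_u \tilde S (t, u)}
= \abs{\tilde c_2 (t) - \tilde c_1 (t)}
\le C\, \abs{y - x}^2
\qquad \text{for every } t \in [0, 1].
\end{equation*}
To prove this, set \(f \defeq \tilde c_2 - \tilde c_1\), which satisfies \(f (0) = f (1) = 0\). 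Since \(\tilde c_2\) is a geodesic in the Riemannian metric \(\psi^* g_{\partial \Omega}\), it satisfies \(\ddot{\tilde c}_2^{\,k} = - \Gamma^k_{ij} (\tilde c_2)\, \dot{\tilde c}_2^{\,i}\, \dot{\tilde c}_2^{\,j}\), where the Christoffel symbols are bounded on \(\overline W\); combined with \(\abs{\dot{\tilde c}_2} \le C\,\abs{y-x}\) and \(\ddot{\tilde c}_1 = 0\), this yields \(\abs{f''} \le C\,\abs{y-x}^2\), and the elementary Dirichlet Green's function bound for \(-d^2/dt^2\) on \([0,1]\) gives \(\abs{f (t)} \le C\,\abs{y-x}^2\) uniformly. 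Inserting these estimates,
\begin{equation*}
  \Bigabs{\int_{\tilde S} \psi^* dA^{\shortparallel}}
  \le \iint\limits_{[0,1]^2} \abs{\psi^* dA^{\shortparallel}} (\tilde S)\,
  \abs{\partial_t \tilde S}\, \abs{\partial_u \tilde S}\; dt\, du
  \le C\,\norm{dA^{\shortparallel}}_{L^\infty (\psi (W))}\, \abs{y - x}^3,
\end{equation*}
which is the claim. (For \(\abs{y-x}\) bounded away from zero, the inequality is trivial since both sides are bounded, so one only truly needs the small-displacement regime.)

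The main obstacle is the pointwise bound \(\abs{\tilde c_2 (t) - \tilde c_1 (t)} \le C \abs{y - x}^2\); once this is in hand, the remainder is a routine area computation combined with Stokes' theorem. This bound encodes the familiar fact that in any smooth chart geodesics coincide with straight lines up to second order at every pair of endpoints, and its quantitative proof reduces, via the geodesic ODE, to the elementary observation that a function vanishing at \(0\) and \(1\) with \(\norm{f''}_{L^\infty} \le M\) satisfies \(\norm{f}_{L^\infty} \le M\).
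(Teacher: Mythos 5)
Your proof is correct and follows essentially the same route as the paper: both express the difference of the two potentials via Stokes' theorem on the linear homotopy in \(W\) between the straight segment \((1-t)x + ty\) and the pulled-back geodesic \(\tilde\gamma = \psi^{-1}\compose\gamma\), and both derive the crucial bound \(\abs{\tilde\gamma(t) - ((1-t)x+ty)} \le C\abs{y-x}^2\) from the geodesic equation \(\abs{\tilde\gamma''}\le C\abs{y-x}^2\) combined with the vanishing of the difference at \(t=0,1\). The only cosmetic difference is that you make the Dirichlet Green's function argument for the second-order bound fully explicit, whereas the paper states the resulting pointwise estimate \(\abs{(1-t)x+ty-\tilde\gamma(t)}\le C\,t(1-t)\abs{y-x}^2\) directly.
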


\begin{proof}
\resetconstant%
For every \(x, y \in W\), there exists a unique minimizing geodesic \(\gamma : [0, 1] \to \psi (W)\) such that \(\gamma (0) = \psi (x)\) and \(\gamma (1) = \psi (y)\). 
Since \(\gamma\) is a geodesic, the function \(\Tilde{\gamma} = \psi^{-1} \compose \gamma\) satisfies the equation
\begin{equation*}
  \label{eq_Aithi3ooKoopo}
  \Tilde{\gamma}'' (t) 
  = 
  \Gamma (\Tilde{\gamma} (t))[\Tilde{\gamma}' (t), \Tilde{\gamma}' (t)],
\end{equation*}
where for every \(z \in \psi (W)\), \(\Gamma (z)\) is a symmetric bilinear mapping (see,  e.g.,  \cite[Chapter 3]{Carmo}).
There exists thus a constant \(\Cl{cst_ooH3tahqu9aeZ}\) such that for every \(t \in [0, 1]\),
\begin{equation*}
  \abs{\Tilde{\gamma}'' (t)} \le  \Cr{cst_ooH3tahqu9aeZ} \abs{y - x}^2.
\end{equation*}
Since \(\Tilde{\gamma} (0) = x\) and \(\Tilde{\gamma} (1) = y\), we deduce that for every \(t \in [0, 1]\) we have 
\[
    \bigabs{(1 - t) x + t y - \Tilde{\gamma} (t)}
  \le 
     \C
      \, 
      t (1 - t)
      \,
      \abs{y - x}^2.
\]
We have then by the Stokes theorem
\begin{multline*}
\intpot{\psi^*\! A^{\shortparallel}} (y, x) 
- 
\intpot[\partial \Omega]{A^{\shortparallel}} (\psi (y), \psi (x))
\\
=
\int_{[0, 1]^2} d A \bigl((1 -s) ((1 - t) x + t y) + s \Tilde{\gamma} (t)\bigr) [(1 -t)x + t y - \Tilde{\gamma} (t), (1 - s) \Tilde{\gamma}' (t) + s (y - x)] \dif t \dif s,
\end{multline*}
and therefore,
\[
\bigabs{
\intpot{\psi^*\! A^{\shortparallel}} (y, x) 
- 
\intpot[\partial \Omega]{A^{\shortparallel}} (\psi (y), \psi (x))
}
\le 
\C
\norm{dA^{\shortparallel}}_{L^\infty(\psi (W))}
\abs{y - x}^3.\qedhere
\]
\end{proof}

We are ready to give the

\begin{proof}[Proof of \cref{lemma_fract_domain_boundary_local}]
  \resetconstant
By the change of variable formula, we have 
\begin{multline*}
  \iint\limits_{\psi(W) \times \psi(W)} 
    \frac
      {\bigabs{e^{i\,\intpot[\partial \Omega]{A^{\shortparallel}} (x, y)} u (y) - u (x)}^p}
      {d_{\partial \Omega} (y,x)^{d + s p}} \dif y \dif x\\[-.8em]
  =
    \iint\limits_{W \times W} 
      \frac
        {\bigabs{e^{i\,\intpot[\partial \Omega]{A^{\shortparallel}} (\psi(x), \psi(y))} u (\psi (y)) - u (\psi(x))}^p}
        {d_{\partial \Omega} (\psi(y), \psi(x))^{d + s p}} 
        \operatorname{Jac} \psi (y) \operatorname{Jac} \psi (x) 
        \dif y \dif x
        .
\end{multline*}
We thus obtain 
\begin{multline*}
  \iint\limits_{\psi(W) \times \psi(W)} 
  \frac
  {\bigabs{e^{i\,\intpot[\partial \Omega]{A^{\shortparallel}} (x, y)} u (y) - u (x)}^p}
  {d_{\partial \Omega} (y,x)^{d + s p}} \dif y \dif x\\[-.8em]
  \le 
  \C \Biggl(\;
  \iint\limits_{W \times W} 
  \frac
  {\bigabs{e^{i\,\intpot{\psi^*\! A^{\shortparallel}} (x, y)} u (\psi (y)) - u (\psi(x))}^p}
    {\abs{y - x}^{d + s p}} 
  \dif y \dif x\\
  +
    \iint\limits_{W \times W} 
  \frac
  {\Bigabs{e^{i\,\intpot[\partial \Omega]{A^{\shortparallel}}(\psi(x), \psi(y))} - e^{i\,\intpot{\psi^*\! A^{\shortparallel}} (x, y)}}^p \abs{u (\psi (y))}^p}
    {\abs{y - x}^{d + s p}} 
  \dif y \dif x
  \Biggr).
\end{multline*}
Since,  by \cref{lemma_transport_intpot},  for every \(x, y \in W\),
\[
  \frac{\Bigabs{e^{i\,\intpot[\partial \Omega]{A^{\shortparallel}}(\psi(x), \psi(y))} - e^{i\,\intpot{\psi^*\! A^{\shortparallel}} (x, y)}}^p}{\abs{y - x}^{d + sp}}
  \le \frac{\C \min \{\norm{d A^{\shortparallel}}_{L^\infty (\psi (W))}^p \abs{y - x}^{3p}, 1\}}{\abs{y - x}^{d + sp}},
\]
the first estimate then  follows from the facts
\[
\int_{\Rset^{d}} \frac{\min \bigl\{\norm{d A^{\shortparallel}}_{L^\infty(\psi (W))}^p \abs{z}^{3p}, 1\bigr\}}{\abs{z}^{d + sp}} \dif z 
\le \C\, \norm{d A^{\shortparallel}}_{L^\infty(\psi (W))}^\frac{sp}{3}
\]
and 
\[
\int_{B (0, {\operatorname{diam} (W) )}} \frac{\norm{d A^{\shortparallel}}_{L^\infty(\psi (W))}^p \abs{z}^{3 p}}{\abs{z}^{d + sp}} \dif z 
\le \C\, \norm{d A^{\shortparallel}}_{L^\infty(\psi (W))}^{p}.
\]

The proof of the second estimate follows similarly.
\end{proof}

We have the following estimate on the traces of magnetic Sobolev spaces.

\begin{proposition}
\label{proposition_Bdd_dA_trace_domain}
Let \(d \ge 1\), let \(1 \le p < +\infty\) and let \(\Omega \subset \Rset^{d + 1}\) be a bounded domain of class $C^1$.
There exists a constant \(C_{\Omega, p}> 0\) such that if \(A \in C^1 (\Bar{\Omega}, \bigwedge^1 \Rset^{d})\)
and \(\norm{dA}_{L^\infty (\Omega)} \le \beta\) and \(U \in W^{1, p}_{A} (\Omega)\), then 
\(u \defeq \operatorname{Tr} U\) satisfies the estimate
\begin{equation*}
  \iint\limits_{\substack{(x, y) \in \partial \Omega \times \partial \Omega\\
      d_{\partial \Omega} (y, x) \le \inj_{\partial \Omega}}}
    \frac{\bigabs{
      e^{
          i\,\intpot[\partial \Omega]{A^\shortparallel} (x, y) 
        } 
      u (y) 
    - 
      u (x)
      }^p}{\abs{y - x}^{d + p - 1}}
    \dif x \dif y
  \le
  C_{\Omega, p}
          \int_{\Omega}
          \bigabs{\nabla_A U }^p + ( 1 + \beta^\frac{p}{2}) \abs{U}^p.
\end{equation*}
Here for $z \in \partial \Omega$, $A^{\shortparallel}(z) \defeq A (z) - (  A(z) \cdot \nu (z)) \nu (z)$ where $\nu (z)$ denotes a unit normal vector of $\partial \Omega$ at $z$. 
\end{proposition}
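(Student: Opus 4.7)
The plan is to reduce the estimate to the localized half-space trace inequality \cref{proposition_Bdd_dA_trace_ball} via a finite atlas, then transfer the resulting integrals back to \(\Omega\) and \(\partial \Omega\) using \cref{lemma_covariant_derivative_pullback} and \cref{lemma_fract_domain_boundary_local} respectively.

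First I would select, using the compactness and \(C^1\)-regularity of \(\partial \Omega\), finitely many boundary-straightening diffeomorphisms \(\Psi_i : B(0, 2R_i) \times [0, 2R_i] \to \overline{\Omega}\) whose restrictions \(\psi_i \defeq \Psi_i(\cdot, 0)\) send \(B(0, 2R_i)\) onto geodesically convex subsets of \(\partial\Omega\) and whose smaller images \(\psi_i(B(0, R_i))\) already cover \(\partial\Omega\). Denoting by \(\delta > 0\) the Lebesgue number of this open cover of \(\partial\Omega\), every pair \((x, y) \in \partial\Omega \times \partial\Omega\) with \(d_{\partial\Omega}(y, x) \le \delta\) lies in a single \(\psi_i(B(0, 2R_i))\).

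On each chart, the pulled-back potential \(\Psi_i^* A\) satisfies \(\norm{d(\Psi_i^* A)}_{L^\infty} \le C_\Omega \beta\) because the exterior derivative commutes with pullback. I would apply \cref{proposition_Bdd_dA_trace_ball} to \(U \compose \Psi_i\) with the critical choice \(s = 1 - 1/p\), so that the weight \(t^{1 - (1 - s)p}\) collapses to \(1\). Combining with the pullback identity \eqref{lem-TO-p1} of \cref{lemma_covariant_derivative_pullback} and the change of variables \((z, t) \mapsto \Psi_i(z, t)\), one obtains
\begin{equation*}
\iint\limits_{B(0, R_i) \times B(0, R_i)}
\frac{\bigabs{e^{i\,\intpot{\psi_i^* A^\shortparallel}(x, y)} u(\psi_i(y)) - u(\psi_i(x))}^p}{\abs{y - x}^{d + p - 1}}
\dif x \dif y
\le C_\Omega \int_\Omega \abs{\nabla_A U}^p + \beta^{p/2} \abs{U}^p.
\end{equation*}
The first inequality of \cref{lemma_fract_domain_boundary_local} then rewrites the left-hand side as the intrinsic Gagliardo seminorm on \(\psi_i(B(0, R_i)) \times \psi_i(B(0, R_i))\), at the cost of an extra term of the form \(C_\Omega \min(\beta^p, \beta^{(p-1)/3}) \int_{\psi_i(B(0, R_i))} \abs{u}^p\).

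Summing over the finite atlas dominates the diagonal region \(\{d_{\partial\Omega}(y, x) \le \delta\}\), while the far region \(\{\delta < d_{\partial\Omega}(y, x) \le \inj_{\partial\Omega}\}\) is immediately controlled by \(C_{\Omega, p, \delta} \norm{u}_{L^p(\partial\Omega)}^p\), since its denominator is bounded below. Both this far contribution and the error term from \cref{lemma_fract_domain_boundary_local} are absorbed into the right-hand side via an interpolated trace inequality: applying \cref{proposition_Bdd_dA_trace} locally with \(s = 1 - 1/p\) yields \(\norm{u}_{L^p(\partial\Omega)}^p \le C_\Omega \norm{\nabla_A U}_{L^p(\Omega)} \norm{U}_{L^p(\Omega)}^{p-1}\), whence Young's inequality with parameter \(\epsilon = \beta^{-(p-1)/3}\) gives \(\beta^{(p-1)/3} \norm{u}_{L^p(\partial\Omega)}^p \le \norm{\nabla_A U}_{L^p(\Omega)}^p + C(1 + \beta^{p/2}) \norm{U}_{L^p(\Omega)}^p\), using \((p-1)/3 \le p/2\) for \(p \ge 1\). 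The main technical obstacle is the careful tracking of constants and in particular the handling of the error term from \cref{lemma_fract_domain_boundary_local}, which is precisely where the \((1 + \beta^{p/2})\) factor on \(\abs{U}^p\) in the statement originates; the remaining steps are standard atlas bookkeeping made transparent by the gauge invariance of the local estimates.
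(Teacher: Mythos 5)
Your proposal is correct and follows essentially the same route as the paper: cover \(\partial\Omega\) by geodesically convex charts, apply the local half-space trace estimate (\cref{proposition_Bdd_dA_trace_ball}) with \(s = 1 - 1/p\), transfer back via \cref{lemma_covariant_derivative_pullback} and \cref{lemma_fract_domain_boundary_local}, and absorb the resulting \(\beta^{(p-1)/3}\int_{\partial\Omega}\abs{u}^p\) error (and the far-field contribution, which the paper leaves implicit) via the interpolated \(L^p\)-trace bound and Young's inequality. The only cosmetic difference is that the paper gets the interpolated bound from the classical trace theorem plus the diamagnetic inequality (its display \eqref{eq_ieNgi9uxoo0ai}) rather than from \cref{proposition_Bdd_dA_trace}, but these are the same estimate.
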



\begin{proof}%
[Proof of \cref{proposition_Bdd_dA_trace_domain}]%
\resetconstant%
Without loss of generality, we can assume that \(\beta \ge 1\). By density argument on \(W^{1, p} (\Omega, \Cset)\), we can assume that \(u \in C^\infty (\Bar{\Omega})\).
We first observe that, by the classical trace theory and the diamagnetic inequality, 
\begin{equation}
  \label{eq_ieNgi9uxoo0ai}
  \int_{\partial \Omega}
  \abs{u}^p
  \le
  \C
  \Biggl(
  \int_{\Omega}
  \bigabs{\nabla_A U}^p + \abs{U}^p \Biggr)^\frac{1}{p}
  \Biggl(\int_{\Omega} \abs{U}^p\Biggr)^{1 - 1/p}
  .
\end{equation}
Since \(\partial \Omega\) is  compact and $\Omega$ is of class $C^1$, there exists maps \(\psi_i : B (0, 1) \times (-1, 1) \to \Rset^d\) that are diffeomorphism on their image such that \(\psi_i (B (0, 1) \cap (0, 1)) = \psi_i (B (0, 1) \times (0, 1)) \cap \Omega\),
\(\psi_i (B (0, 1) \times \{0\}) = \psi (B (0, 1)) \cap \partial \Omega\), \(\partial \Omega \subset \bigcup_{i = 1}^\ell \psi_i (B (0, 1/2) \times\{0\})\) and for every \(i \in \{1, \dotsc, \ell\}\), 
\(\psi_i (B (0, 1) \times \{0\})\) is geodesically convex.
Applying \cref{proposition_Bdd_dA_trace_ball} for  \(i \in \{1, \dotsc, \ell\}\), we have 
\begin{multline*}
  \iint\limits_{B (0, 1) \times B (0, 1)}
  \frac{\Bigabs{
      e^{
        i\,\intpot{\psi_i^*\! A^{\shortparallel}} ( x, y)) 
      } 
      ( u (\psi_i (y, 0)) 
      - 
       u (\psi_i (x, 0))
    }^p}{\abs{y - x}^{d + p - 1}}
  \dif x \dif y\\[-1em]
  \le
  \C
  \int\limits_{B (0, 1) \times [0, 1]}
  {\bigabs{\nabla_A ( U \compose \psi_i)}^p +\beta^\frac{p}{2} \abs{ U \compose \psi_i}^p}.
\end{multline*}
Since $\beta \ge 1$, in view of \cref{lemma_covariant_derivative_pullback} and \cref{lemma_fract_domain_boundary_local}, this implies 
that 
\begin{multline}
  \label{eq_mohs2Yee3thah}
  \iint\limits_{\psi_i (B (0, 1) \times \{0\})  \times \psi_i (B (0, 1) \times \{0\})}
  \hspace{-2em}
  \frac{\Bigabs{
      e^{
        i\,\intpot[\partial \Omega]{A^{\shortparallel}} (x, y) 
      } 
      u (y) 
      - 
      u (x)
    }^p}{d_{\partial \Omega} (y, x)^{d + p - 1}}
  \dif x \dif y\\
  \le
  \C\Biggl(\;
  \int\limits_{\psi_i (B (0, 1) \times (0, 1))}
  \hspace{-2em}
  {\bigabs{\nabla_A U}^p +\beta^\frac{p}{2} \abs{U }^p}
  + \beta^\frac{p - 1}{3} \int\limits_{\psi_i (B (0, 1) \times \{0\})}
  \abs{u}^p\Biggr).
  \end{multline}
We have then by Young's inequality and by \eqref{eq_ieNgi9uxoo0ai}
\begin{equation}
\label{eq_ReisoghieDee9loiz8x}
\begin{split}
 \beta^\frac{p - 1}{3} \int\limits_{\psi_i (B (0, 1) \times \{0\})}
  \abs{u}^p
  &\le \beta^\frac{p - 1}{2} \int\limits_{\psi_i (B (0, 1) \times \{0\})}
  \abs{u}^p\\
  &\le \C
  \int\limits_{\psi_i(B (0, 1) \times \{0\})}
  \bigabs{\nabla_A U}^p 
  + 
  \bigl(1 + \beta^\frac{p}{2}\bigr) \abs{U}^p.
\end{split}
\end{equation}
The conclusion follows by summing for \(i \in \{1, \dotsc, \ell\}\) the estimates resulting the combination of \eqref{eq_mohs2Yee3thah} and \eqref{eq_ReisoghieDee9loiz8x}.
\end{proof}

Concerning the extension, we have 

\begin{proposition}
  \label{proposition_Bdd_dA_extension_domain}
  Let \(d  \ge 1\), let \(1 \le p  <  +\infty\) and let \(\Omega \subset \Rset^d\) be a bounded domain of class $C^1$.
  There exists a constant \(C_{\Omega, p}> 0\) such that if \(A \in C^1 (\Bar{\Omega}, \bigwedge^1 \Rset^{d})\), if \(\norm{dA}_{L^\infty (\Omega)} \le \beta\) and if \(u \in W^{1 - 1/p, p} (\partial \Omega)\), then there exists \(U \in W^{1 , p} (\Omega) \cap C^\infty (\Omega, \Cset)\) such that \(\trace_{\partial \Omega} U = u\) and 
  \begin{equation*}
    \int_{\Omega}
    \bigabs{\nabla_A U }^p
    \le 
    C_{\Omega, p}
    \Biggl(
    \iint\limits_{\substack{(x, y) \in \Omega \times \Omega\\
        d_{\partial \Omega} (y, x) \le \inj_{\partial \Omega}}}
    \frac{\bigabs{
        e^{
          i\,\intpot[\partial \Omega]{A} (x, y) 
        } 
        u (y) 
        - 
        u (x)
      }^p}{\abs{y - x}^{d + p - 1}}
    \dif x \dif y
    + \bigl(1 + \beta^\frac{p - 1}{2}\bigr) \int_{\partial \Omega} \abs{u}^p
    \Biggr)
  \end{equation*}
  and 
  \begin{equation*}
    \int_{\Omega}
    \abs{U}^p
    \le
    \frac{C_{\Omega, p}}{1 +\beta^\frac{p}{2}}
    \int_{\partial \Omega} \abs{u}^p.
  \end{equation*}
\end{proposition}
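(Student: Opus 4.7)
The plan is to mirror the argument of \cref{proposition_Bdd_dA_trace_domain} in reverse: localize the boundary datum \(u\) via a partition of unity on \(\partial \Omega\), pull each localized piece back to a ball via a boundary chart, apply the half-space extension \cref{proposition_Bdd_dA_extension_local}, push the extensions forward and sum. Fix finitely many diffeomorphisms \(\psi_i : B(0, 2R) \times (-R, R) \to \Rset^{d+1}\), \(i \in \{1, \dots, \ell\}\), for some \(R > 0\) depending only on \(\Omega\), such that each \(\psi_i\) maps \(B(0, 2R) \times \{0\}\) onto a geodesically convex piece of \(\partial \Omega\), \(\psi_i(B(0, 2R) \times (0, R)) \subset \Omega\), and the sets \(\psi_i(B(0, R) \times \{0\})\) cover \(\partial \Omega\). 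Let \(\{\eta_i\}\) be a smooth partition of unity on \(\partial \Omega\) subordinate to this cover and set \(v_i(y) \defeq (\eta_i u) \compose \psi_i(y, 0)\) on \(B(0, 2R)\).

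The crucial choice is \(s = 1 - 1/p\), for which \(1 - (1 - s)p = 0\) and the weighted integrals of \cref{proposition_Bdd_dA_extension_local} become unweighted. With \(\tilde{\beta} \defeq \max(\beta, R^{-2}) \simeq 1 + \beta\), so that the hypothesis \(\|d(\psi_i^*\! A)\|_{L^\infty} + R^{-2} \le C_\Omega\, \tilde{\beta}\) of \cref{proposition_Bdd_dA_extension_local} is met, applying that proposition to \(v_i\) with the pulled-back potential \(\psi_i^*\! A\) produces \(V_i \in C^\infty(B(0, R) \times (0, R), \Cset)\) with \(V_i(\cdot, 0) = v_i\) and
\begin{equation*}
\int\limits_{B(0, R) \times (0, R)} \!\!\!\!\!\bigabs{\nabla_{\psi_i^*\! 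A} V_i}^p \le C \Bigl( \seminorm{v_i}_{W^{1 - 1/p, p}_{\psi_i^*\! A^\shortparallel}(B(0, 2R))}^p + \tilde{\beta}^{(p - 1)/2} \int \abs{v_i}^p \Bigr), \qquad \int \abs{V_i}^p \le \frac{C}{\tilde{\beta}^{1/2}} \int \abs{v_i}^p.
\end{equation*}
Set \(U_i \defeq V_i \compose \psi_i^{-1}\), extended by zero in \(\Omega\), and \(U \defeq \sum_i U_i\); \cref{lemma_covariant_derivative_pullback} transfers these bounds to \(\Omega\) with constants depending only on \(\Omega\), and \(\trace U = \sum_i \eta_i u = u\).

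It remains to replace the pulled-back Gagliardo seminorm of \(v_i\) by the intrinsic Gagliardo seminorm of \(u\). Starting from the splitting
\begin{equation*}
(\eta_i u)(y) e^{i\theta} - (\eta_i u)(x) = \eta_i(y)\bigl(u(y) e^{i\theta} - u(x)\bigr) + \bigl(\eta_i(y) - \eta_i(x)\bigr) u(x),
\end{equation*}
the smoothness of \(\eta_i\) controls the singular integral of the second term, while the first estimate of \cref{lemma_fract_domain_boundary_local}, pushed forward through \(\psi_i\), transfers the first term to the intrinsic seminorm on a neighborhood of \(\psi_i(B(0, 2R) \times \{0\})\), at the cost of an \(L^p\) correction of order \((1 + \beta^{(p-1)/3}) \int \abs{u}^p\). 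Since \(\beta^{(p - 1)/3} \le 1 + \beta^{(p - 1)/2}\), this correction is absorbed into the desired right-hand side; summing over \(i\) produces both announced inequalities.

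The main obstacle is the careful bookkeeping of the powers of \(\beta\). The decay of \(\int |U|^p\) is driven by the fact that the extension of \cref{proposition_Bdd_dA_extension} lives in a boundary layer of thickness \(a = \tilde{\beta}^{-1/2}\) coming from the cutoff \(\theta\) in the formula \eqref{eq_def_extension_Lip}; one must verify that every curvature-induced correction, from both \cref{lemma_fract_domain_boundary_local} and the differentiation of the partition of unity, fits within the error budget \((1 + \beta^{(p - 1)/2}) \int |u|^p\) on the right-hand side of the gradient estimate.
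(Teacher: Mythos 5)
Your proposal takes a genuinely different route from the paper. The paper extends first and localizes afterwards: it applies \cref{proposition_Bdd_dA_extension_local} to the \emph{full} datum \(u \compose \psi_i\) in each chart, obtains local extensions \(U_i\), and only then forms \(U = \sum_i \eta_i (U_i \compose \psi_i^{-1})\) using a partition of unity \(\{\eta_i\}\) subordinate to an \emph{interior} cover of \(\Bar\Omega\) (supported in \(\psi_i(B(0,1/2)\times[0,1/2])\)). The price is a Leibniz cross-term \(\nabla_A U = \sum_i \bigl((U_i \compose \psi_i^{-1}) D\eta_i + \eta_i \nabla_A(U_i \compose \psi_i^{-1})\bigr)\), whose first summand is absorbed using the \(L^p\) estimate on \(U_i\). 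You instead localize on \(\partial\Omega\) first (\(v_i = (\eta_i u)\compose \psi_i\)), extend each piece, and sum; this avoids the Leibniz cross-term but forces you to compare the Gagliardo seminorm of \(\eta_i u\) to that of \(u\) via the product splitting, and it requires the local extensions \(V_i\) to have bounded support so that the push-forward can be extended by zero -- which holds by inspection of \eqref{eq_def_extension_Lip}, provided the cutoff thickness \(a = \tilde\beta^{-1/2}\) is taken smaller than, say, \(R/2\) (your \(\tilde\beta = \max(\beta, R^{-2})\) only guarantees \(a \le R\), so tighten this slightly). Both routes are sound and of comparable difficulty; yours trades the Leibniz bookkeeping for the product-rule bookkeeping on the boundary. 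Small corrections: you need the \emph{second} inequality of \cref{lemma_fract_domain_boundary_local} (pulled-back seminorm \(\lesssim\) intrinsic seminorm plus \(L^p\) error), not the first; and you should note that both your derivation and the paper's own proof of the \(L^p\) bound only extract the factor \(\tilde\beta^{-(1-s)p/2} = \tilde\beta^{-1/2}\simeq (1+\beta)^{-1/2}\) from \cref{proposition_Bdd_dA_extension_local} at \(s=1-1/p\), whereas the statement advertises the stronger decay \((1+\beta^{p/2})^{-1}\); for \(p>1\) these differ, so as written the displayed exponent in the second conclusion is not what either argument produces.
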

\begin{proof}
Since \(\partial \Omega\) is  compact and $\Omega$ is of class $C^1$, there exists maps \(\psi_i : B (0, 1) \times (-1, 1) \to \Rset^d\) that are diffeomorphism on their image which is geodesically convex such that \(\psi_i (B (0, 1) \cap [0, 1)) = \psi_i (B (0, 1) \times (0, 1)) \cap \Omega\), \(\psi_i (B (0, 1) \times \{0\}) = \psi_i (B (0, 1)) \cap \partial \Omega\) and \(\partial \Omega \subset \bigcup_{i = 1}^\ell \psi_i (B (0, 1/2) \times \{0\}\).
Moreover, there exist smooth functions \(\eta_1, \dotsc, \eta_\ell\) in \(C^\infty (\Bar{\Omega})\) such that \(\supp \eta_i \subset \psi_i (B (0, 1/2) \times [0, 1/2])\) and \(\sum_{i = 1}^\ell \eta_i = 1\) in \(\Omega\).

By \cref{proposition_Bdd_dA_extension_local}, for every \(i \in \{1, \dotsc, \ell\}\), there exists a function \(U_i \in W^{1, p}_{\psi_i^*\! A} (B (0, 1/2) \times [0, 1/2]) \cap C^\infty (B (0, 1/2) \times [0, 1/2])\) such that \(\trace U_i = u \compose \psi_i\) on \(B (0, 1/2) \times \{0\}\) and moreover, 
\begin{multline*}
  \int_{B (0, 1/2) \times [0, 1/2]} 
  \abs{\nabla_{\psi_i^*\! A} U_i}^p
  \le 
 C  \iint\limits_{B (0, 1) \times B (0, 1)}
  \frac{\Bigabs{
      e^{
        i\,\intpot{\psi_i^*\! A^{\shortparallel}} (x, y) 
      } 
      u (\psi_i (y, 0)) 
      - 
      u (\psi_i (x, 0))
    }^p}{\abs{y - x}^{d + p - 1}}
  \dif x \dif y\\
  + 
 C  \beta^\frac{p - 1}{2} 
  \int_{B (0, 1)} \abs{u (\psi_i (x, 0))}^p\dif x
\end{multline*}
and 
\[
\int_{B (0, 1/2) \times [0, 1/2]} 
\abs{U_i}^p
\le C \int_{B (0, 1)}
\abs{u \compose \psi_i}^p.
\]
Using \cref{lemma_covariant_derivative_pullback} and \cref{lemma_fract_domain_boundary_local}, we derive that 
\begin{multline*}
\int_{\psi_i (B (0, 1/2) \times [0, 1/2])} 
\abs{\nabla_A (U_i \compose \psi_i^{-1}) }^p \\
\le 
C \int_{\psi_i(B (0, 1)) \times \psi_i(B (0, 1))}
\frac{\bigabs{
    e^{
      i\,\intpot[\partial \Omega]{A^\shortparallel} (x, y) 
    } 
    u (y) 
    - 
    u (x)
  }^p}{\abs{y - x}^{d + p - 1}}
\dif x \dif y
+
\beta^\frac{p - 1}{2}
\int_{\psi_i (B (0, 1)\times \{0\})} \abs{u}^p
\end{multline*}
and 
\[
\int_{\psi_i (B (0, 1/2) \times [0, 1/2])} 
\abs{U_i}^p
\le \int_{\psi_i(B (0, 1) \times \{0\})}
\abs{u}^p.
\]
We define now \(U \defeq \sum_{i = 1}^\ell \eta_i U_i \compose \psi_i^{-1}\).
We have by the Leibnitz rule for covariant derivatives
\[
\nabla_A U = \sum_{i = 1}^\ell \bigl((U_i \compose \psi_i^{-1}) D \eta_i + \eta_i \nabla_A (U_i \compose \psi_i^{-1})\bigr),
\]
and we conclude.
\end{proof}

\Cref{proposition_Bdd_dA_trace_domain} and \cref{proposition_Bdd_dA_extension_domain}
imply the following semi-classical estimates.

\begin{proposition}
  \label{proposition_semiclassical}
  For every \(A \in C^1 (\Bar{\Omega}, \bigwedge^1 \Rset^d)\), there exists  a positive constant $C$ such that for every 
  \(\varepsilon > 0\), we have 
  \begin{multline*}
    C^{-1} \Biggl(
    \iint\limits_{\substack{(x, y) \in \Omega \times \Omega\\
        d_{\partial \Omega} (y, x) \le \inj_{\partial \Omega}}}
    \frac{\varepsilon^p\,\bigabs{
        e^{
          i\,\intpot[\partial \Omega]{A} (x, y)/\varepsilon 
        } 
        u (y) 
        - 
        u (x)
      }^p}{\abs{y - x}^{d + p - 1}}
    \dif x \dif y + \bigl(\varepsilon^p + \varepsilon^{\frac{p + 1}{2}}\bigr) \int_{\partial \Omega} \abs{u}^p\Biggr)\\
    \le 
    \inf \biggl\{ \int_{\Omega} \abs{\varepsilon D U + i A U}^p + \bigl(\varepsilon^p  + \varepsilon^\frac{p}{2} \bigr) \abs{U}^p
    \st U \in W^{1, p}_A(\Omega) \text{ and }  \trace_{\partial \Omega} U = u\biggr\}\\
    \le 
    C \Biggl(
    \iint\limits_{\substack{(x, y) \in \Omega \times \Omega\\
        d_{\partial \Omega} (y, x) \le \inj_{\partial \Omega}}}
    \frac{\varepsilon^p\,\bigabs{
        e^{
          i\,\intpot[\partial \Omega]{A} (x, y) / \varepsilon
        } 
        u (y) 
        - 
        u (x)
      }^p}{\abs{y - x}^{d + p - 1}}
    \dif x \dif y+ \bigl(\varepsilon^p  + \varepsilon^\frac{p + 1}{2}\bigr) \int_{\partial \Omega} \abs{u}^p\Biggr).
  \end{multline*}
\end{proposition}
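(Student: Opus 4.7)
The plan is to reduce the semi-classical estimates to \cref{proposition_Bdd_dA_trace_domain} and \cref{proposition_Bdd_dA_extension_domain} through the rescaling \(A \mapsto A/\varepsilon\). The key identity is that \(\varepsilon D U + i A U = \varepsilon \nabla_{A/\varepsilon} U\), so
\[
\int_\Omega \abs{\varepsilon DU + iAU}^p = \varepsilon^p \int_\Omega \abs{\nabla_{A/\varepsilon} U}^p,
\]
while simultaneously \(\intpot{A/\varepsilon}(x,y) = \intpot{A}(x,y)/\varepsilon\) and \(\norm{d(A/\varepsilon)}_{L^\infty} = \norm{dA}_{L^\infty}/\varepsilon\). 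Since \(A\) is fixed, the quantity \(\beta \defeq \norm{dA}_{L^\infty}/\varepsilon\) can be used as the constant going to infinity as \(\varepsilon \to 0\), and any occurrence of \(\norm{dA}_{L^\infty}\) arising from Propositions~\ref{proposition_Bdd_dA_trace_domain} and \ref{proposition_Bdd_dA_extension_domain} may be absorbed into \(C\).

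For the right inequality, I would apply \cref{proposition_Bdd_dA_extension_domain} to the rescaled potential \(A/\varepsilon\) to produce an extension \(U\) of \(u\), and multiply the two resulting estimates by \(\varepsilon^p\). The gradient estimate gives
\[
\int_\Omega \abs{\varepsilon DU + iAU}^p \le C\Bigl( \varepsilon^p \iint_{\cdots} \frac{\abs{e^{i \intpot[\partial\Omega]{A}(x,y)/\varepsilon} u(y) - u(x)}^p}{\abs{y-x}^{d+p-1}} + (\varepsilon^p + \varepsilon^{(p+1)/2}) \int_{\partial\Omega}\abs{u}^p\Bigr),
\]
since \(\varepsilon^p (1 + \beta^{(p-1)/2}) = \varepsilon^p + \norm{dA}^{(p-1)/2}\varepsilon^{(p+1)/2}\). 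The \(L^p\) estimate becomes \((\varepsilon^p + \varepsilon^{p/2}) \int_\Omega \abs{U}^p \le C(\varepsilon^p + \varepsilon^{(p+1)/2})\int_{\partial\Omega} \abs{u}^p\), which is verified by a short case analysis between \(\varepsilon \le 1\) and \(\varepsilon \ge 1\) to match the exponents.

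For the left inequality, I would apply \cref{proposition_Bdd_dA_trace_domain} to \(A/\varepsilon\) and multiply by \(\varepsilon^p\), yielding control of the rescaled Gagliardo seminorm \(\varepsilon^p \iint\) by \(\int_\Omega \abs{\varepsilon DU + iAU}^p + (\varepsilon^p + \varepsilon^{p/2}) \abs{U}^p\). The \(L^p(\partial\Omega)\) term appearing on the left with weight \(\varepsilon^p + \varepsilon^{(p+1)/2}\) is not directly furnished by those propositions; to recover it, I would use the interpolation trace inequality \eqref{eq_ieNgi9uxoo0ai} combined with the diamagnetic inequality applied to \(\nabla_{A/\varepsilon} U\). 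The algebraic identity
\[
\varepsilon^{(p+1)/2} \, X^{1/p} Y^{(p-1)/p} = (\varepsilon^{p} X)^{1/p}\,(\varepsilon^{p/2} Y)^{(p-1)/p},
\]
with \(X = \int_\Omega \abs{\nabla_{A/\varepsilon} U}^p + \abs{U}^p\) and \(Y = \int_\Omega \abs{U}^p\), followed by Young's inequality with conjugate exponents \(p\) and \(p/(p-1)\), produces exactly the weights \(\varepsilon^p\) and \(\varepsilon^{p/2}\) on the two right-hand side terms; the analogous identity with \(\varepsilon^p\) instead of \(\varepsilon^{(p+1)/2}\) handles the other half of the weight.

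The main obstacle will be this careful bookkeeping of exponents of \(\varepsilon\): one has to verify that every Young-type interpolation produces matching weights for \emph{every} \(\varepsilon > 0\), and not merely in an asymptotic regime, so that the final constant \(C\) genuinely depends only on \(A\), \(\Omega\) and \(p\). Apart from this, the argument is a direct scaling of the domain estimates already established.
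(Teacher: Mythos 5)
Your proposal is correct and follows exactly the route the paper signals: it gives no proof of this proposition beyond the remark that Propositions~\ref{proposition_Bdd_dA_trace_domain} and \ref{proposition_Bdd_dA_extension_domain} imply it, so your reconstruction is precisely the missing argument. The scaling identities \(\varepsilon\,\nabla_{A/\varepsilon}U = \varepsilon DU + iAU\), \(\intpot{A/\varepsilon} = \intpot{A}/\varepsilon\), and \(\norm{d(A/\varepsilon)}_{L^\infty} = \norm{dA}_{L^\infty}/\varepsilon\) are right; the recognition that the lower bound's \(L^p(\partial\Omega)\) term must come from the interpolation trace inequality \eqref{eq_ieNgi9uxoo0ai} (which holds with \(A/\varepsilon\) in place of \(A\) since it rests only on the diamagnetic inequality and the classical trace theorem) is the key observation; and the identity
\(\varepsilon^{(p+1)/2}X^{1/p}Y^{(p-1)/p} = (\varepsilon^p X)^{1/p}(\varepsilon^{p/2}Y)^{(p-1)/p}\)
followed by Young's inequality does produce the interior weights \(\varepsilon^p\) and \(\varepsilon^{p/2}\).

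One small point needs repair: with your choice \(\beta = \norm{dA}_{L^\infty}/\varepsilon\), the extension's \(L^p(\Omega)\) bound becomes \((\varepsilon^p + \varepsilon^{p/2})/(1+\beta^{p/2})\), which degenerates to \(\varepsilon^p + \varepsilon^{p/2}\) if \(dA \equiv 0\); for \(\varepsilon \le 1\) the term \(\varepsilon^{p/2}\) is \emph{not} controlled by \(C(\varepsilon^p + \varepsilon^{(p+1)/2})\), since the ratio \(\varepsilon^{p/2}/\varepsilon^{(p+1)/2} = \varepsilon^{-1/2}\) blows up as \(\varepsilon \to 0\). The hypotheses of Propositions~\ref{proposition_Bdd_dA_trace_domain} and \ref{proposition_Bdd_dA_extension_domain} only require \(\norm{dA}_{L^\infty} \le \beta\), so taking \(\beta = \max(\norm{dA}_{L^\infty},1)/\varepsilon\) removes the degeneracy: then \(\beta^{p/2} \ge \varepsilon^{-p/2}\), so \((\varepsilon^p + \varepsilon^{p/2})/(1+\beta^{p/2}) \le 2\varepsilon^p\) for \(\varepsilon \le 1\), while the gradient estimate's extra factor \(\varepsilon^p\beta^{(p-1)/2}\) still only produces a constant (depending on \(A\)) times \(\varepsilon^{(p+1)/2}\). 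With that choice your case analysis closes uniformly in \(\varepsilon > 0\); when \(\norm{dA}_{L^\infty} > 0\) your original \(\beta\) also works after absorbing \(\norm{dA}_{L^\infty}\)-factors into \(C\), but the degenerate case is worth noting.
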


\section{Interpolation of magnetic spaces}

We define for every \(p  \in [1, +\infty)\) and \(\gamma \in (0, + \infty)\), the functional space \cite{Triebel_1978}*{Definition 1.8.1/1}
\begin{multline*}
    \mathfrak{W}^{1, p}_{A, \gamma} (\Rset^d)
  =
    \Bigl\{ 
    U : (0, + \infty) \to (W^{1, p}_A \bigl(\Rset^d) + L^p (\Rset^d)\bigr)      \st 
    \\ 
        U \text{ is weakly differentiable and }
        \norm{U}_{\mathfrak{W}^{1, p}_{A, \gamma} (\Rset^d)} < + \infty
        \Bigr\},        
\end{multline*}
where
\[
  \norm{U}_{\mathfrak{W}^{1, p}_{A, \gamma} (\Rset^d)}
  \defeq 
    \biggl(
      \int_0^{+\infty} 
            \bigl(\norm{U (t)}_{W^{1, p}_A ( (\Rset^d)}^p + \norm{U' (t)}_{L^p (\Rset^d)}^p\bigr)
            \,
            t^{\gamma}
        \dif t \biggr) .
\]

For every \(T \in (0, + \infty)\) and \(U \in \mathfrak{W}^{1, p}_{A, \gamma} (\Rset^d)\), one has \(U \in C ([0, T], W^{1, p}_A (\Rset^d) + L^p (\Rset^d))\) \cite{Triebel_1978}*{Lemma 1.8.1}.
In particular, the corresponding trace space can be defined by \cite{Triebel_1978}*{Definition 1.8.1/2}
\begin{equation}
\label{eq_def_Trace_V}
    \mathfrak{T}^{1, p}_{A, \gamma}
  \defeq
    \bigl\{ 
        U (0) 
      \st 
        U \in \mathfrak{W}^{1, p}_{A, \gamma} (\Rset^d) 
    \bigr\}.
\end{equation}
By a classical result in interpolation theory (see for example \cite{Triebel_1978}*{Theorem 1.8.2}), we have if \(p \in [1, +\infty)\) and \(s \in (0, 1)\),
\begin{equation}
\label{eq_interpolation_T}
    \mathfrak{T}^{1, p}_{A, (1 - s)p - 1} (\Rset^d)
  = 
    \bigl(W^{1, p}_A (\Rset^d), L^p (\Rset^d)\bigr)_{s, p},
\end{equation}
where the right-hand side denotes the real interpolation of order \(s\) and exponent \(p\) between the spaces \(W^{1, p}_A (\Rset^d)\) and \(L^p (\Rset^d)\) \cite{Triebel_1978}*{Definition 1.3.2}.

In order to characterize the trace space, we rely on the following equivalence, whose non-magnetic counterpart is classical \cite{Triebel_1978}*{Lemma 2.9.1/2}

\begin{lemma}
\label{lemma_V_W}
Let \(d \ge 1\), \(0 < s < 1\) and \(1 \le p < +\infty\).
If \(A \in C (\Rset^d, \Rset^d)\) and if \(dA \in L^\infty (\Rset^d, \bigwedge^2 \Rset^d)\), then 
\[
    \mathfrak{W}^{1, p}_{A, \gamma} (\Rset^d)
  =
    W^{1, p}_{\Bar{A}, \gamma} (\Rset^{d + 1}_+).
\]
\end{lemma}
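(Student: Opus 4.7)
The plan is to identify the two spaces via the natural correspondence $U(x, t) \leftrightarrow \tilde{U}(t)(x)$, exploiting the product structure $\bar{A}(x, t) = (A(x), 0)$, and to reduce the equivalence of norms to the classical non-magnetic case (Triebel's Lemma 2.9.1/2). The decisive structural observation is that since $\bar{A}$ is independent of $t$ and has vanishing $(d+1)$-th component, the weak covariant gradient splits distributionally as
\[
\nabla_{\bar{A}} U = \bigl(\nabla_A U(\cdot, t),\ \partial_{d+1} U\bigr),
\]
so that $\abs{\nabla_{\bar{A}} U}^p$ is comparable, up to constants depending only on $d$ and $p$, to $\abs{\nabla_A U(\cdot, t)}^p + \abs{\partial_{d+1} U}^p$. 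This matches the structure of the $\mathfrak{W}^{1, p}_{A, \gamma}$ norm.

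For the inclusion $W^{1, p}_{\bar{A}, \gamma}(\Rset^{d+1}_+) \subseteq \mathfrak{W}^{1, p}_{A, \gamma}(\Rset^d)$, I would start from $U \in W^{1, p}_{\bar{A}, \gamma}$ and apply Fubini to the finite weighted integral of $\abs{U}^p + \abs{\nabla_{\bar{A}} U}^p$. After choosing a good representative, almost every horizontal slice $\tilde{U}(t) \defeq U(\cdot, t)$ lies in $W^{1, p}_A(\Rset^d)$, its spatial covariant gradient agreeing with the first $d$ components of $\nabla_{\bar{A}} U$. To exhibit the Banach-space-valued weak derivative $\tilde{U}'$, I test against $\varphi \in C^\infty_c((0, +\infty))$ and use the distributional identity
\[
\int_0^{+\infty} \tilde{U}(t)\, \varphi'(t) \dif t = -\int_0^{+\infty} \partial_{d+1} U(\cdot, t)\, \varphi(t) \dif t
\]
interpreted in $W^{1, p}_A + L^p$; this both produces $\tilde{U}'$ and identifies it with the scalar $\partial_{d+1} U$. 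The desired weighted $L^p$ integrability of $\norm{\tilde{U}}_{W^{1, p}_A}$ and $\norm{\tilde{U}'}_{L^p}$ then follows from Fubini.

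For the reverse inclusion, I reconstruct $U(x, t) \defeq \tilde{U}(t)(x)$ from $\tilde{U} \in \mathfrak{W}^{1, p}_{A, \gamma}$ by choosing a jointly measurable representative; this is available because $\tilde{U}$ has a continuous representative into $W^{1, p}_A + L^p$ (the embedding implicit in the definition of $\mathfrak{W}^{1, p}_{A, \gamma}$ recalled from Triebel's Lemma 1.8.1) and because $W^{1, p}_A + L^p$ is separable under our hypotheses. Distributional derivatives of $U$ are then identified, by testing on product test functions $\varphi(x) \psi(t)$, with $\nabla_A \tilde{U}(t)$ and $\tilde{U}'(t)$ respectively, and Fubini yields the norm equivalence.

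The main obstacle is precisely the identification between the Banach-space-valued weak derivative of $\tilde{U}$ and the scalar distributional partial derivative $\partial_{d+1} U$, together with the selection of a jointly measurable representative. These issues are standard in the non-magnetic setting; the magnetic structure introduces no essential additional difficulty because $\bar{A}$ is $t$-independent, has no vertical component, and is locally bounded by continuity, so that the vector-valued arguments of Triebel apply verbatim after replacing $\nabla$ by $\nabla_A$ in the horizontal directions. The hypothesis $dA \in L^\infty$ plays no role here beyond the ambient assumption of the section.
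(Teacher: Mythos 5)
Your proposal is correct and follows essentially the same route as the paper's proof: Fubini's theorem to show that slices $U(\cdot, t)$ lie in $W^{1,p}_A(\Rset^d)$, testing against tensor-product test functions $\varphi(x)\theta(t)$ to identify the Banach-space-valued weak derivative $\tilde U'$ with $\partial_{d+1} U$ (and, conversely, density of tensor products to identify $\nabla_{\bar A} U$ with the pair $(\nabla_A U(t), U'(t))$), with the norm equivalence resting on the pointwise splitting $\nabla_{\bar A} U = (\nabla_A U(\cdot,t), \partial_{d+1} U)$ coming from the fact that $\bar A$ is $t$-independent with vanishing vertical component. Your remark that the hypothesis $dA \in L^\infty$ is not used in this particular lemma, and your attention to the joint-measurability/separability point, are both correct and make explicit steps the paper passes over silently.
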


Here \(\Bar{A} : \Rset^{d + 1}_+ \to \Rset^{d + 1}\) is the natural extension of \(A\), defined by \(\Bar{A} (x, t) =(A (x), 0)\).
The equality of \cref{lemma_V_W} is understood under the identification \(U (t) (x) = U (x, t)\).

\begin{proof}[Proof of \cref{lemma_V_W}]
\resetconstant
We assume first that \(U \in W^{1, p}_{\Bar{A}, s} (\Rset^{d + 1}_+)\). By Fubini's theorem, we have for almost every \(t\),  \(U (t) \in W^{1, p} (\Rset^d)\).
If now \(\theta \in C^\infty_c ((0, +\infty))\), we have for every \(\varphi \in C^\infty_c (\Rset^d)\),
\begin{equation}
\label{eq_Xiu9aghaej}
  \int_{\Rset^d} \int_0^{+\infty} \theta' \varphi \,U
  =
  -\int_{\Rset^d} \int_0^{+\infty} \theta \, \varphi \,U'
\end{equation}
and thus, in \(W^{1, p}_A (\Rset^d) + L^p (\Rset^d)\),
\[
   \int_0^{+\infty} \theta' \varphi \,U
  =
  -\int_0^{+\infty} \theta \,\varphi\, U' .
\]
We finally have
\begin{equation*}
      \int_0^{+\infty} 
            \bigl(\norm{U (t)}_{W^{1, p}_A (\Rset^d)}^p + \norm{U' (t)}_{L^p (\Rset^d)}^p\bigr) \, t^\gamma
        \dif t
    \le
      \C
      \iint\limits_{\Rset^{d + 1}_+}
        \bigl(\abs{\nabla_{\Bar{A}} U (x, t)}^p + \abs{U (x, t)}^p\bigr) \, t^\gamma \dif t \dif x.
\end{equation*}

Conversely, if \(U \in \mathfrak{W}^{1, p}_{A, (1 - s)p - 1}\), then \eqref{eq_Xiu9aghaej} holds and similarly,
\[
  \int_0^{+\infty} \theta \,(\operatorname{div} \varphi - iA \cdot \varphi) \, U (t)
  =
  -\int_0^{+\infty} \theta \, \varphi \, \nabla_A U (t) 
\]
Hence, by the density of tensor products, we obtain that \(U \in W^{1, 1}_\mathrm{loc} (\Rset^{d + 1}_+)\) and \(\nabla_A U (t, x) = (\nabla_A (U (t)) (x), U' (t))\).
\end{proof}

We obtain from the previous results the following characterization of the spaces by interpolation.

\begin{theorem}
  \label{theorem_interpolation}
  Let \(d \ge 1\), \(0 < s < 1\) and \(1 \le p < +\infty\).
  If \(A \in C (\Rset^d, \Rset^d)\) and if \(dA \in L^\infty (\Rset^d, \bigwedge^2 \Rset^d)\), then 
\[
    W^{s, p}_{A}
  = 
    \bigl(W^{1, p}_A (\Rset^d), L^p (\Rset^d)\bigr)_{s, p},
  \]
\end{theorem}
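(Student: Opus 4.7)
The plan is to chain together the three ingredients set up in the preceding paragraphs: the abstract interpolation identity \eqref{eq_interpolation_T}, the identification of the weighted abstract space with a magnetic Sobolev space on the half-space (\cref{lemma_V_W}), and the trace characterisation of the latter (\cref{theorem_trace_halfplane_weight}).

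First I would take for \(\Bar{A} : \Rset^{d + 1}_+ \to \Rset^{d+1}\) the trivial extension \(\Bar{A}(x, t) = (A(x), 0)\), as in \cref{lemma_V_W}. Two elementary observations have to be recorded: the parallel component of \(\Bar{A}\) on the boundary \(\{t = 0\}\) is exactly \(A\) itself, so \(W^{s, p}_{\Bar{A}^\shortparallel}(\Rset^d) = W^{s, p}_{A}(\Rset^d)\); and the exterior derivative \(d \Bar{A}\) on \(\Rset^{d + 1}_+\) is obtained from \(dA\) by setting the \(t\)-components to zero, so \(\norm{d\Bar{A}}_{L^\infty(\Rset^{d+1}_+)} = \norm{dA}_{L^\infty(\Rset^d)} < +\infty\) under the hypothesis.

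Set \(\gamma \defeq (1-s)p - 1\), so that the weight \(t^\gamma\) is precisely the one appearing both in the definition of \(\mathfrak{W}^{1, p}_{A, \gamma}\) and in \cref{theorem_trace_halfplane_weight} (which with the convention \eqref{def-WA-gamma} amounts to weighting the measure by \(t^\gamma\), equivalently by \(1/t^{1-(1-s)p}\)). By \cref{lemma_V_W} applied with this value of \(\gamma\), the space \(\mathfrak{W}^{1, p}_{A, \gamma}(\Rset^d)\) is isometrically identified with \(W^{1, p}_{\Bar{A}, \gamma}(\Rset^{d+1}_+)\) under the correspondence \(U(t)(x) \mapsto U(x, t)\). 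Through this identification, the abstract trace \(U \mapsto U(0)\) defining \(\mathfrak{T}^{1, p}_{A, \gamma}\) in \eqref{eq_def_Trace_V} coincides with the boundary trace \(U \mapsto U(\cdot, 0)\) of \cref{theorem_trace_halfplane_weight}.

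Now I would combine the two descriptions of this trace space. On the one hand, \eqref{eq_interpolation_T} gives
\[
  \mathfrak{T}^{1, p}_{A, (1-s)p - 1}(\Rset^d)
  = \bigl(W^{1, p}_{A}(\Rset^d), L^p(\Rset^d)\bigr)_{s, p},
\]
with equivalence of norms. On the other hand, \cref{theorem_trace_halfplane_weight} applied to \(\Bar{A}\) (using \(\norm{d\Bar{A}}_{L^\infty} < +\infty\)) asserts that the boundary trace is a continuous surjection from \(W^{1, p}_{\Bar{A}, \gamma}(\Rset^{d+1}_+)\) onto \(W^{s, p}_{\Bar{A}^\shortparallel}(\Rset^d) = W^{s, p}_{A}(\Rset^d)\) admitting a continuous right inverse \(\operatorname{Ext}\); equivalently, the image of the trace is \(W^{s, p}_{A}(\Rset^d)\) with equivalent norms. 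Through the identification of \cref{lemma_V_W} this image is \(\mathfrak{T}^{1, p}_{A, (1-s)p-1}(\Rset^d)\), so
\[
  \mathfrak{T}^{1, p}_{A, (1-s)p-1}(\Rset^d) = W^{s, p}_{A}(\Rset^d).
\]
Comparing the two identities yields the theorem.

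The only delicate point, and the one I would double–check, is the bookkeeping on the weight exponent: one needs to verify that the value \(\gamma = (1-s)p - 1\) that makes the Triebel–Lions–Peetre trace theorem produce \((W^{1, p}_A, L^p)_{s, p}\) is exactly the value for which the results of Sections~2–3, repackaged in \cref{theorem_trace_halfplane_weight}, produce \(W^{s, p}_{A^\shortparallel}\). Everything else is a bijective relabeling.
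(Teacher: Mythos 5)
Your proposal is correct and follows the same route as the paper's own (very terse) proof: chain Theorem~\ref{theorem_trace_halfplane_weight}, Lemma~\ref{lemma_V_W}, and the Lions--Peetre--Triebel identity \eqref{eq_interpolation_T}. You are a bit more careful than the paper on two points worth keeping: you spell out that one must apply Theorem~\ref{theorem_trace_halfplane_weight} to the trivial extension \(\Bar{A}(x,t)=(A(x),0)\) (noting \(\Bar{A}^\shortparallel=A\) and \(\norm{d\Bar{A}}_{L^\infty}=\norm{dA}_{L^\infty}\)), and you explicitly reconcile the weight exponent \(\gamma=(1-s)p-1\), which is indeed the correct value (the subscript \(1-(1-s)p\) appearing in the statement of Theorem~\ref{theorem_trace_halfplane_weight} is a sign slip relative to the weight \(t^{(1-s)p-1}\) actually used in its integrals and in the definition \eqref{def-WA-gamma}).
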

\begin{proof}
  By \cref{theorem_trace_halfplane_weight}, \(W^{s, p}_A (\Rset^d)\) is the trace space of \(W^{1, p}_{A, (1 - s)p - 1} (\Rset^d)\). 
  By \cref{lemma_V_W}, this latter space coincides with \(\mathfrak{W}^{1, p}_{A, (1 - s)p - 1 }(\Rset^d)\)  whose trace space \(\mathfrak{T}^{s, p}_A (\Rset^d)\) defined in \eqref{eq_def_Trace_V} is the required interpolation space by \eqref{eq_interpolation_T}.
\end{proof}

\section{Extension from a half-space}
Finally, we obtain a result about the extension from half-space to the whole space of functions in magnetic Sobolev spaces.
Set,  for \(\gamma \in \Rset\),
\[
W^{1, p}_{A, \gamma} (\Rset^{d + 1})
\defeq
\Bigl\{
U \in W^{1, 1}_{\mathrm{loc}} (\Rset^{d + 1}) 
\st 
\norm{U}_{W^{1, p}_{A, \gamma} (\Rset^{d + 1})} < + \infty\Bigr\},
\]
where
\[
\norm{U}_{W^{1, p}_{A, \gamma} (\Rset^{d + 1})}
\defeq
\biggl(\iint\limits_{\Rset^{d + 1}_+} 
\bigl(
\abs{\nabla_A U (x, t)}^p
+ \abs{U (x, t)}^p
\bigr)
\, 
\abs{t}^\gamma \dif x \dif t \biggr)^\frac{1}{p}.
\]

\begin{theorem}
  \label{theorem_extension_hetero}
  Let \(d \ge 1\), \(-1 < \gamma < p - 1\) and \(1 \le p <+\infty\).
  There exists a constant \(C > 0\) such that for every \(A \in C^1 (\Rset^{d + 1}, \bigwedge^1 \Rset^{d + 1})\) such that \(dA\) is bounded and every \(U \in W^{1, p}_{A, \gamma} (\Rset^{d + 1}_+ )\), there exists \(\Bar{U} \in W^{1, p}_{A, \gamma} (\Rset^{d + 1})\) such that 
  \(\Bar{U} = U\) on \(\Rset^{d + 1}_+\). 
  Moreover, if \(\beta \ge \norm{dA}_{L^\infty(\Rset^{d + 1})}\), 
  \begin{equation*}
    \iint\limits_{\Rset^{d + 1}} 
    \bigl(\abs{\nabla_A \Bar{U} (x, t)}^p 
    + 
    \beta^\frac{p}{2} \abs{\Bar{U} (x, t)}^p\bigr)
    t^{\gamma}
    \dif x
    \dif t
    \le 
    C
    \iint\limits_{\Rset^{d + 1}} 
   \bigl(
    \abs{\nabla_A {U} (x, t)}^p 
    + 
    \beta^\frac{p}{2} \abs{{U} (x, t)}^p\bigr)
    t^{\gamma}
    \,
    \dif x
    \dif t.
  \end{equation*}
\end{theorem}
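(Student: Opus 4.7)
The plan is to construct $\bar U$ as a cutoff reflection of $U$ across $\{t = 0\}$, after first exploiting \emph{gauge invariance} to simplify $A$. The naive even reflection $\bar U(x, t) \defeq U(x, -t)$ for $t < 0$ produces a covariant-gradient error involving $A^\shortparallel(x, t) - A^\shortparallel(x, -t)$ which is in general not controlled by $\norm{dA}_{L^\infty}$ alone; however, once one kills the normal component $A_{d + 1}$ by a gauge transformation, one has $\partial_t A_j = (dA)_{j, d + 1}$ for $j = 1, \dotsc, d$, and this error becomes bounded by $2 \abs{t} \, \beta$.

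More precisely, I would first set $\Phi(x, t) \defeq -\int_0^t A_{d + 1}(x, s) \dif s$, note that $\Phi \in C^1(\Rset^{d + 1}, \Rset)$, and replace the pair $(A, U)$ by $(A + d\Phi,\, e^{-i\Phi} U)$. Since $\abs{\nabla_A U}$, $\abs{U}$ and $\norm{dA}_{L^\infty}$ are all gauge invariant, the inequality to be proved is unchanged, and one may assume $A_{d + 1} \equiv 0$ on $\Rset^{d + 1}$. Next, pick $\chi \in C^\infty(\Rset)$ with $0 \le \chi \le 1$, $\chi = 1$ on $[-a/2, a/2]$, $\chi = 0$ outside $(-a, a)$, and $\abs{\chi'} \le C/a$, where $a \defeq \beta^{-1/2}$ (with the convention $a = +\infty$ when $\beta = 0$), and define
\begin{equation*}
\bar U(x, t) \defeq
\begin{cases}
U(x, t) & \text{if } t \ge 0, \\
\chi(t) \, U(x, -t) & \text{if } t < 0.
\end{cases}
\end{equation*}
Since $\chi(0) = 1$, the function $\bar U$ is continuous across $\{t = 0\}$, so it lies in $W^{1, 1}_{\mathrm{loc}}(\Rset^{d + 1})$ with weak derivatives equal to the classical ones on each half.

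Using $A_{d + 1} \equiv 0$, a direct computation yields, for $t < 0$ and $s \defeq -t > 0$, the pointwise bound
\begin{equation*}
\bigabs{\nabla_A \bar U(x, t)} \le \chi(t) \, \bigabs{\nabla_A U(x, s)} + \chi(t) \, \bigabs{A^\shortparallel(x, t) - A^\shortparallel(x, s)} \, \abs{U(x, s)} + \abs{\chi'(t)} \, \abs{U(x, s)}.
\end{equation*}
Integrating $\partial_t A_j = (d A)_{j, d + 1}$ over $[-s, s]$ gives $\bigabs{A^\shortparallel(x, t) - A^\shortparallel(x, s)} \le 2 s \beta$; moreover the support of $\chi$ confines $s$ to $[0, \beta^{-1/2}]$, so $(s \beta)^p \le \beta^{p/2}$ there, and $\abs{\chi'} \le C \beta^{1/2}$. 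Integrating the three terms against $\abs{t}^\gamma \dif x \dif t$ over the lower half-space and combining with the trivial bound $\iint \abs{t}^\gamma \abs{\bar U}^p \le \iint s^\gamma \abs{U}^p$ produces the desired inequality.

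The main obstacle is the gauge-invariance reduction: the whole scheme hinges on the observation that control of $A(x, t) - A(x, -t)$ purely by $\norm{dA}_{L^\infty}$ is available only after killing the normal component of $A$, and one has to check carefully that each term in the stated inequality is gauge-invariant so that the reduction to $A_{d + 1} \equiv 0$ is lossless. A secondary, more standard technicality is first proving the estimate for $U$ smooth and compactly supported and then extending to arbitrary $U \in W^{1, p}_{A, \gamma}(\Rset^{d + 1}_+)$ by a density argument analogous to \cref{lem-B-1}, for which the hypothesis $-1 < \gamma < p - 1$ is essential since it ensures that $\abs{t}^\gamma$ is a Muckenhoupt $A_p$-weight.
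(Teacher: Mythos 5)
Your proposal is correct, and it takes a genuinely different route from the paper. The paper's proof of \cref{theorem_extension_hetero} is an abstract composition: apply the trace estimate \cref{theorem_trace_halfplane_weight} on \(\Rset^{d+1}_+\) to produce \(u = \operatorname{Tr} U \in W^{s,p}_{A^\shortparallel}(\Rset^d)\) with \(s = 1 - (\gamma+1)/p\), then apply the extension operator \cref{proposition_Bdd_dA_extension} from \(\Rset^d\) into \(\Rset^{d+1}_-\), and glue along the boundary. Your argument is instead a direct, hands-on reflection after a preliminary gauge reduction.

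The more interesting point is that your construction sharpens the discussion in \cref{rem-extension}. There the authors observe that the plain reflection \(\bar U(x,t) = U(x,-t)\) produces a covariant-gradient error \(i\bigl(A(x,t) - RA(x,-t)\bigr)U(x,-t)\) and argue that since killing this error exactly forces \(A\) (hence \(dA\)) to be \(R\)-invariant, the reflection approach fails ``even up to gauge transformations.'' You avoid this obstruction by not demanding exact cancellation: once the normal component \(A_{d+1}\) is gauged to zero (so the \((d+1)\)-component of the error vanishes and \(\partial_t A^\shortparallel = -(dA)_{\cdot, d+1}\) is controlled pointwise by \(\beta\)), the remaining parallel error is bounded by \(2\abs{t}\beta\), and the cutoff at the magnetic length scale \(a = \beta^{-1/2}\) — exactly the same scale used in \cref{proposition_Bdd_dA_extension} — makes the total contribution of order \(\beta^{p/2}\norm{U}^p_{L^p_\gamma}\), as needed. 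This yields the estimate directly without passing through the trace space at all.

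Two minor technical remarks you should be aware of when writing this up. First, after the gauge transformation \(\Phi(x,t) = -\int_0^t A_{d+1}(x,s)\dif s\), the modified potential \(\tilde A_j = A_j + \partial_j \Phi\) for \(j \le d\) is continuous and is \(C^1\) in \(t\), but need not be \(C^1\) in the tangential variables \(x\) unless \(A\) is \(C^2\); this is harmless since the argument only uses \(\partial_t \tilde A_j = -(dA)_{j,d+1} \in L^\infty\) and continuity of \(\tilde A\), not full \(C^1\) regularity, but it is worth stating explicitly that one leaves the regularity class in which the spaces were originally defined and verifying that the weak covariant gradient and the gauge identity \(\abs{\nabla_{\tilde A} \tilde U} = \abs{\nabla_A U}\) make sense at this lower regularity. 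Second, the density reduction you invoke is essential, and your identification of \(-1 < \gamma < p-1\) as the Muckenhoupt \(A_p\) range for the weight \(\abs{t}^\gamma\) is the right condition; note that as stated, \cref{lem-B-1} in the paper gives the interval \(1-p<\gamma<1\), which agrees with yours only when \(p = 2\), so either a small adaptation of that lemma or a direct verification of the \(A_p\) condition should accompany your proof.
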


\begin{proof}[Proof of \cref{theorem_extension_hetero}]
  This follows from \cref{theorem_trace_halfplane_weight} on \(\Rset^{d + 1}_+\) and \cref{proposition_Bdd_dA_extension} on \(\Rset^{d + 1}_-\) with \(s = 1- \frac{\gamma + 1}{p}\).
\end{proof}

\begin{remark}\label{rem-extension} \rm
A natural strategy to prove \cref{theorem_extension_hetero} would be to define the extension \(\Bar{U}\) by reflection: for every \((x, t) \in \Rset^d \times (-\infty, 0)\), we would define \(\Bar{U} (x, t) = U (x, -t)\). The computation of the covariant derivative would give
\[
\nabla_A \Bar{U} (x, t) = R \nabla_A U (x, -t)
+ i \bigl(A (x, t) - R A (x - t)\bigr) U (x, -t),
\]
where \(R\) is the orthogonal reflection with respect to 
the hyperplane \(\Rset^d \times \{0\}\).
The approach would thus only work when \(A\) is invariant under the pull-back by \(R\); this would imply the same invariance of the magnetic field \(dA\) and implies that even up to gauge transformations, it is not possible to cover a significant class of magnetic fields. 
\end{remark}

\appendix

\section{Alternative magnetic Gagliardo seminorms}

A fractional Gagliardo seminorm defined by an integral involving the mid-point has been proposed in the litterature
(see \citelist{\cite{dAvenia_Squassina_2018}*{\S 2}\cite{Nguyen_Pinamonti_Squassina_2018}\cite{Squassina_Volzone_2016}\cite{Pinamonti_Squassina_Vecchi_2017}\cite{Wang_Xiang_2016}*{\S 2}\cite{Liang_Repovs_Zhang_2018}*{\S 2}\cite{Binlin_Squassina_Xia_2018}*{\S 2}\cite{Ambrosio_2018}\cite{Ambrosio_dAvenia_2018}\cite{Fiscella_Pinamonti_Vecchi_2017}\cite{Ichinose_2013}}):
\begin{equation}
  \label{eq_def_mag_Gagliardo_mid}
  \biggl(
  \iint\limits_{\Rset^d \times \Rset^d}
  \frac
  {\bigabs{e^{i\,A (\tfrac{x + y}{2})[y - x]} u (y) - u (x)}^p}
  {\abs{y - x}^{d + sp}}
  \dif x
  \dif y
  \biggr)^\frac{1}{p};
\end{equation}
another natural candidate could be the integral involving boundary points
\begin{equation}
  \label{eq_def_mag_Gagliardo_boundary}
  \biggl(
  \iint\limits_{\Rset^d \times \Rset^d}
  \frac
  {\bigabs{e^{\frac{i}{2}\,(A(x)[y - x] + A (y)[y - x])} u (y) - u (x)}^p}
  {\abs{y - x}^{d + sp}}
  \dif x
  \dif y
  \biggr)^\frac{1}{p}.
\end{equation}
Whereas \eqref{eq_def_mag_Gagliardo} enjoyed a gauge-invariance property, this is not the case for \eqref{eq_def_mag_Gagliardo_mid} or \eqref{eq_def_mag_Gagliardo_boundary}.

The formulas \eqref{eq_def_mag_Gagliardo}, \eqref{eq_def_mag_Gagliardo_mid} and \eqref{eq_def_mag_Gagliardo_boundary} are in fact particular cases of the following general semi-norm
\begin{equation}
  \label{eq_def_mag_Gagliardo_mu}
  \norm{u}_{\dot{W}^{s, p}_{A, \mu} (\Rset^d)}
  =
  \biggl(
  \iint\limits_{\Rset^d \times \Rset^d}
  \frac
  {\bigabs{e^{i\,\mathcal{I}^\mu_{A^\shortparallel} (x, y)} u (y) - u (x)}^p}
  {\abs{y - x}^{d + sp}}
  \dif x
  \dif y
  \biggr)^\frac{1}{p},
\end{equation}
where \(\mu\) is a given finite measure on the interval \([0, 1]\) and where the potential \(\intpot[\mu]{A^\shortparallel}\) of \(A\) with respect to the measure \(\mu\) is defined by the following variant of \eqref{eq_def_intpot} 
\begin{equation*}
  \intpot[\mu]{A^\shortparallel} (x, y)
  = 
  \int_0^1 
  A \bigl((1 - t) x + t y\bigr)
  \cdot (y - x)
  \dif \mu (t).
\end{equation*}
The seminorm defined in \eqref{eq_def_mag_Gagliardo}, \eqref{eq_def_mag_Gagliardo_mid} and \eqref{eq_def_mag_Gagliardo_boundary} correspond respectively to a restriction of the Lebesgue measure \(\mu = \mathcal{L}^1 \llcorner [0, 1]\), a Dirac measure at the centre \(\mu = \delta_{1/2}\) and the average of Dirac measures at the endpoint \(\mu = \frac{\delta_0 + \delta_1}{2}\).

\begin{proposition}
  \label{proposition_Equivalent_norms}
  Let \(k \in \Nset\) and let \(\mu_1, \mu_2\) be measures on \([0, 1]\).
  If for every \(j \in \{0,\dotsc, k - 1\}\),
  \[
  \int_0^1 t^j \dif \mu_1 (t) = 
  \int_0^1 t^j \dif \mu_2 (t),
  \]
  then 
  \[
  \bigabs{
    \norm{u}_{W^{s, p}_{A, \mu_2}(\Rset^d)}
    -  \norm{u}_{W^{s, p}_{A, \mu_1}(\Rset^d)}}
  \le 
  C \norm{D^k A}_{L^\infty (\Rset^d)}^\frac{s}{k + 1}
  \norm{u}_{L^p (\Rset^d)}.
  \]
\end{proposition}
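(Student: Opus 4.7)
The plan is to view \(\norm{u}_{W^{s, p}_{A, \mu_i}(\Rset^d)}\) as an \(L^p\) norm on \(\Rset^d \times \Rset^d\) with respect to the measure \(\dif x \dif y/\abs{y - x}^{d + sp}\) applied to the integrand \(e^{i\,\intpot[\mu_i]{A^\shortparallel} (x, y)} u (y) - u (x)\), and then apply the reverse triangle inequality in this \(L^p\) space. This reduces the proof to controlling
\begin{equation*}
  \iint\limits_{\Rset^d \times \Rset^d}
  \frac{\bigabs{e^{i\,\intpot[\mu_1]{A^\shortparallel} (x, y)} - e^{i\,\intpot[\mu_2]{A^\shortparallel} (x, y)}}^p\,\abs{u (y)}^p}{\abs{y - x}^{d + sp}} \dif x \dif y,
\end{equation*}
after which a change of variables \(h = y - x\) factors out \(\norm{u}_{L^p (\Rset^d)}^p\).

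The first key step is to estimate \(\intpot[\mu_1]{A^\shortparallel} (x, y) - \intpot[\mu_2]{A^\shortparallel} (x, y)\) using the moment hypothesis. Performing a Taylor expansion of \(t \mapsto A^\shortparallel ((1 - t) x + t y) \cdot (y - x)\) at \(t = 0\) to order \(k - 1\), the coefficients of \(t^j\) for \(j = 0, \dotsc, k - 1\) are expressions in \(D^j A^\shortparallel (x)\) and \((y - x)\), and the remainder is uniformly bounded by \(C \norm{D^k A^\shortparallel}_{L^\infty} \abs{y - x}^{k + 1}\). Since \(\mu_1\) and \(\mu_2\) share the same first \(k\) moments, integrating against \(\dif (\mu_1 - \mu_2)(t)\) kills all the polynomial contributions and leaves only the remainder. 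Combining this with the trivial bound \(\abs{e^{ia} - e^{ib}} \le \min\{\abs{a - b}, 2\}\) yields
\begin{equation*}
  \bigabs{e^{i\,\intpot[\mu_1]{A^\shortparallel} (x, y)} - e^{i\,\intpot[\mu_2]{A^\shortparallel} (x, y)}}
  \le
  \min \bigl\{ C \norm{D^k A^\shortparallel}_{L^\infty} \abs{y - x}^{k + 1}, \, 2 \bigr\}.
\end{equation*}

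The second step is the radial integration in \(h = y - x\). Splitting at the critical radius \(r_0 \defeq \norm{D^k A^\shortparallel}_{L^\infty}^{-1/(k+1)}\) where the two bounds balance, one computes
\begin{equation*}
\int_{\Rset^d} \frac{\min \bigl\{ \norm{D^k A^\shortparallel}_{L^\infty}^{p} \abs{h}^{(k + 1)p}, 1 \bigr\}}{\abs{h}^{d + sp}} \dif h
\le C \norm{D^k A^\shortparallel}_{L^\infty}^{sp/(k + 1)},
\end{equation*}
where the inner region contributes because \((k + 1)p - sp > 0\) (as \(s < 1 \le k + 1\)) and the outer region converges because \(sp > 0\). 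Multiplying by \(\int_{\Rset^d} \abs{u (y)}^p \dif y\), taking the \(p\)-th root and applying the reverse triangle inequality yields the claimed bound.

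The only mild subtlety is the case \(s = 0\) or \(s = 1\), which is excluded by the convention \(0 < s < 1\) used throughout the paper; the argument above relies on both \(sp > 0\) and \((k + 1 - s) p > 0\) for absolute convergence of the split integral, both of which are automatic. No additional estimate is needed for \(u\), so the proof will only touch the magnetic phase and not involve any trace, extension, or Sobolev machinery.
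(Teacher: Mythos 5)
Your proposal is correct and follows essentially the same route as the paper: reverse triangle inequality (equivalently, Minkowski) reduces the difference of seminorms to the integral of $\bigl|e^{i\,\intpot[\mu_2 - \mu_1]{A^\shortparallel}(x,y)} - 1\bigr|^p\abs{u(y)}^p/\abs{y-x}^{d+sp}$, the shared moments annihilate the Taylor polynomial of $t \mapsto A^\shortparallel((1-t)x+ty)\cdot(y-x)$ leaving a remainder $O(\norm{D^kA}\abs{y-x}^{k+1})$, and the split radial integral of $\min\{\norm{D^kA}^p\abs{h}^{(k+1)p},1\}/\abs{h}^{d+sp}$ gives the factor $\norm{D^kA}^{sp/(k+1)}$. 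The only presentational difference is that the paper applies the pointwise reverse triangle inequality first and then Minkowski, while you invoke the $L^p$ reverse triangle inequality in one step; these are the same computation.
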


The measures corresponding to the semi-norms of \eqref{eq_def_mag_Gagliardo}, \eqref{eq_def_mag_Gagliardo_mid} and \eqref{eq_def_mag_Gagliardo_boundary} all satisfy the assumptions of \cref{proposition_Equivalent_norms} with \(k = 0\), \(k = 1\) and \(k = 2\), and the heterogeneous spaces coincide thus as soon as either \(A\) is bounded or its second or first derivative is bounded. 
This means in particular that if \(k = 1\), the estimates in the present work involving \(\norm{dA}_{L^\infty}\) and our semi-norm given by \eqref{eq_def_mag_Gagliardo}, have counterparts involving \(\norm{D A}_{L^\infty}\) and either \eqref{eq_def_mag_Gagliardo_mid} and \eqref{eq_def_mag_Gagliardo_boundary}; the latter quantities are not gauge invariant.

In the particular case where \(A\) is an affine function, then \cref{proposition_Equivalent_norms} holds with \(k = 2\) and \(D^2 A = 0\); the norms defined by \eqref{eq_def_mag_Gagliardo}, \eqref{eq_def_mag_Gagliardo_mid} and \eqref{eq_def_mag_Gagliardo_boundary} are then identical.

Higher moment identities in the assumption of \cref{proposition_Equivalent_norms} induce lower powers in the dependence on derivatives of \(A\), which can be relevant in semi-classical analyses.
The exponent in the moment condition can be increased by using a measure \(\mu\) such that more moment coincide. For instance setting \(\mu = \frac{1}{6} \delta_0 + \frac{2}{3} \delta_{1/2} + \frac{1}{6} \delta_1\), corresponding to Simpson's quadrature rule, would give estimates of \eqref{eq_def_mag_Gagliardo} up to a term of the order \(\norm{D^k A}^{\frac{sp}{k + 1}}\) for \(k \in \{0, \dotsc, 3\}\).

\begin{proof}[Proof of \cref{proposition_Equivalent_norms}]
  \resetconstant
  We have for every \(x, y \in \Rset^d\), by the triangle inequality
  \begin{multline*}
    \Bigabs{
      \bigabs{e^{i\,\intpot[\mu_2]{A^\shortparallel} (x, y)} u (y) - u (x)}
      - \bigabs{e^{i\,\intpot[\mu_1]{A^\shortparallel} (x, y)} u (y) - u (x)}
    }\\
    \le
    \bigabs{e^{i\,(\intpot[\mu_2]{A^\shortparallel} (x, y) - \intpot[\mu_2]{A^\shortparallel} (x, y)} u (y)}
    =\bigabs{e^{i\,\mathcal{I}^{\mu_2 - \mu_1}_{A^\shortparallel} (x, y)} u (y)}.
  \end{multline*}
  By integration and Minkowski's inequality, this yields
  \begin{multline*} 
    \Biggl\lvert
    \biggl(
    \iint\limits_{\Rset^d \times \Rset^d}
    \frac
    {\bigabs{e^{i\,\intpot[\mu_2]{A^\shortparallel} (x, y)} u (y) - u (x)}^p}
    {\abs{y - x}^{d + sp}}
    \dif x
    \dif y
    \biggr)^\frac{1}{p}\\
    -
    \biggl(
    \iint\limits_{\Rset^d \times \Rset^d}
    \frac
    {\bigabs{e^{i\,\intpot[\mu_1]{A^\shortparallel} (x, y)} u (y) - u (x)}^p}
    {\abs{y - x}^{d + sp}}
    \dif x
    \dif y
    \biggr)^\frac{1}{p}\Biggr\rvert \\ 
    \le \biggl(\iint\limits_{\Rset^d \times \Rset^d} 
    \frac{\bigabs{e^{i\,\mathcal{I}^{\mu_2 - \mu_1}_{A^\shortparallel} (x, y)} - 1}^p \abs{u (y)}^p}{\abs{y - x}^{d + sp}} \dif x \dif y \biggr)^\frac{1}{p}.
  \end{multline*}
  
  By our assumption on the moments of the measures \(\mu_1\) and \(\mu_2\), there exists a constant \(\Cl{cst_yadee7TeeV}\) depending only on the measure \(\mu_2 - \mu_1\) such that 
  \[
  \bigabs{\intpot[\mu_2 - \mu_1]{A^\shortparallel} (x, y)}
  \le \Cr{cst_yadee7TeeV}
  \norm{D^k A} \abs{y - x}^{k + 1},
  \]
  and thus for every \(y \in \Rset^d\)
  \[
  \int_{\Rset^d}
  \frac{\bigabs{e^{i\,\intpot[\mu_2 - \mu_1]{A^\shortparallel} (x, y)} - 1}^p }{\abs{y - x}^{d + sp}} \dif x
  \le 
  \C \norm{D^k A}_{L^\infty}^\frac{sp}{k + 1};
  \]
  the conclusion then follows.
\end{proof}

\begin{bibdiv}
\begin{biblist}        

\bib{Ambrosio_2018}{article}{
  author={Ambrosio, Vincenzo},
  title={Boundedness and decay of solutions for some fractional magnetic Schrödinger equations in \(\Rset^N\)},
  journal={Milan J. Math. Vol.},
  volume={85},
  year={2018},
  pages={127–150},
  number={1},
  doi={10.1007/s00032-018-0283-3},
}

\bib{Ambrosio_dAvenia_2018}{article}{
   author={Ambrosio, Vincenzo},
   author={d'Avenia, Pietro},
   title={Nonlinear fractional magnetic Schr\"{o}dinger equation: existence and
   multiplicity},
   journal={J. Differential Equations},
   volume={264},
   date={2018},
   number={5},
   pages={3336--3368},
   issn={0022-0396},
   doi={10.1016/j.jde.2017.11.021},
}

\bib{Arioli_Szulkin_2003}{article}{
   author={Arioli, Gianni},
   author={Szulkin, Andrzej},
   title={A semilinear Schr\"{o}dinger equation in the presence of a magnetic
   field},
   journal={Arch. Ration. Mech. Anal.},
   volume={170},
   date={2003},
   number={4},
   pages={277--295},
   issn={0003-9527},
   doi={10.1007/s00205-003-0274-5},
}
                
\bib{Avron_Herbst_Simon_1978}{article}{
   author={Avron, J.},
   author={Herbst, I.},
   author={Simon, B.},
   title={Schr\"{o}dinger operators with magnetic fields},
   part={I},
   subtitle={General interactions},
   journal={Duke Math. J.},
   volume={45},
   date={1978},
   number={4},
   pages={847--883},
   issn={0012-7094},
}
                
\bib{Binlin_Squassina_Xia_2018}{article}{
   author={Binlin, Zhang},
   author={Squassina, Marco},
   author={Xia, Zhang},
   title={Fractional NLS equations with magnetic field, critical frequency
   and critical growth},
   journal={Manuscripta Math.},
   volume={155},
   date={2018},
   number={1-2},
   pages={115--140},
   issn={0025-2611},
   doi={10.1007/s00229-017-0937-4},
}

\bib{Cingolani_Secchi_2002}{article}{
   author={Cingolani, Silvia},
   author={Secchi, Simone},
   title={Semiclassical limit for nonlinear Schr\"{o}dinger equations with
   electromagnetic fields},
   journal={J. Math. Anal. Appl.},
   volume={275},
   date={2002},
   number={1},
   pages={108--130},
   issn={0022-247X},
   doi={10.1016/S0022-247X(02)00278-0},
}

\bib{Coifman_Fefferman_1974}{article}{
  author={Coifman, R. R.},
  author={Fefferman, C.},
  title={Weighted norm inequalities for maximal functions and singular integrals},
  journal={Studia Math.},
  volume={51},
  date={1974},
  pages={241--250},
  issn={0039-3223},
  doi={10.4064/sm-51-3-241-250},
}

\bib{dAvenia_Squassina_2018}{article}{
   author={d'Avenia, Pietro},
   author={Squassina, Marco},
   title={Ground states for fractional magnetic operators},
   journal={ESAIM Control Optim. Calc. Var.},
   volume={24},
   date={2018},
   number={1},
   pages={1--24},
   issn={1292-8119},
   doi={10.1051/cocv/2016071},
}        

\bib{diBenedetto_2016}{book}{
   author={DiBenedetto, Emmanuele},
   title={Real analysis},
   series={Birkh\"{a}user Advanced Texts: Basler Lehrb\"{u}cher},
   edition={2},
   publisher={Birk\-h\"{a}user/Springer}, 
   address={New York},
   date={2016},
   pages={xxxii+596},
   isbn={978-1-4939-4003-5},
   isbn={978-1-4939-4005-9},
   doi={10.1007/978-1-4939-4005-9},
}

      \bib{Carmo}{book}{
   author={do Carmo, Manfredo P.},
   title={Riemannian geometry},
      series={Mathematics: Theory \& Applications},
      publisher={Birkh\"auser}, 
      address={Boston, Mass.},
   date={1992},
}

\bib{Esteban_Lions_1989}{article}{
   author={Esteban, Maria J.},
   author={Lions, Pierre-Louis},
   title={Stationary solutions of nonlinear Schr\"{o}dinger equations with an
   external magnetic field},
   conference={
      title={Partial differential equations and the calculus of variations,
      Vol. I},
   },
   book={
      series={Progr. Nonlinear Differential Equations Appl.},
      volume={1},
      publisher={Birkh\"{a}user}, 
      address={Boston, Mass.},
   },
   date={1989},
   pages={401--449},
}

\bib{Fiscella_Pinamonti_Vecchi_2017}{article}{
   author={Fiscella, Alessio},
   author={Pinamonti, Andrea},
   author={Vecchi, Eugenio},
   title={Multiplicity results for magnetic fractional problems},
   journal={J. Differential Equations},
   volume={263},
   date={2017},
   number={8},
   pages={4617--4633},
   issn={0022-0396},
   doi={10.1016/j.jde.2017.05.028},
}

\bib{Gagliardo_1957}{article}{
   author={Gagliardo, Emilio},
   title={Caratterizzazioni delle tracce sulla frontiera relative ad alcune
   classi di funzioni in \(n\) variabili},
   journal={Rend. Sem. Mat. Univ. Padova},
   volume={27},
   date={1957},
   pages={284--305},
   issn={0041-8994},
}

\bib{Gasiorowicz}{book}{
  title={Quantum Physics}, 
  edition={3},
  author={Gasiorowicz, Stephen},
  publisher={Wiley}, 
  year={2003},
  address={Hoboken, N.J.},
}


\bib{Ichinose_2013}{article}{
   author={Ichinose, Takashi},
   title={Magnetic relativistic Schr\"{o}dinger operators and imaginary-time
   path integrals},
   conference={
      title={Mathematical physics, spectral theory and stochastic analysis},
   },
   book={
      series={Oper. Theory Adv. Appl.},
      volume={232},
      publisher={Birkh\"{a}user/Springer},
      address={Basel},
   },
   date={2013},
   pages={247--297},
   doi={10.1007/978-3-0348-0591-9\_5},
}
                            

\bib{Kato_1972}{article}{
   author={Kato, Tosio},
   title={Schr\"{o}dinger operators with singular potentials},
   booktitle={Proceedings of the International Symposium on Partial
   Differential Equations and the Geometry of Normed Linear Spaces
   (Jerusalem, 1972)},
   journal={Israel J. Math.},
   volume={13},
   date={1972},
   pages={135--148 (1973)},
   issn={0021-2172},
   doi={10.1007/BF02760233},
}
               
\bib{Landau_Lifschitz_1977}{book}{
  author={Landau, L. D.},
  author={Liftschitz, E. M.},
  translator={Sykes, J. B.},
  translator={Bell, J. S.},
  title={Quantum mechanics: non-relativistic theory},
  series={Course of Theoretical Physics},
  volume={3},
  edition={3},
  date={1977},
  publisher={Pergamon Press},
  address={Oxford},
}
               
\bib{Liang_Repovs_Zhang_2018}{article}{
   author={Liang, Sihua},
   author={Repov\v{s}, Du\v{s}an},
   author={Zhang, Binlin},
    title={On the fractional Schr\"{o}dinger-Kirchhoff equations with
    electro\-ma\-gnetic fields and critical nonlinearity},
   journal={Comput. Math. Appl.},
   volume={75},
   date={2018},
   number={5},
   pages={1778--1794},
   issn={0898-1221},
   doi={10.1016/j.camwa.2017.11.033},
}   

\bib{Lieb_Loss_2001}{book}{
   author={Lieb, Elliott H.},
   author={Loss, Michael},
   title={Analysis},
   series={Graduate Studies in Mathematics},
   volume={14},
   edition={2},
   publisher={American Mathematical Society}, 
   address={Providence, R.I.},
   date={2001},
   pages={xxii+346},
   isbn={0-8218-2783-9},
   doi={10.1090/gsm/014},
}

\bib{Lions_Peetre_1964}{article}{
   author={Lions, J.-L.},
   author={Peetre, J.},
   title={Sur une classe d'espaces d'interpolation},
   journal={Inst. Hautes \'{E}tudes Sci. Publ. Math.},
   number={19},
   date={1964},
   pages={5--68},
   issn={0073-8301},
}

\bib{Miller_1982}{article}{
  author={Miller, Nicholas},
  title={Weighted Sobolev spaces and pseudodifferential operators with
    smooth symbols},
  journal={Trans. Amer. Math. Soc.},
  volume={269},
  date={1982},
  number={1},
  pages={91--109},
  issn={0002-9947},
  doi={10.2307/1998595},
}

\bib{Mironescu_Russ_2015}{article}{
   author={Mironescu, Petru},
   author={Russ, Emmanuel},
   title={Traces of weighted Sobolev spaces. Old and new},
   journal={Nonlinear Anal.},
   volume={119},
   date={2015},
   pages={354--381},
   issn={0362-546X},
   doi={10.1016/j.na.2014.10.027},
}


\bib{Muckenhoupt_1972}{article}{
  author={Muckenhoupt, Benjamin},
  title={Weighted norm inequalities for the Hardy maximal function},
  journal={Trans. Amer. Math. Soc.},
  volume={165},
  date={1972},
  pages={207--226},
  issn={0002-9947},
  doi={10.2307/1995882},
}

\bib{Muckenhoupt_Weeden_1978}{article}{
  author={Muckenhoupt, Benjamin},
  author={Wheeden, Richard L.},
  title={On the dual of weighted \(H^{1}\) of the half-space},
  journal={Studia Math.},
  volume={63},
  date={1978},
  number={1},
  pages={57--79},
  issn={0039-3223},
  doi={10.4064/sm-63-1-57-79},
}

\bib{Nguyen_Pinamonti_Squassina_2018}{article}{
   author={Nguyen, Hoai-Minh},
   author={Pinamonti, Andrea},
   author={Squassina, Marco},
   author={Vecchi, Eugenio},
   title={New characterizations of magnetic Sobolev spaces},
   journal={Adv. Nonlinear Anal.},
   volume={7},
   date={2018},
   number={2},
   pages={227--245},
   issn={2191-9496},
   doi={10.1515/anona-2017-0239},
}

\bib{Nguyen_Squassina_2019}{article}{
author={Nguyen, Hoai-Minh},
   author={Squassina, Marco},
   title={On anisotropic Sobolev spaces},
   journal={Commun. Contemp. Math.},
   volume={21},
   date={2019},
   number={1},
   pages={1850017, 13 pp},
}

\bib{Pinamonti_Squassina_Vecchi_2017}{article}{
   author={Pinamonti, Andrea},
   author={Squassina, Marco},
   author={Vecchi, Eugenio},
   title={The Maz'ya-Shaposhnikova limit in the magnetic setting},
   journal={J. Math. Anal. Appl.},
   volume={449},
   date={2017},
   number={2},
   pages={1152--1159},
   issn={0022-247X},
   doi={10.1016/j.jmaa.2016.12.065},
}

\bib{Raymond_2017}{book}{
   author={Raymond, Nicolas},
   title={Bound states of the magnetic Schr\"{o}dinger operator},
   series={EMS Tracts in Mathematics},
   volume={27},
   publisher={European Mathematical Society (EMS), Z\"{u}rich},
   date={2017},
   pages={xiv+380},
   isbn={978-3-03719-169-9},
   doi={10.4171/169},
}

\bib{Sandier_Serfaty_2007}{book}{
   author={Sandier, Etienne},
   author={Serfaty, Sylvia},
   title={Vortices in the magnetic Ginzburg--Landau model},
   series={Progress in Nonlinear Differential Equations and their
   Applications},
   volume={70},
   publisher={Birkh\"{a}user},
   address={Boston, Mass.},
   date={2007},
   pages={xii+322},
   isbn={978-0-8176-4316-4},
   isbn={0-8176-4316-8},
}

\bib{Sontz_2015}{book}{
   author={Sontz, Stephen Bruce},
   title={Principal bundles},
   series={Universitext},
   note={The classical case},
   publisher={Springer}, 
   address={Cham, Switzerland},
   date={2015},
   pages={xvi+280},
   isbn={978-3-319-14764-2},
   isbn={978-3-319-14765-9},
   doi={10.1007/978-3-319-15829-7},
}         
\bib{Squassina_Volzone_2016}{article}{
   author={Squassina, Marco},
   author={Volzone, Bruno},
   title={Bourgain--Br\'{e}zis--Mironescu formula for magnetic operators},
   journal={C. R. Math. Acad. Sci. Paris},
   volume={354},
   date={2016},
   number={8},
   pages={825--831},
   issn={1631-073X},
   doi={10.1016/j.crma.2016.04.013},
}

\bib{Triebel_1978}{book}{
   author={Triebel, Hans},
   title={Interpolation theory, function spaces, differential operators},
   series={North-Holland Mathematical Library},
   volume={18},
   publisher={North-Holland},
   address={Amsterdam--New York},
   date={1978},
   pages={528},
   isbn={0-7204-0710-9},
}

\bib{Turesson_2000}{book}{
  author={Turesson, Bengt Ove},
  title={Nonlinear potential theory and weighted Sobolev spaces},
  series={Lecture Notes in Mathematics},
  volume={1736},
  publisher={Springer-Verlag, Berlin},
  date={2000},
  pages={xiv+173},
  isbn={3-540-67588-4},
  doi={10.1007/BFb0103908},
}

\bib{Uspenskii_1961}{article}{
  author={Uspenski\u{\i}, S. V.},
  title={Imbedding theorems for classes with weights},
  language={Russian},
  journal={Trudy Mat. Inst. Steklov.},
  volume={60},
  date={1961},
  pages={282--303},
  issn={0371-9685},
  translation={
    journal={Am. Math. Soc. Transl.},
    volume={87},
    pages={121--145},
    date={1970},
  },
}

\bib{Wang_Xiang_2016}{article}{
   author={Wang, Fuliang},
   author={Xiang, Mingqi},
   title={Multiplicity of solutions to a nonlocal Choquard equation
   involving fractional magnetic operators and critical exponent},
   journal={Electron. J. Differential Equations},
   date={2016},
   pages={Paper No. 306, 11},
   issn={1072-6691},
}

\bib{Willem_2013}{book}{
   author={Willem, Michel},
   title={Functional analysis},
   series={Cornerstones},
   subtitle={Fundamentals and applications},
   publisher={Birkh\"{a}user/Springer, New York},
   date={2013},
   pages={xiv+213},
   isbn={978-1-4614-7003-8},
   isbn={978-1-4614-7004-5},
   doi={10.1007/978-1-4614-7004-5},
}

\end{biblist}

\end{bibdiv}

\end{document}